\newcommand{\m}[1]{\mathbb{#1}}
\newcommand{\gh}[1]{\mathfrak{#1}}
\newcommand{\q}[1]{\mathcal{#1}}
\newcommand{\ds}{\displaystyle}
\newcommand{\e}{\varepsilon}
\newcommand{\ve}{\varepsilon}
\newcommand{\bt}{\beta}
\newcommand{\tendf}{\rightharpoonup}
\providecommand{\abs}[1]{\left\lvert #1 \right\rvert}
\DeclareMathOperator{\Span}{\mathrm{Span}}
\DeclareMathOperator{\Id}{\mathrm{Id}}
\renewcommand{\le}{\leqslant}
\renewcommand{\ge}{\geqslant}
\theoremstyle{plain}
\newtheorem{thm}{Theorem}
\newtheorem*{thm*}{Theorem}
\newtheorem{prop}{Proposition}
\newtheorem{cor}{Corollary}
\newtheorem{lem}{Lemma}
\theoremstyle{definition}
\newtheorem{defi}{Definition}
\theoremstyle{remark}
\newtheorem*{claim}{Claim}
\newcommand{\R}{\mathbb{R}}
\newcommand{\N}{\mathbb{N}}
\newcommand{\la}{\lambda}
\newcommand{\al}{\alpha}
\newcommand{\ga}{\gamma}
\def\bm{\left( \begin{array}{cc}}
\def\endm{\end{array}\right)}
\newcommand{\be}{\begin{equation}}
\newcommand{\ee}{\end{equation}}
\newcommand{\ba}{\begin{equation*}}
\newcommand{\ea}{\begin{equation*}}
\newcommand{\bea}{\begin{eqnarray}}
\newcommand{\eea}{\end{eqnarray}}
\newcommand{\bee}{\begin{eqnarray*}}
\newcommand{\eee}{\end{eqnarray*}}
\newcommand{\ben}{\begin{enumerate}}
\newcommand{\een}{\end{enumerate}}
\newcommand{\nonu}{\nonumber}
\def\blfootnote{\xdef\@thefnmark{}\@footnotetext}
\title{Multi-solitons for nonlinear Klein-Gordon equations}
\date{}%\nodate
\author{Rapha\"el C\^ote and Claudio Mu\~noz}
\subjclass[2010]{35Q51,35L71,35Q40}
\keywords{Klein-Gordon equation, soliton, construction, instability, multi-soliton}
\thanks{The first author wishes to thank the University of Chicago for its hospitality during the academic year 2011-12, and acknowledges support from the European Research Council through the project BLOWDISOL}
\begin{document} 

\begin{abstract}
In this paper we consider the existence of multi-soliton structures for the nonlinear Klein-Gordon equation \eqref{NLKG} in $\R^{1+d}$. We prove that, independently of the unstable character of \eqref{NLKG} solitons, it is possible to construct a $N$-soliton family of solutions to \eqref{NLKG}, of dimension $2N$, globally well-defined in the energy space $H^1\times L^2$ for all large positive times. The method of proof involves the generalization of previous works on supercritical NLS and gKdV equations by Martel, Merle and the first author \cite{CMM} to the wave case, where we replace the unstable mode associated to the linear NLKG operator by two generalized directions that are controlled without appealing to modulation theory. As a byproduct, we generalize the linear theory described in Grillakis-Shatah-Strauss \cite{GSS1} and Duyckaerts-Merle \cite{DM2} to the case of boosted solitons, and provide new solutions to be studied using the recent Nakanishi-Schlag \cite{NS1} theory.  
\end{abstract}

\maketitle \markboth{NLKG multi-solitons} {Rapha\"el C\^ote and Claudio Mu\~noz}
\renewcommand{\sectionmark}[1]{}
%\tableofcontents

\allowdisplaybreaks

\section{Introduction}

\medskip

In this paper we are interested in the problem of constructing multi-soliton solutions for some well-known scalar field equations. Let $f=f(s)$  be a real-valued $\q C^1$-function. We consider the nonlinear Klein-Gordon equation (NLKG) in $\R^{1+d}$, $d\geq 1$,
\begin{equation}
\label{NLKG}\tag{NLKG}
\partial_{tt} u - \Delta u + u -f(u)  = 0, \quad u(t,x) \in \R, \quad (t,x)\in \R \times \R^d. 
\end{equation}
This equation arises in Quantum Field Physics as a model for a self-interacting, nonlinear scalar field, invariant under Lorentz transformations (see below). 

\medskip

Let $F$ be the standard integral of $f$:
\be \label{def:F}
\ds F(s) := \int_0^s f(\sigma)d\sigma.
\ee
We will assume that for some fixed constant $C>0$,
\ben
\item[(A)] If $d=1$,
\ben
\item[$(i)$] $f$ is odd, and $f(0) = f'(0) =0$, and
\item[$(ii)$] There exists $s_0>0$ such that $\ds F(s_0) - \frac 12 s_0^2 >0$.
\een
\item[(B)] If $d \ge 2$, $f$ is a pure power $H^1$-subcritical nonlinearity: $f(u) = \la |u|^{p-1} u$, where $\la>0$, $p\in (1,1+\frac 4{d-2})$.
\een

Prescribing $f$ to the above class of focusing nonlinearities ensures that the corresponding Cauchy problem for \eqref{NLKG} is locally well-posed in $H^s(\R^d) \times H^{s-1}(\R^d)$, for any $s\ge 1$: we refer to Ginibre-Velo \cite{GV85} and Nakamura-Ozawa \cite{NO01} (when $d=2$) for more details.  

%Within this class, we can find e.g. 
%\begin{itemize}
%\item the case of pure power $f(u) = \la |u|^{p-1} u$, $\la>0$, $p\in (1,\frac 4{d-2})$ if $d\geq 3$, and $1\le p<\infty$ if $d=1,2$;
%\item  the nonlinearity $f(u) = u (e^{u^2}-1)$ if $d = 2$.
%\end{itemize}

\medskip

Also under the above conditions, the Energy and Momentum (every integral is taken over $\R^d$)
\begin{align}
E[u,u_t] (t) & =  \frac{1}{2}  \int \left[ |\partial_t u(t,x)|^2+ | \nabla u(t,x)|^2 + |u(t,x)|^2 - 2 F(u(t,x))  \right] dx, \label{E} \\
P[u,u_t] (t) & =  \frac{1}{2} \int \partial_t u(t,x)  \nabla u(t,x) \ dx,\label{P}
\end{align}
are conserved along the flow.

\medskip

Another important feature of equation \eqref{NLKG}, still under the previous conditions, is the fact that it admits stationary solutions of the form $u(t,x) = U(x)$ (i.e., with no dependence on $t$). Among them, we are interested in the ground-state $Q=Q(x)$, where $Q$ is a positive solution of the elliptic PDE
\be\label{Q}
\Delta Q - Q  + f(Q) = 0, \quad Q>0, \quad Q\in H^1(\R^d).
\ee
The existence of this solution goes back to Berestycki-Lions \cite{BL},  provided the above conditions (in particular $(ii)$)  hold. Additionally, it is well-known that $Q$ is radial and exponentially decreasing, along with its first and second derivatives (Gidas-Ni-Nirenberg \cite{GNN}), and \emph{unique} up to definition of the origin (see Kwong \cite{K}, Serrin and Tang \cite{ST}). 

\medskip

In fact, our main result written below could be extended to more general nonlinearity under an additional assumption of spectral nature, namely that the linearized operator around $Q$ has a standard simple spectrum. More precisely, Theorem \ref{MT1} holds, as soon as $f$ satisfies $(i)$, $(ii)$ and:
\ben
\item[$(iii)$] If $d=2$, $|f'(s)| \le C |s|^p e^{\kappa s^2}$, for some $p \ge 0$, $\kappa >0$ and all $s \in \m R$.
\item[$(iv)$] If $d \ge 3$, $|f'(s)| \le C (1+|s|^{p-1})$ for some $\ds p < 1+ \frac{4}{d-2}$ and all $s \in \m R$.
\item[$(v)$] $-\Delta z + z  - f'(Q)z $ has a unique simple negative eigenvalue, and its kernel is given by $\{ x \cdot \nabla Q | x \in \m R^d \}$ and it is nondegenerate.
\een
Assumption $(v)$ has been checked in the cases (A) and (B) (using ODE analysis), and is believed to hold for a wide class of functions $f$. (See Lemma \ref{lemLd}.)

\medskip

Since \eqref{NLKG} is invariant under \emph{Lorentz boosts}, we can define a \emph{boosted ground state} (a \emph{soliton} from now on) with relative velocity $\beta \in \m R^d$.  More precisely, let $\beta = (\beta_1, \dots, \beta_d) \in \R^d$, with $|\beta|<1$ (we denote $|\cdot|$ the euclidian norm on $\m R^d$), the corresponding Lorentz boost is given by the $(d+1) \times (d+1)$ matrix 
\begin{equation}
\label{Lor}
\Lambda_\bt := \begin{pmatrix}
\gamma & -\beta_1 \gamma & \cdots & \beta_d \gamma \\
- \beta_1 \gamma &  \\
\vdots  & & \ds \Id_d + \frac{(\gamma-1)}{|\beta|^2} \beta \beta^T \\
-\beta_d \gamma
\end{pmatrix} \quad \text{where} \quad \ga := \frac{1}{\sqrt{1-|\bt|^2}},
\end{equation}
($\beta \beta^T$ is the $d \times d$ rank 1 matrix with coefficient of index $(i,j)$ $\beta_i \beta_j$).
 Then the boosted soliton with velocity $\bt$ is
\begin{equation}
\label{Qb}
Q_\bt(t,x) := Q \left( \Lambda_\beta \begin{pmatrix} t \\ x \end{pmatrix} \right),
\end{equation}
where with a slight abuse of notation we say that $Q(t,x) = Q(x)$ (namely we project on the last $d$ coordinates). Also notice that \eqref{NLKG} is invariant by space translation (shifts).  
Hence the general family of solitons is parametrized by speed $\beta \in \m R^d$ and shift (translation) $x_0 \in \m R^d$:
\[ (t,x) \mapsto Q_{\beta}(t,x-x_0). \]
This family is the orbit of $\{ Q \}$ under all the symmetries of \eqref{NLKG} (general Lorentz transformation, time and space shifts), in particular it is invariant under these transformations: see the Appendix \ref{Ap} for further details.

\medskip

In the rest of this work, it will be convenient to work with vector data $(u, \partial_t u)^{T}$. For notational purposes, we use upper-case letters to denote vector valued functions and lower-case letters for scalar functions (except for the scalar field $Q_\beta$).

\medskip

We will work in the energy space $H^1(\R^d) \times L^2(\R^d)$ endowed with the following scalar product: denote $U =
(u_1,u_2)^T$, $V = (v_1,v_2)^T$, we define
\begin{align}
\label{nor}
\langle U | V \rangle = \left\langle \begin{pmatrix} u_1 \\ u_2 \end{pmatrix} \middle| \begin{pmatrix} v_1 \\ v_2 \end{pmatrix} \right\rangle := ( u_1 | v_1 ) + (u_2 | v_2 ) = \int (u_1 v_1 + u_2 v_2), \\
\text{where} \quad (u|v) := \int u v, \nonumber
\end{align}
and the energy norm
\be\label{Nor}
\|U\|^2 := \langle U | U \rangle + (\nabla u_1| \nabla u_1) =  \| u_1 \|_{H^1}^2 + \| u_2 \|_{L^2}^2.
\ee
It is well known (see e.g. Grillakis-Shatah-Strauss \cite{GSS1}) that $(Q,0)$ is unstable\footnote{This result was known in the physics
  literature as the Derrick's Theorem \cite{D}.} in the energy space.
The instability properties of $Q$ and solution with energy slightly above $E[(Q,0)]$ have recently been further studied by Nakanishi and Schlag, see \cite{NS1} and subsequent works. Their ideas are further developments of the primary idea introduced in Duyckaerts-Merle \cite{DM1}, in the context of the energy critical nonlinear wave equation (where the relevant nonlinear object is the stationary function $W$ which solves $\Delta W+W^{1+4/(d-2)}=0$).

\medskip

In this paper, we want to understand the dynamics of large, quantized energy solutions. More precisely, we address the question whether is it possible to construct a multi-soliton solution for (\ref{NLKG}),
i.e. a solution $u$ to \eqref{NLKG} defined on a semi-infinite interval of time, such that
\[ u(t,x) \sim \sum_{j=1}^N Q_{\beta_j}(t,x-x_j) \quad \text{as} \quad t \to + \infty. \]
Such solutions were constructed for the nonlinear Schrödinger equation and the generalized Korteweg-de Vries equation, first in the $L^2$-critical and subcritical case by Merle \cite{Merle1}, Martel \cite{Martel1} and Martel-Merle \cite{MM06}. These results followed from the stability and asymptotic stability theory that these authors developed.

\medskip

The existence of multi-solitons was then extended by Martel-Merle and the first author \cite{CMM} to the $L^2$ supercritical case: in this latter case, each single soliton is unstable, hence the multi-soliton is a highly unstable solution. It turns out that this is also the case for scalar field equations as \eqref{NLKG}. We prove that, regardless of the instability of the soliton, one can construct large mass multi-solitons, on the whole range of parameters $\beta_1, \dots, \beta_N \in \m R^d$ distinct, with $|\beta_j|<1$ and $x_1, \dots, x_N \in \m R^d$. More precisely, the main result of this paper is the following.

\begin{thm}\label{MT1}
Assume (A) or (B), and let $\bt_1, \bt_2 ,\ldots ,\bt_N \in \m R^d$ be a set of different velocities
\[ \forall i \ne j, \quad \beta_i \ne \beta_j, \quad \text{and} \quad |\beta_j| <1, \]
 and $x_1, x_2, \ldots, x_N \in \m R^d$ shift parameters.  

Then there exist a time $T_0 \in \R$, constants $C>0$, and $\ga_0>0$, only depending on the sets $(\beta_j)_j$, $(x_j)_j$, and a solution $(u, \partial_t u) \in \q C([T_0,+\infty), H^1(\R^d)\times L^2(\R^d))$ of (\ref{NLKG}), globally defined for forward times and satisfying
\[
\forall t \ge T_0, \quad  \Big\| (u,\partial_t u)(t,x) - \sum_{j=1}^N (Q_{\bt_j} , \partial_t Q_{\bt_j})(t,x-x_j)\Big\| \le C e^{-\ga_0 t}.
\]
\end{thm}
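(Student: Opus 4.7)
The plan is to adapt the backward-propagation scheme developed for supercritical problems in \cite{CMM}. For a sequence of times $T_n \to +\infty$, I will construct solutions $U_n = (u_n, \partial_t u_n)$ of \eqref{NLKG} on $[T_0, T_n]$ whose value at $T_n$ is the target profile $R(t,x) := \sum_{j=1}^N (Q_{\bt_j}, \partial_t Q_{\bt_j})(t, x - x_j)$ plus a small correction chosen in a finite-dimensional unstable subspace. The multi-soliton is then obtained as a weak limit of $(U_n)$ in $H^1 \times L^2$ as $n \to \infty$, and the pointwise-in-time exponential bound survives the passage to the limit by lower semicontinuity.

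\textbf{Linear theory for boosted solitons.} Rewrite \eqref{NLKG} in first-order Hamiltonian form $\partial_t U = \q J \, E'[U]$ on $H^1 \times L^2$. Linearizing around a single stationary ground state $(Q,0)$, assumption $(v)$ together with the Grillakis-Shatah-Strauss analysis produces a simple pair of real eigenvalues $\pm e_0$ of the linearized Hamiltonian, with eigenvectors $Y_0^\pm$, plus a generalized kernel coming from space translations and Lorentz boosts. Pulling back these objects through the Lorentz boost $\Lambda_{\bt_j}$ produces, for each $j$, a pair of time-dependent exponential modes $Y_j^\pm(t, x-x_j)$ and dual modes $Z_j^\pm(t, x-x_j)$, together with a coercivity identity on the orthogonal of these unstable and neutral directions. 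This extension of the classical linear theory to boosted ground states is the spectral input that replaces classical modulation arguments.

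\textbf{Uniform estimates and topological argument.} Write $U_n = R + \eta_n$ and prescribe the final data $\eta_n(T_n) \in \Span\{Y_j^+(T_n,\cdot-x_j) : 1 \le j \le N\}$, so there are $N$ free parameters. Define the scalar projections $a_j^\pm(t) := \langle \eta_n(t) | Z_j^\pm(t,\cdot-x_j) \rangle$. Since the velocities $(\bt_j)$ are pairwise distinct, the solitons separate linearly in time, so interaction terms between distinct solitons are $O(e^{-\ga_0 t})$ for some $\ga_0 > 0$ determined by the minimal separation rate. The a priori estimates I intend to establish are
\begin{align*}
\bigl| \tfrac{d}{dt} a_j^\pm(t) \mp e_j a_j^\pm(t) \bigr| & \le C\bigl(e^{-\ga_0 t} + \|\eta_n(t)\|\bigr) \|\eta_n(t)\|, \\
\tfrac{d}{dt} \q F_n(t) & \le C\bigl(e^{-\ga_0 t} + \|\eta_n(t)\|\bigr) \|\eta_n(t)\|^2,
\end{align*}
where $\q F_n$ is a quadratic functional assembled from the linearized energies and momenta around each boosted soliton, satisfying the coercivity
\[
\|\eta_n(t)\|^2 \le C \q F_n(t) + C \sum_{j=1}^N \bigl(|a_j^+(t)|^2 + |a_j^-(t)|^2\bigr).
\]
A bootstrap of $\|\eta_n(t)\|^2 \le K e^{-2\ga_0 t}$ on $[T_0,T_n]$ then closes provided one controls $|a_j^\pm(t)| \lesssim e^{-\ga_0 t}$. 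The stable components $a_j^-$ are absorbed by integrating backward from $T_n$; the $N$ unstable components $a_j^+$ at $t = T_0$ are forced to vanish by a Brouwer-type (degree) argument on the $N$ free parameters at $T_n$, in the spirit of \cite{CMM} but here adapted to the wave setting where the unstable spectrum comes in pairs.

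\textbf{Main difficulty and conclusion.} Once the uniform bound holds on $[T_0,T_n]$, standard weak compactness in $H^1 \times L^2$ plus continuous dependence on initial data for \eqref{NLKG} yields the limiting multi-soliton $U_\infty$. The most delicate step will be the construction and positivity of $\q F_n$ around the boosted sum $R$: one must patch together several Lorentz-transported Grillakis-Shatah-Strauss bilinear forms whose supports overlap only on exponentially small regions, propagate this positivity under the time-dependent, non-self-adjoint linearized flow, and ensure that the dimension of the finite-dimensional obstruction matches exactly the $N$ parameters available in the topological step. Avoiding modulation, by working with explicit fixed-in-shape time-dependent directions $Z_j^\pm$, is what keeps the Brouwer argument clean but forces a careful treatment of the null modes inside the coercivity estimate; this is expected to be the technical heart of the proof.
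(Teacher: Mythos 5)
Your proposal follows essentially the same architecture as the paper: backward integration from $T_n\to\infty$, spectral modes associated with the exponentially (un)stable directions of the linearized flow around each boosted soliton, a localized momentum-corrected energy functional that is coercive modulo those directions and the kernel, a Brouwer/degree argument on the $N$ free unstable parameters at the final time, and a weak $H^1\times L^2$ compactness limit at the end. The estimates you list for $a_j^\pm$ and for the Lyapunov functional are exactly the ones established in the paper (Lemma 6 and the monotonicity corollary).

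One point to sharpen, because it verges on a gap as written. You assert that the scheme avoids modulation by ``working with explicit fixed-in-shape time-dependent directions $Z_j^\pm$''. What the paper actually avoids is modulating against the unstable mode: that projection is handled by the topological argument rather than by an orthogonality condition. But the paper does modulate the translation parameters $\tilde y_j$ (Lemma 3), precisely because the coercivity statement (Proposition 2) requires $\langle V \mid \Phi_{0,j}\rangle = 0$, i.e.\ orthogonality to the kernel of the Hessian $H$ localized near each soliton. If you do not modulate at all and keep the shifts frozen at $x_j + \beta_j t$, the kernel component of $V$ is not seen by the localized quadratic form, can grow linearly under the linearized flow, and cannot be absorbed by the Brouwer step (which only has $N$ parameters, matched to the unstable modes, not $2N$). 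You flag ``careful treatment of the null modes'' as the technical heart, but you do not propose a mechanism; the mechanism in the paper is exactly modulation of $\tilde y_j$ (giving orthogonality to $\Phi_{0,j}$, plus a separate a priori bound on $a_{0,j}$ in the shrinking set). If you intend to genuinely dispense with modulation, you must either enlarge the final-data span to include the null modes (and hence enlarge the Brouwer ball to dimension $2N$ and re-derive a transversality estimate for the $a_{0,j}$, which is delicate since they have zero, not positive, eigenvalue) or derive an alternative secular-growth bound for the kernel component; neither is automatic.

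A smaller remark: you place the final-data correction in $\Span\{Y_j^+\}$, whereas the paper works with $\Span\{Z_{\pm,j}\}$ and then solves for coefficients so that $\boldsymbol a_-=\boldsymbol a_0 = 0$ at $t=T_n$. Your choice is workable in principle (the $a_j^-$ decay backward in time regardless of their initial size), but it slightly obscures why the final value of $\boldsymbol a_-$ is controlled, whereas setting it to zero keeps the shrinking-set estimates clean. This is a stylistic difference, not an error.
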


We remark that this is the first multi-soliton result for wave-type equations. 
%In that sense, the proof in the \eqref{NLKG} setting requires several new ideas when compared with earlier literature. 
Although the nonlinear object under consideration
is the same as for (NLS) for example, the structure of the flow is
different (recall that all solitons are unstable for \eqref{NLKG},
irrespective of the nonlinearity). Hence we need to work in a more general
framework, the one given by a matrix description of \eqref{NLKG}. 

\medskip

Let us describe the main steps of the proof. We first revisit the standard spectral theory of linearized operators around the soliton, and the second order derivative of the energy-momentum functional (see $H$ in \eqref{H})  \cite{GSS1}. Since solitons are unstable objects, it is clear that such a theory will not be enough to describe the dynamics of several solitons. However, a slight variation of this functional (see $\mathcal H$ in \eqref{qH}) turns out to be the key element to study. We describe its spectrum in great detail, in particular we prove that this operator has three eigenvalues: the kernel zero, and two opposite sign eigenvalues, with associated eigenfunctions $Z_\pm$. After some work we are able to prove a coercivity property for the operator $\mathcal H$ modulo the two directions $Z_+$ and $Z_-$. This analysis was first conducted by Pego and Weinstein \cite{PW} in the context of generalized KdV equations. 

\medskip

The rest of the work is devoted to the study of the dynamics of small perturbations of the sum of $N$ solitons, in particular how the two directions associated to $Z_\pm$ evolve. Using a topological argument, we can show the existence of suitable initial data for (NLKG) such that both directions remain controlled for all large positive time, proving the main theorem.   We remark that this method is general and does not require the study of the linear evolution at large, but also a deep understanding of suitable alternative directions of the linearized operator. A nice open question should be the extension of this result to the nonlinear wave case, where the soliton decays polynomially.

\medskip

For the sake of easiness and clarity, we present the detailed computations in the one dimensional case $d=1$. This case encompass all difficulties, the higher dimension case adding only indices and notational inconvenience: we will briefly describe the corresponding differences at the end of each section.

\subsection*{Organization of this paper} In Section 2, we develop spectral aspects of the linearized flow around $Q_\beta$, which are more subtle than in the (NLS) or (gKdV) case. In Section 3, we construct approximate $N$-soliton solutions in Proposition \ref{prop:app_Nsol}, which we do by estimation backward in time as in \cite{Merle1,Martel1,MM06}. There we present the nonlinear argument, relying \emph{in fine} on a topological argument as in \cite{CMM}. The Lyapunov functional has to be chosen carefully, as we cannot allow mixed derivatives of the form $\partial_{tx} u$.
Finally in Section \ref{sec:4}, we prove Theorem \ref{MT1}, relying on the previously proved Proposition \ref{prop:app_Nsol} and a compactness procedure.

\begin{center}
\bfseries Acknowlegment
\end{center}

We would like to thank Wilhelm Schlag for pointing us this problem and for enlightening discussions.
We are deeply indebted to the anonymous referee, who we thank for his thorough reading and comments which improved the manuscript significantly.

\bigskip

\section{Spectral theory}

\medskip

In this section we describe and solve two spectral problems related to (\ref{NLKG}). We will work with functions independent of time, unless specified explicitly. The main result of this section is Proposition \ref{prop:22}.

\subsection{Coercivity of the Hessian}

First of all, we recall the structure of the Hessian of the energy around $Q$. Given $Q=Q(x)$ ground state of (\ref{Q}) and $Q_\bt(x) = Q(\ga x)$, where  $\ga = (1-\bt^2)^{-1/2}$, we define the operators
\be\label{Lp}
L^+ := -\partial_{xx} + \Id - f'(Q), \quad \text{and} \quad L_\beta^+ = -\ga^{-2} \partial_{xx} + \Id - f'(Q_\beta),
\ee
so that $L_\beta^+$ is a rescaled version of $L_+$: 
\[ L_{\beta}^+ \left( v \left( \ga x %\frac{x}{\sqrt{1-\beta^2}} 
\right) \right) = (L^+ v) \left( \ga x %\frac{x}{\sqrt{1-\beta^2}} 
\right). \]
As a consequence of the Sturm-Liouville theory and the previous identity, we have the following spectral properties for $L^+$, and therefore for $L^+_\beta$.

\begin{lem} \label{lemL}
The unbounded operator $L^+$, defined in $L^2(\R)$ with domain $H^2(\R)$, is self-adjoint, has a unique negative eigenvalue $-\la_0<0$ (with corresponding $L^2$-normalized eigenfunction $Q^-$) and its kernel is spanned by $\partial_x Q$. Moreover, the continuous spectrum is $[1, +\infty)$, and $0$ is an isolated eigenvalue.
\end{lem}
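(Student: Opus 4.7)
The plan is to treat $L^+$ as a one-dimensional Schrödinger operator of the form $-\partial_{xx}+1+V$ with a short-range perturbation $V(x)=-f'(Q(x))$, and to exploit classical Sturm--Liouville oscillation theory, which is perfectly adapted to the 1D setting.

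First I would settle the soft part. Since $Q$ is smooth, bounded and decays exponentially (along with its derivatives, by Gidas--Ni--Nirenberg) and $f'$ is continuous with $f'(0)=0$, the potential $V(x)=-f'(Q(x))$ is bounded and exponentially decaying. Multiplication by $V$ is therefore relatively compact with respect to $-\partial_{xx}+1$ on $H^2(\R)$, so $L^+$ is self-adjoint on $H^2(\R)$ by Kato--Rellich, and Weyl's theorem yields
\[
\sigma_{\mathrm{ess}}(L^+)=\sigma_{\mathrm{ess}}(-\partial_{xx}+1)=[1,+\infty).
\]
Consequently any spectrum in $(-\infty,1)$ consists of isolated eigenvalues of finite multiplicity, so once $0$ is identified as an eigenvalue it is automatically isolated.

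Next I would identify the kernel and use oscillation theory. Differentiating the ground-state equation $-\partial_{xx}Q+Q-f(Q)=0$ in $x$ gives
\[
L^+(\partial_x Q)=-\partial_{xxx}Q+\partial_x Q-f'(Q)\partial_x Q=0,
\]
so $\partial_x Q\in\ker L^+$. Because $Q$ is even, positive, and strictly decreasing on $[0,+\infty)$ (by standard ODE analysis of $Q''=Q-f(Q)$ behind \cite{BL,ST}), the function $\partial_x Q$ has exactly one zero, at the origin. Sturm oscillation theory then says that eigenvalues of $L^+$ below the essential spectrum are simple and can be listed in increasing order $\mu_0<\mu_1<\dots<1$, the $k$-th eigenfunction having exactly $k$ zeros. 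Hence $\partial_x Q$ is the eigenfunction of $\mu_1=0$, the kernel is one-dimensional, and any other eigenvalue below $1$ must be strictly less than $0$, simple, and have a nowhere vanishing eigenfunction; in particular there is \emph{at most} one negative eigenvalue.

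Finally I would produce the negative eigenvalue by the variational (min-max) characterization of $\mu_0$: it suffices to exhibit $\varphi\in H^1(\R)$ with $\langle L^+\varphi,\varphi\rangle<0$, for then $\mu_0=\inf_{\|\varphi\|_{L^2}=1}\langle L^+\varphi,\varphi\rangle$ is negative and, lying strictly below the essential spectrum, is attained and is an eigenvalue. The natural test function is $\varphi=Q$: multiplying the ground-state equation by $Q$ and integrating gives $\int((\partial_x Q)^2+Q^2)=\int f(Q)Q$, hence
\[
\langle L^+Q,Q\rangle=\int\bigl(f(Q)Q-f'(Q)Q^2\bigr)dx.
\]
For a pure power $f(u)=\lambda|u|^{p-1}u$ with $p>1$ this equals $(1-p)\int f(Q)Q<0$. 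In the general 1D case (A), one combines the above identity with the 1D Pohozaev relation $(\partial_x Q)^2=Q^2-2F(Q)$, which is nontrivial precisely thanks to assumption (A)(ii). This is the only step where the structural hypotheses on $f$ really enter, and is the main (though mild) obstacle. The resulting eigenvalue is denoted $-\lambda_0<0$ and its $L^2$-normalized eigenfunction $Q^-$, completing the description of $\sigma(L^+)$.
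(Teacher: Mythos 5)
Your proof follows the same Sturm--Liouville/oscillation-theory route that the paper itself invokes but does not spell out: the text simply says ``as a consequence of the Sturm--Liouville theory and the previous identity, we have the following spectral properties,'' so you are in effect filling in what the authors take as standard, and the outline you give (Kato--Rellich and Weyl for self-adjointness and $\sigma_{\mathrm{ess}}=[1,\infty)$, differentiation of the ground-state equation for the kernel, oscillation theory for simplicity and counting) is the right one.

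One remark on internal logic. Your oscillation argument already \emph{produces} the negative eigenvalue, so the final variational paragraph is redundant: once you have identified $\partial_x Q$ as an $L^2$ eigenfunction with exactly one zero and eigenvalue $0 < 1 = \inf\sigma_{\mathrm{ess}}(L^+)$, the oscillation theorem forces $0$ to be the \emph{second} discrete eigenvalue, hence the first, $\mu_0<0$, must exist. (Equivalently, the nodal-domain version: the two restrictions of $\partial_x Q$ to $(-\infty,0)$ and $(0,\infty)$ give an orthogonal pair of test functions on which the quadratic form vanishes, so by min--max there are at least two eigenvalues $\le 0$, and simplicity then gives one strictly below $0$.) The redundancy matters because your final paragraph is also the only incomplete step: for a general $f$ under assumption (A), the claim that combining $\langle L^+Q,Q\rangle = \int\bigl(f(Q)Q - f'(Q)Q^2\bigr)$ with the first-order relation $(\partial_x Q)^2 = Q^2 - 2F(Q)$ yields a negative sign is left unjustified and is not obvious. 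If you simply drop that paragraph and invoke the nodal/oscillation reasoning for existence, the proof closes cleanly with no gap.
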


We recall that from  standard elliptic theory, $Q^-$ is smooth, even and exponentially decreasing in space: there exists $c_0 >0$ such that
\be\label{Qminus}
\forall \ k \in \N, \ \forall \ x \in \R, \ \exists \ C_k, \quad |\partial_x^k Q^-(x)|  \le C_k e^{-c_0|x|}. 
\ee
It is not difficult to check that one can take any $c_0$ satisfying $0<c_0 \le \sqrt{1+\la_0}$.

\medskip

Another consequence of Lemma \ref{lemL} is the following fact: $L^+_\beta$ has a unique negative eigenvalue $- \la_0$ with (even) eigenfunction $Q^-_\beta(x) : = Q^-( \ga x)$, its kernel is spanned by $\partial_x Q_\beta$ and has continuous spectrum $[1,+\infty)$. Additionally, we have

\begin{cor} \label{coerL}  
There exists $\nu_0 \in (0,1)$ such that, if  $v\in H^1(\R)$  satisfies $(v| Q^-_\beta) = (v|\partial_x Q_\beta) =0$, then  $( L^+_\beta v | v ) \ge  \nu_0\| v \|^2_{H^1}$.
\end{cor}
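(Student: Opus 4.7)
The plan is to first establish coercivity for the unboosted operator $L^+$ (on $L^2$, then upgrade to $H^1$), and then transfer the estimate to $L^+_\beta$ via the Lorentz rescaling $v(x) = w(\gamma x)$.

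\textbf{Step 1 (Coercivity of $L^+$ on $L^2$).} By Lemma \ref{lemL}, $L^+$ is self-adjoint with purely discrete spectrum below $1$ consisting of exactly two eigenvalues, $-\lambda_0 < 0$ (eigenspace $\R Q^-$) and $0$ (eigenspace $\R \partial_x Q$), and essential spectrum $[1,+\infty)$. Hence on $\{Q^-,\partial_x Q\}^\perp$ the spectrum of $L^+$ is bounded below by some $\mu_0 > 0$ (the smallest positive eigenvalue if it exists, or $1$ otherwise); by the spectral theorem,
\[
(L^+ w \mid w) \ge \mu_0 \|w\|_{L^2}^2 \qquad \text{whenever } (w\mid Q^-) = (w \mid \partial_x Q)=0.
\]

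\textbf{Step 2 (Upgrade to $H^1$).} Writing $(L^+w\mid w) = \|w'\|_{L^2}^2 + \|w\|_{L^2}^2 - (f'(Q)w \mid w)$ and using that $f'(Q) \in L^\infty(\R)$ (since $Q \in L^\infty$ and $f \in \mathcal{C}^1$), one gets $\|w'\|_{L^2}^2 \le (L^+w \mid w) + M \|w\|_{L^2}^2$ for some $M>0$. Combining with Step 1,
\[
\|w\|_{H^1}^2 = \|w\|_{L^2}^2 + \|w'\|_{L^2}^2 \le \Bigl(1 + \frac{M+1}{\mu_0}\Bigr) (L^+ w\mid w),
\]
so there exists $\nu_0^\ast \in (0,1)$ with $(L^+w \mid w) \ge \nu_0^\ast \|w\|_{H^1}^2$ under the two orthogonality conditions.

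\textbf{Step 3 (Transfer by scaling).} Given $v \in H^1(\R)$ with $(v \mid Q_\beta^-) = (v \mid \partial_x Q_\beta) = 0$, set $w(y) := v(y/\gamma)$. From $L^+_\beta v(x) = (L^+ w)(\gamma x)$ one checks
\[
(L^+_\beta v \mid v) = \gamma^{-1} (L^+ w \mid w),
\]
and a change of variables gives the orthogonality $(w \mid Q^-) = (w \mid \partial_x Q) = 0$, while
\[
\|w\|_{L^2}^2 = \gamma \|v\|_{L^2}^2, \qquad \|w'\|_{L^2}^2 = \gamma^{-1}\|v'\|_{L^2}^2.
\]
Applying Step 2 to $w$ and using $\gamma \ge 1$ yields $(L^+_\beta v \mid v) \ge \nu_0^\ast \gamma^{-2} \|v\|_{H^1}^2$, so one may take $\nu_0 := \nu_0^\ast(1-\beta^2) \in (0,1)$.

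\textbf{Main obstacle.} None of the steps is genuinely delicate; the only subtle point is making sure the $L^2$ coercivity can be promoted to $H^1$ without losing control of the orthogonality directions, which is why the trivial interpolation via the $f'(Q)w$ bound in Step 2 is the right way to proceed rather than, say, trying to bound $\|w'\|_{L^2}^2$ directly. The rest is bookkeeping of the Lorentz factor $\gamma$.
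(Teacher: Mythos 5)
Your proof is correct, and since the paper states Corollary \ref{coerL} as an immediate consequence of Lemma \ref{lemL} without giving a proof, there is nothing more explicit to compare against; your three steps ($L^2$ spectral-gap coercivity on $\{Q^-,\partial_x Q\}^\perp$, upgrade to $H^1$ via boundedness of $f'(Q)$, transfer to $L^+_\beta$ by the Lorentz rescaling) are exactly the standard argument the authors leave implicit. One minor overreach: in Step 1 you assert the spectrum of $L^+$ below $1$ consists of \emph{exactly} two eigenvalues, which Lemma \ref{lemL} does not claim (it says nothing about eigenvalues in $(0,1)$); fortunately your argument does not need this, since the isolatedness of $0$ already gives the positive lower bound $\mu_0$ on $\{Q^-,\partial_x Q\}^\perp$.
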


We introduce now suitable matrix operators associated to the dynamics around a soliton. These operators will be dependent on the velocity parameter $\beta$, but for simplicity of notation, we will omit the subscript $\beta$ when there is no ambiguity. Define\footnote{Do not confuse with the transpose symbol $(\cdot )^T$.}
\be\label{T}
T=T_\beta := - \partial_{xx} + \Id - f'(Q_\bt) = L^+_\beta - \beta^2 \partial_{xx}, %T=T_\beta := - \Delta + \Id - f'(Q_\bt) = L^+_\beta - \beta^2 \Delta, 
\ee
\be\label{JJ}
J := \begin{pmatrix}
0 & 1 \\
-1 & 0
\end{pmatrix}, 
\ee
\be\label{LL}
L := \begin{pmatrix}
T & 0 \\
0 & \Id
\end{pmatrix}, 
\ee
and
\begin{align} \label{H}
H &: = L -  J \begin{pmatrix}
 \beta \partial_{x} & 0  \\
0 & \beta \partial_{x}
\end{pmatrix} =  \begin{pmatrix}
T & -\beta \partial_{x} \\
\beta \partial_{x} & \Id
\end{pmatrix}.
\end{align}
The operator $H$ is the standard second order derivative of the functional for which the vector soliton $R = (Q_{\bt} , \partial_t Q_{\bt})^T$ is an associated local minimizer. Later we will discuss in detail this assertion. The following Proposition describes the main spectral properties of $H$. Recall that $\langle \cdot | \cdot \rangle$ and $( \cdot | \cdot )$ denote the symmetric bilinear forms on $H^1(\R) \times L^2(\R)$ and $L^2(\R) $ respectively, introduced in (\ref{nor}), and $\|\cdot\|$ is the energy norm defined in \eqref{Nor}.

\begin{prop}\label{PropH}
Let $\bt\in \R$, $|\bt|<1$. The matrix operator $H$, defined in $L^2(\R)\times L^2(\R)$ with dense domain $H^2(\R)\times H^1(\R)$, is a self-adjoint operator. Furthermore, there exist $\alpha_0 >0$, $\Phi_0$, $\Phi_{-} \in  \q S(\m R)^2$ (with exponential decay, along with their derivatives) such that
\be\label{10}
 H\Phi_0 = 0, \quad \langle \Phi_0 | \Phi_- \rangle =0,  
\ee
\be\label{11}
 \langle H \Phi_{-} | \Phi_{-} \rangle < 0,
\ee
and the following coercivity property holds. Let $V = (v_1,v_2)^{T} \in H^1(\R)\times L^2(\R)$. Then,
\be \label{coer1}
\text{if} \quad  \langle V|\Phi_{0} \rangle= \langle V| H \Phi_- \rangle =0 \quad \text{one has} \quad  \langle HV |V\rangle \geq \al_0 \|V\|^2.
\ee
\end{prop}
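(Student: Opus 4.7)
\textbf{Plan of proof for Proposition \ref{PropH}.}

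Self-adjointness is routine: $T$ and $\Id$ on the diagonal are self-adjoint, and since $\partial_x$ is skew-adjoint on $L^2(\R)$, the off-diagonal block $\begin{pmatrix}0 & -\beta\partial_x\\\beta\partial_x & 0\end{pmatrix}$ is symmetric as a block operator. Standard perturbation of the diagonal self-adjoint operator (whose domain is $H^2\times H^1$) by a relatively bounded symmetric perturbation gives self-adjointness. The first key step is to derive the algebraic identity obtained by completing the square in $v_2$:
\be\label{complsq}
\langle HV|V\rangle = \| v_2+\beta\partial_x v_1\|_{L^2}^2 + (L^+_\beta v_1|v_1),
\ee
using $T=L^+_\beta-\beta^2\partial_{xx}$. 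This single formula drives everything that follows: it expresses the quadratic form as a non-negative piece plus a well-understood scalar form for which Lemma \ref{lemL} and Corollary \ref{coerL} apply.

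For the kernel direction, the translation invariance of \eqref{NLKG} suggests
\[ \Phi_0:=\begin{pmatrix}\partial_x Q_\beta\\-\beta\,\partial_{xx}Q_\beta\end{pmatrix}, \]
which is the spatial derivative of the vector soliton $R$ (using $\partial_t Q_\beta=-\beta\partial_x Q_\beta$). A direct computation using $L^+_\beta\partial_x Q_\beta=0$ gives $H\Phi_0=0$. For the negative direction, the form \eqref{complsq} suggests killing the non-negative term by taking $v_2=-\beta\partial_x v_1$ and choosing $v_1=Q^-_\beta$, the negative eigenfunction of $L^+_\beta$ from Lemma \ref{lemL}; I set
\[ \Phi_-:=\begin{pmatrix}Q^-_\beta\\-\beta\,\partial_x Q^-_\beta\end{pmatrix}, \quad\text{so that}\quad H\Phi_-=\begin{pmatrix}L^+_\beta Q^-_\beta\\ 0\end{pmatrix}=\begin{pmatrix}-\lambda_0 Q^-_\beta\\ 0\end{pmatrix}. \]
Then $\langle H\Phi_-|\Phi_-\rangle=-\lambda_0\|Q^-_\beta\|^2<0$, and exponential decay of $\Phi_0,\Phi_-$ follows from \eqref{Qminus} and the analogous decay of $Q_\beta$. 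The orthogonality $\langle\Phi_0|\Phi_-\rangle=0$ is a parity computation: $Q_\beta,Q^-_\beta$ are even, so $\partial_xQ_\beta$ and $\partial_xQ^-_\beta$ are odd, and both scalar products in $\langle\Phi_0|\Phi_-\rangle$ pair an odd with an even function.

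The coercivity \eqref{coer1} is the real work. The second constraint $\langle V|H\Phi_-\rangle=-\lambda_0(v_1|Q^-_\beta)=0$ gives $v_1\perp Q^-_\beta$ in $L^2$. Decompose $v_1=b\,\partial_x Q_\beta+v_1^\perp$ with $v_1^\perp$ orthogonal to both $\partial_x Q_\beta$ and $Q^-_\beta$ (using that $(\partial_x Q_\beta|Q^-_\beta)=0$ by parity). Since $L^+_\beta\partial_x Q_\beta=0$, Corollary \ref{coerL} yields
\[ (L^+_\beta v_1|v_1)=(L^+_\beta v_1^\perp|v_1^\perp)\ge\nu_0\|v_1^\perp\|_{H^1}^2, \]
so \eqref{complsq} gives $\langle HV|V\rangle\ge\|w\|^2+\nu_0\|v_1^\perp\|_{H^1}^2$ with $w:=v_2+\beta\partial_x v_1$. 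It remains to control $\|V\|^2=\|v_1\|_{H^1}^2+\|v_2\|^2$ by $\|w\|^2+\|v_1^\perp\|_{H^1}^2$, and the only missing ingredient is a bound on $b$. Substituting $v_2=w-\beta\partial_x v_1$ and the decomposition of $v_1$ into the first constraint $(v_1|\partial_x Q_\beta)-\beta(v_2|\partial_{xx}Q_\beta)=0$ yields
\[ b\bigl(\|\partial_x Q_\beta\|^2+\beta^2\|\partial_{xx}Q_\beta\|^2\bigr)=\beta(w|\partial_{xx}Q_\beta)-\beta^2(\partial_x v_1^\perp|\partial_{xx}Q_\beta), \]
and Cauchy--Schwarz produces $|b|\le C_\beta(\|w\|+\|v_1^\perp\|_{H^1})$. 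Combining with $\|v_1\|_{H^1}^2\le C(b^2+\|v_1^\perp\|_{H^1}^2)$ and $\|v_2\|^2\le 2\|w\|^2+C(b^2+\|v_1^\perp\|_{H^1}^2)$ gives $\|V\|^2\le C(\|w\|^2+\|v_1^\perp\|_{H^1}^2)\le C\min(1,\nu_0)^{-1}\langle HV|V\rangle$, which is \eqref{coer1}.

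\textbf{Main obstacle.} The delicate point is that the first orthogonality condition $\langle V|\Phi_0\rangle=0$ does \emph{not} decouple into a clean orthogonality on $v_1$ alone: it mixes $v_1$ and $v_2$ through $\beta$. So a naive two-projection argument modelled on the Schrödinger/KdV case fails. The right move, dictated by the completed-square identity \eqref{complsq}, is to change variable from $v_2$ to $w=v_2+\beta\partial_x v_1$; rewriting the first constraint in the $(w,v_1)$ variables produces a genuine bound on the translation coefficient $b$ in terms of the natural positive quantities $\|w\|$ and $\|v_1^\perp\|_{H^1}$, at which point the coercivity closes without any smallness assumption on $|\beta|<1$.
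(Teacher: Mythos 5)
Your proposal is correct, and the first half of it (self-adjointness, the completed-square identity, the choice of $\Phi_0$ and $\Phi_-$, their properties, and the reduction of the orthogonality $\langle V|H\Phi_-\rangle=0$ to $v_1\perp Q^-_\beta$) coincides with the paper's proof. The difference is in the final coercivity step. The paper argues by contradiction: it takes a normalized sequence $V^n$ with the orthogonality constraints and $\langle HV^n|V^n\rangle\to 0$, deduces $q_n\to 0$ in $H^1$ and $\beta\partial_x v_1^n+v_2^n\to 0$ in $L^2$ from the completed-square identity and Corollary~\ref{coerL}, then feeds the first orthogonality constraint into this to force $b_n\to 0$, reaching a contradiction with $\|V^n\|=1$. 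You instead carry out the same computation quantitatively: after substituting $v_2=w-\beta\partial_x v_1$ and $v_1=b\,\partial_xQ_\beta+v_1^\perp$ into $\langle V|\Phi_0\rangle=0$, you obtain a clean linear relation for $b$ whose right-hand side is controlled by $\|w\|_{L^2}+\|v_1^\perp\|_{H^1}$, and then bound $\|V\|^2$ directly by $\|w\|^2+\|v_1^\perp\|_{H^1}^2$, which the identity controls. Both arguments are sound; yours avoids the soft compactness step and yields an explicit coercivity constant $\alpha_0$ (which in turn depends on $\beta$ via $\|\partial_xQ_\beta\|^2+\beta^2\|\partial_{xx}Q_\beta\|^2$, a quantity that is bounded below away from zero for each fixed $|\beta|<1$), at the price of a slightly more intricate algebra. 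The paper's contradiction route is shorter to write but less informative. Both hinge on the same observation you isolate as the ``main obstacle'': because $\langle V|\Phi_0\rangle=0$ mixes $v_1$ and $v_2$, one must pass to the variable $w=v_2+\beta\partial_xv_1$ suggested by the completed square before the translation coefficient $b$ can be pinned down.
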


A stronger version of this result was stated by Grillakis, Shatah and Strauss
in \cite[Lemma 6.2]{GSS1}, but the proof given there contained a gap, as noted
in the errata at the end of \cite[page 347]{GSS2}. As a replacement, the
Proposition above (weaker than the original Grillakis-Shatah-Strauss result, but adequate for our purposes) was proposed in
the errata \cite{GSS2}, without proof. We have not found a clear definition and meaning of the function 
$\Phi_{-}$ in \cite{GSS2}, so therefore, for the convenience of the reader, we write the details of the
proof in the following lines.

\begin{proof}[Proof of Proposition \ref{PropH}] 
It is easy to check that $H$ is indeed a self-adjoint operator. On the
other hand, let $V = (v_1, v_2)^{T}$. We have from (\ref{H}), 
\begin{align}\label{Comp1}
\langle H V | V \rangle & =
\left\langle \begin{pmatrix} 
Tv_1 - \beta \partial_x v_2 \\
\beta \partial_xv_1 + v_2 
\end{pmatrix} \middle| \begin{pmatrix} v_1 \\ v_2 \end{pmatrix} \right\rangle \nonu\\
& =  (Tv_1| v_1) -  \beta (\partial_x v_2 | v_1) + \beta (\partial_x v_1 | v_2) + (v_2|v_2) \nonumber \\
& = (L_\beta^+ v_1 | v_1) + \beta^2  ( \partial_x v_1 | \partial_x v_1)  + 2 \beta ( v_2 | \partial_x v_1) +(v_2|v_2) \nonumber \\
&= (L_\beta^+ v_1| v_1) + ( \beta \partial_x v_1 + v_2 | \beta \partial_x v_1 + v_2).
\end{align}
Recalling the notation of Corollary \ref{coerL}, we define 
\be\label{Phi0}
 \Phi_0 := \begin{pmatrix}
\partial_x Q_\beta \\
- \beta \partial_{xx} Q_\beta
\end{pmatrix}, \quad 
 \Phi_- := \begin{pmatrix}
Q^-_\beta \\
-\bt \partial_x Q^-_\beta
\end{pmatrix}, 
%\quad \Phi_- := \tilde \Phi_- - \frac{\langle \tilde \Phi_- | \Phi_0 \rangle}{\langle  \Phi_0 | \Phi_0 \rangle} \Phi_0. 
%\tilde \Phi_- := \begin{pmatrix}
%Q^-_\beta \\
%0
%\end{pmatrix}, \quad \Phi_- := \tilde \Phi_- - \frac{\langle \tilde \Phi_- | \Phi_0 \rangle}{\langle  \Phi_0 | \Phi_0 \rangle} \Phi_0. 
\ee
One can check from (\ref{Comp1}) that $\langle \Phi_0 | \Phi_0 \rangle \neq 0$ and $\langle H \Phi_0 | \Phi_0 \rangle =0$, since $L_{\beta}^+ \partial_x Q_\beta =0$. Note additionally that by parity $\langle  \Phi_- |  \Phi_0 \rangle =0.$ Therefore, (\ref{10}) is directly satisfied. Also notice that 
\be\label{Hfi}
H\Phi_- = -\la_0 \begin{pmatrix}
Q_\beta^- \\
0
\end{pmatrix}.
\ee

\medskip

We now prove \eqref{coer1}. Let $V = (v_1,v_2)^{T} \in H^1(\R)\times L^2(\R)$ be satisfying the orthogonality properties
\[
\langle V|\Phi_{0} \rangle= \langle V| H\Phi_- \rangle =0. 
\]
Let us decompose $v_1$ in terms of the nonpositive spectral elements of $L_\bt^+$, and $L^2$-orthogonally:
\[
v_1 = a Q^-_\beta + b \partial_x Q_\beta + q,  \qquad (q|Q^-_\beta)=(q|\partial_x Q_\beta)=0. 
\]
From the orthogonality conditions in (\ref{coer1}), we have 
\[
\left\langle  \begin{pmatrix}
v_1 \\
v_2
\end{pmatrix}  \Big|  \begin{pmatrix}
Q_\beta^- \\
0
\end{pmatrix} \right\rangle =0,
\]
so that $a=0$, and hence from Corollary \ref{coerL},
\be\label{Hpos} 
\langle H V | V \rangle = (L_\beta^+ q|q) + ( \beta \partial_x v_1 + v_2| \beta \partial_x v_1 + v_2) \ge \nu_0 \| q \|_{H^1}^2 \ge 0.
\ee
We now argue by contradiction. Assume that there exists a normalized sequence $V^n = (v_1^n,v_2^n)^{T} \in H^1(\R)\times L^2(\R)$ that satisfies the orthogonality properties
\be\label{hypo1}
\langle V^n|\Phi_{0} \rangle= \langle V^n| H\Phi_- \rangle =0, \quad \| V^n \|^2 =1, \quad \text{and such that} \quad \langle H V^n | V^n \rangle \to 0.
\ee
Let us write the $L^2$-orthogonal decomposition for each $v_1^n$:
\[ 
v_1^n = b_n \partial_x Q_\beta + q_n, \qquad (q_n |\partial_x Q_\beta)=0.
\]
Then in view of \eqref{Hpos} and \eqref{hypo1} applied this time to the sequence $V^n$, $q_n \to 0$ in $H^1$ and $ \beta \partial_x v_1^n + v_2^n \to 0$ in $L^2$.  Now we compute
\begin{align*}
0 & = \langle V^n | \Phi_0 \rangle = \int (v_1^n \partial_x Q_\beta - v_2^n \beta \partial_{xx} Q_\beta) \\
& = \int v_1^n \partial_x Q_\beta +  \beta\int (\beta \partial_x v_1^n + o_{L^2} (1)) \partial_{xx} Q_\beta \\
& = b_n \| \partial_x Q_{\beta} \|_{L^2}^2  +\beta^2 \int  (  b_n \partial_{xx} Q_\beta +  \partial_x q_n) \partial_{xx} Q_\beta + o(1) \\
& = b_n ( \| \partial_x Q_{\beta} \|_{L^2}^2 + \beta^2 \| \partial_{xx} Q_\beta \|_{L^2}^2) - \beta^2 \int q_n \partial_{xxx} Q_\beta +o(1)
\end{align*}
Now $q_n \to 0$ in $L^2$, so that $( q_n | \partial_{xxx} Q_\beta ) \to 0$, and hence $b_n \to 0$ as $n \to +\infty$. But in this case, $v_1^n = b_n \partial_x Q_\beta + q_n \to 0$ in $H^1$ and $v_2^n = \beta \partial_x v_1^n  + o_{L^2}(1) \to 0$ in $L^2$. Hence $\| V^n \|^2 = \| v_1^n \|_{H^1}^2 + \| v_2^n \|_{L^2}^2 \to 0$, a contradiction to (\ref{hypo1}). 

%\medskip
%
%The uniformity of $\alpha_0$ with respect to $\beta$ comes from the fact that
%\[ \| \partial_x Q_{\beta} \|_{L^2}^2 + \beta^2 \| \partial_{xx} Q_\beta \|_{L^2}^2 = (1-\beta^2) \| \partial_x Q \|_{L^2}^2 + \beta^2 (1-\beta^2)^2 \| \partial_{xx} Q \|_{L^2}^2 \]
%is bounded from below by some constant $c >0$ for all $|\beta|<1$.
%

\medskip

It remains to show that $\langle H \Phi_{-} | \Phi_{-} \rangle <0$, namely (\ref{11}). Indeed,%f it were not the case, then we would have  $\langle HV | V \rangle \ge 0$ for all $V \in H^1 \times L^2$. But we can check that
\[ 
\langle H \Phi_{-} | \Phi_{-} \rangle= -\la_0
\left\langle  \begin{pmatrix}
Q^-_\beta \\
0
\end{pmatrix} \middle| \begin{pmatrix}
Q^-_\beta \\
\partial_x Q^{-}_\beta
\end{pmatrix} \right\rangle= - \la_0 \| Q^-_\beta \|_{L^2}^2 < 0. \]
%This shows that $\langle H \Phi_{-} | \Phi_{-} \rangle <0$ and concludes the proof.
\end{proof}

\subsection{Eigenfunctions of the linearized flow and Hessian}

It is still unclear whether or not the coercivity property \eqref{coer1} -- a key point in
the proof of any stability result -- is useful for us, since solitons are actually unstable. It turns out that for our purposes, we need a different version of Proposition \ref{PropH}, for the linearized operator of the {\bf flow around $Q$}. In order to state such a result, we introduce some additional notation. 

\medskip

Let $\bt\in \R$, $|\bt|<1$ be a Lorentz parameter, and consider the operators $T$, $J$, $L$ and $H$ defined in (\ref{T})-(\ref{H}). Let
\be\label{qL}
\q L = \q L(\bt) = JL = \begin{pmatrix}
0 & \Id \\
-T& 0
\end{pmatrix} , 
\ee
and
\be\label{qH}
\q H  % \q L^* - \beta \begin{pmatrix}\partial_x & 0  \\ 0 & \partial_x \end{pmatrix} 
= \begin{pmatrix}
- \beta \partial_{x} & -T \\
\Id & - \beta \partial_{x}
\end{pmatrix} = -HJ .
\ee

Concerning this last operator, we prove the following result.

\begin{lem}\label{Z}
Let $\bt\in \R$, $|\bt|<1$, $\ga = (1-\bt^2)^{-1/2}$ and $\la_0$ from Lemma \ref{lemL}. There are functions $Z_{0} =Z_{0,\bt}$, and $Z_{\pm} = Z_{\pm,\bt}$, with components exponentially decreasing in space, satisfying the spectral equations
\be\label{Zpm}
\q H Z_{0} = 0, \qquad \hbox{ and } \qquad
\q H Z_{\pm} = \pm \frac{\sqrt{\la_0}}{\gamma} Z_\pm. 
\ee
Moreover, by the nondegeneracy of the kernel spanned by $\Phi_{0}$, we can assume $\Phi_{0} = JZ_{0}$.
\end{lem}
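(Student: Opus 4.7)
The plan is to reduce $\q H Z = \mu Z$, with $Z = (z_1, z_2)^T$, to a scalar spectral problem for $z_2$ and then invoke Lemma~\ref{lemL}. Writing the system componentwise yields
\[
-\beta \partial_x z_1 - T z_2 = \mu z_1, \qquad z_1 - \beta \partial_x z_2 = \mu z_2.
\]
Solving the second equation for $z_1 = \mu z_2 + \beta \partial_x z_2$ and substituting into the first, together with the identity $T = L^+_\beta - \beta^2 \partial_{xx}$, I obtain the scalar equation
\[
L^+_\beta z_2 + 2\mu \beta \, \partial_x z_2 + \mu^2 z_2 = 0.
\]

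For $\mu = 0$ this reads $L^+_\beta z_2 = 0$, so by Lemma~\ref{lemL}, $z_2 = \partial_x Q_\beta$ (up to scalar), whence $z_1 = \beta \partial_{xx} Q_\beta$. Setting $Z_0 := (\beta \partial_{xx} Q_\beta, \partial_x Q_\beta)^T$, a direct inspection of the components gives $J Z_0 = \Phi_0$, as claimed. Simplicity of $Z_0$ in $\ker \q H$ is automatic: since $\q H = -HJ$, the equation $\q H Z = 0$ is equivalent to $JZ \in \ker H = \m R \Phi_0$, a one-dimensional space by Proposition~\ref{PropH}.

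For $\mu \ne 0$ I eliminate the first-order derivative in the scalar equation by the ansatz $z_2(x) = e^{ax} \phi(x)$ with $a = \mu \beta \gamma^2$. A direct expansion, using the identity $1 + \beta^2 \gamma^2 = \gamma^2$, transforms it into the purely spectral equation
\[
L^+_\beta \phi = -\mu^2 \gamma^2 \phi.
\]
By Lemma~\ref{lemL} a nontrivial $L^2$-solution exists exactly when $\mu^2 \gamma^2 = \lambda_0$, i.e., $\mu = \pm \sqrt{\lambda_0}/\gamma$, in which case $\phi \in \m R Q^-_\beta$. Undoing the substitution yields
\[
z_{2,\pm}(x) = e^{\pm \sqrt{\lambda_0} \, \beta \gamma \, x} Q^-_\beta(x), \qquad z_{1,\pm} = \pm \frac{\sqrt{\lambda_0}}{\gamma} z_{2,\pm} + \beta \, \partial_x z_{2,\pm}.
\]

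Exponential decay of both components of $Z_\pm$, together with all their derivatives, follows from \eqref{Qminus}: the bound $|Q^-_\beta(x)| \le C e^{-c_0 \gamma |x|}$ can be arranged for any $c_0 \in \bigl(|\beta|\sqrt{\lambda_0}, \sqrt{1+\lambda_0}\,\bigr]$, an interval that is nonempty because $|\beta|<1$. With such a choice, $z_{2,\pm}$ and all its derivatives decay at rate $(c_0 - |\beta|\sqrt{\lambda_0})\gamma > 0$, and the same holds for $z_{1,\pm}$ since it is a first-order polynomial in $\partial_x$ applied to $z_{2,\pm}$. The only genuine obstacle in the proof is the first-order term $2\mu \beta \partial_x z_2$ in the reduced equation, which is cleared cleanly by the exponential ansatz; beyond that, everything is a direct transcription of the known spectrum of $L^+_\beta$ from Lemma~\ref{lemL}.
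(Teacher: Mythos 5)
Your proposal is correct and follows essentially the same reduction as the paper: both reduce $\q H Z = \mu Z$ to a scalar ODE for the second component, clear the first-order term by an exponential ansatz, and read the eigenvalues $\mu = \pm\sqrt{\lambda_0}/\gamma$ off the spectrum of $L^+$ (equivalently $L^+_\beta$). The only cosmetic difference is that you work directly in the physical variable $x$ using $T = L^+_\beta - \beta^2\partial_{xx}$, whereas the paper rescales to $s=\gamma x$ and works with $L^+$; the formulas for $Z_0$ and $Z_\pm$ and the decay argument agree.
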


\begin{proof}
The proof is similar to that of \cite{GSS1}. In particular, we obtain explicit expressions for $Z_0$ and $Z_\pm$ in the following lines. 

\medskip

The eigenvalue problem $\q H Z =\la Z$ reads now, with $Z(x) = (Z_1(\ga x), Z_2(\ga x))^T$, 
\be\label{Z1Z2}
T Z_2 +  \beta (Z_1)_x + \la Z_1 =0, \quad Z_1 - \beta (Z_2)_x -\la Z_2 =0, 
\ee
Replacing $Z_1$ in the first equation above, we get in the variable $s=\ga x$ (recall that $Q_\bt (x) = Q(\ga x)$),
\[ 
-\ga^2 Z_2''  + Z_2 - f'(Q) Z_2 +\bt \ga (\la Z_2' +\bt \ga  Z_2'') + \la (\bt\ga Z_2' + \la Z_2) =0, 
\]
namely
\be\label{Z2}
-Z_2'' + Z_2 - f'(Q) Z_2 + 2\bt \ga \la Z_2'  =   -\la^2 Z_2.
\ee
Performing the transformation $Z_2(s) := \tilde Z_2(s) e^{ \bt \ga \la s} $, where $s\in \R$, we get
\[
- \tilde Z_2'' + \tilde Z_2 - f'(Q)\tilde Z_2  = -( \bt^2 \ga^2 +1)\la^2 \tilde Z_2 = -\la^2 \ga^2 \tilde Z_2.
\]
Therefore, by virtue of  Lemma \ref{lemL} we can take $\tilde Z_2 = Q^-(s)$ and $\la_\pm \ga = \pm \sqrt{\la_0}$, where $-\la_0<0$ % = -\frac 14(p+3)(p-1)$
 is the first eigenvalue of the standard Schr\"odinger operator $L^+$, defined in (\ref{Lp}).  Thus, 
\[
Z_{\pm,2}(s) =Q^-(s) e^{\pm \bt \sqrt{\la_0} s}.
\]
Note that from (\ref{Qminus}), $Z_{\pm,2}$ decreases exponentially at both sides of the origin, since $|\bt|<1$ and $\bt \sqrt{\la_0} -\sqrt{1+\la_0}<0$ .

\medskip

From (\ref{Z1Z2}), we have
\begin{align*}
Z_{\pm,1} (s) & =  \bt \ga Z_{\pm,2}'(s) + \la_\pm Z_{\pm,2}(s)  \\
& =  \Big[\bt \ga (Q^-)_s  \pm \bt^2 \ga \sqrt{\la_0}   Q^-   \pm \frac{\sqrt{\la_0}}{\ga} Q^- \Big] e^{\pm \bt \sqrt{\la_0} s} \nonu\\
& = \ga \big[\bt  (Q^-)_s  \pm  \sqrt{\la_0}   Q^-   \big] e^{\pm \bt \sqrt{\la_0} s}.
\end{align*}
By the same reasons as above, $Z_{\pm,1}$ is an exponentially decreasing function. From these identities, we have
\begin{align}\label{decr1}
Z_{\pm} (x) &= \left( \begin{array}{c}  \ga \bt  (Q^-)_s (\ga x) \pm  \ga \sqrt{\la_0}   Q^- (\ga x)    \\   Q^-(\ga x) \end{array}\right)e^{\pm \bt \sqrt{\la_0} \ga x} \nonu \\
& =  \left( \begin{array}{c}   \bt  (Q^-_\bt)_x (x) \pm  \ga \sqrt{\la_0}   Q^-_\bt (x)   \\   Q^-_\bt (x) \end{array}\right)e^{\pm \bt \sqrt{\la_0} \ga x}.
\end{align}
Now, we consider the computation of $Z_0$. Replacing $\la=0$ in (\ref{Z2}), we can choose
\[
Z_{0,2}(s) =Q'(s), \quad  \hbox{ and } \quad Z_{0,1} = \bt \ga Q''(s), 
\]
from which we get
\be\label{decr2}
Z_0 (x)= \ga \left( \begin{array}{c}  Z_{0,1}(x)   \\   Z_{0,2} (x)\end{array}\right) = \ga \left( \begin{array}{c}  \bt \ga Q''(\ga x)   \\   Q' (\ga x)\end{array}\right) = \left( \begin{array}{c}  \bt Q_\bt''( x)   \\   Q_\bt ' (x)\end{array}\right) .
\ee
It is clear that $\q H  Z_0= 0 $. Similarly, we have $\ds \q H  Z_{\pm}=  \pm \frac{\sqrt{\la_0}}{\ga} Z_\pm $, which proves (\ref{Zpm}).
\end{proof}

In order to prove Proposition \ref{prop:22}, we need to prove the existence of two additional functions, both associated to $Z_\pm$.

\begin{lem}\label{Ypm}
There exist unique functions $Y_\pm$, with components exponentially decreasing in space, such that 
\[
 H Y_\pm = Z_\pm, \qquad \langle \Phi_{0} | Y_\pm \rangle=0.
\]
Moreover, $Y_\pm$ satisfy the additional orthogonality conditions $\langle Y_\pm | HY_\pm \rangle=0$. %Finally, $Y_+$ and $Y_-$ are linearly independent in $L^2(\R)$. 
\end{lem}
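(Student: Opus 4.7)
My plan is to build $Y_\pm$ explicitly from the eigenfunctions $Z_\pm$ of $\q H$. The key observation is that the identity $\q H = -HJ$ (equation \eqref{qH}) converts the eigenvalue equations \eqref{Zpm} into
\[
H(JZ_\pm) = \mp \frac{\sqrt{\la_0}}{\gamma}\, Z_\pm,
\]
suggesting the natural candidate
\[
Y_\pm := \mp \frac{\gamma}{\sqrt{\la_0}}\, JZ_\pm + c_\pm \Phi_0, \qquad c_\pm \in \m R.
\]
Since $\Phi_0 \in \ker H$ by Proposition \ref{PropH}, this satisfies $HY_\pm = Z_\pm$ for any $c_\pm$. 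Exponential decay of the components of $Y_\pm$ is inherited from that of $Z_\pm$ in \eqref{decr1} and $\Phi_0$ in \eqref{Phi0}, so the remaining task is to choose $c_\pm$ so that $\langle \Phi_0 | Y_\pm \rangle = 0$.

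Before this is consistent, I would verify the solvability condition $\langle Z_\pm | \Phi_0 \rangle = 0$. Using $\Phi_0 = JZ_0$, $H\Phi_0 = 0$, and self-adjointness of $H$,
\[
\pm \frac{\sqrt{\la_0}}{\gamma}\langle Z_\pm | \Phi_0 \rangle = \langle \q H Z_\pm | \Phi_0 \rangle = -\langle HJZ_\pm | JZ_0 \rangle = -\langle JZ_\pm | H\Phi_0 \rangle = 0,
\]
so the orthogonality holds automatically from the algebraic relations.

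Uniqueness then reduces to the kernel structure $\ker H = \Span\{\Phi_0\}$ from Proposition \ref{PropH}: any two solutions of $HY = Z_\pm$ differ by a scalar multiple of $\Phi_0$, and that multiple is uniquely fixed by the orthogonality to $\Phi_0$, since $\langle \Phi_0 | \Phi_0 \rangle \ne 0$ as noted in the proof of Proposition \ref{PropH}.

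Finally, for the extra orthogonality $\langle Y_\pm | HY_\pm \rangle = \langle Y_\pm | Z_\pm \rangle$, I expand using the definition of $Y_\pm$:
\[
\langle Y_\pm | Z_\pm \rangle = \mp \frac{\gamma}{\sqrt{\la_0}} \langle JZ_\pm | Z_\pm \rangle + c_\pm \langle \Phi_0 | Z_\pm \rangle.
\]
The first term vanishes by antisymmetry of $J$ (so that $\langle JU | U \rangle \equiv 0$), and the second vanishes by the solvability condition already established. I do not anticipate any real obstacle here: the entire argument is purely algebraic, resting on the relations between $H$, $\q H$, $J$ and the kernels determined in Proposition \ref{PropH} and Lemma \ref{Z}.
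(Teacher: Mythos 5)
Your proof is correct, and it is a genuine (modest) simplification of the paper's argument at one point. The key new move is the explicit inversion: from $\q H = -HJ$ and $\q H Z_\pm = \pm\frac{\sqrt{\la_0}}{\gamma} Z_\pm$ you get $HJ Z_\pm = \mp\frac{\sqrt{\la_0}}{\gamma} Z_\pm$, hence
\[
Y_\pm = \mp\frac{\gamma}{\sqrt{\la_0}} J Z_\pm + c_\pm \Phi_0
\]
solves $HY_\pm = Z_\pm$. This immediately yields exponential decay of $Y_\pm$, inherited from $Z_\pm$ and $\Phi_0$ via the bounded map $J$. The paper instead proves decay by writing out the component system $T Y_{\pm,1} - \beta(Y_{\pm,2})_x = Z_{\pm,1}$, $\beta(Y_{\pm,1})_x + Y_{\pm,2} = Z_{\pm,2}$, reducing to the scalar ODE $L_\beta^+ Y_{\pm,1} = \beta(Z_{\pm,2})_x + Z_{\pm,1}$, and invoking elliptic decay estimates for $L_\beta^+$. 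Your route skips that step entirely. The rest is the same skeleton: solvability via $\langle Z_\pm \mid \Phi_0 \rangle = 0$ (which you derive in essentially the paper's way, from self-adjointness of $H$ and $H\Phi_0 = 0$), uniqueness from $\ker H = \Span\{\Phi_0\}$ plus $\langle \Phi_0 \mid \Phi_0 \rangle \ne 0$, and the final orthogonality $\langle Y_\pm \mid H Y_\pm\rangle = 0$ from the skew-symmetry $\langle JU \mid U\rangle \equiv 0$ together with $\langle \Phi_0 \mid Z_\pm\rangle = 0$ — the paper reaches this last identity by instead rewriting $\langle Y_\pm \mid Z_\pm\rangle$ via $\q H$ and self-adjointness of $H$, arriving at $\langle Z_\pm \mid J Z_\pm \rangle = 0$, so both ultimately hinge on the same skew-symmetry of $J$.
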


\begin{proof}
Let us prove the existence of $Y_\pm$. It is well-known that a necessary and sufficient condition for existence is the following  condition: it suffices to check that $Z_\pm$ are orthogonal to $\Phi_{0}$, the generator of the kernel of $H$. Indeed, we have from (\ref{Zpm}), (\ref{qH}), the self-adjointedness of $H$ and Proposition \ref{PropH},
\[
\langle \Phi_{0}|Z_\pm \rangle =  \pm \frac\ga{\sqrt{\la_0}} \langle \Phi_{0} | \q H Z_\pm \rangle  =\mp\frac\ga{\sqrt{\la_0}} \langle \Phi_{0} | H J Z_\pm \rangle  =0.
\]
However, we need some additional estimates on $Y_\pm$. In what follows, we write down explicitly the equation $HY_\pm =Z_\pm$. It is not difficult to check that $Y_\pm = (Y_{\pm,1}, Y_{\pm,2})^T$ satisfies the equations
\[
T Y_{\pm,1} -\beta (Y_{\pm,2})_x = Z_{\pm,1},    \qquad   \beta (Y_{\pm,1})_x + Y_{\pm,2} = Z_{\pm,2}.
\]
Replacing the second equation in the first one, we get (cf. \eqref{Lp})
\[
L_\bt^+ Y_{\pm,1} = \bt (Z_{\pm,2})_x + Z_{\pm,1}.
\]
Note that $(\bt (Z_{\pm,2})_x + Z_{\pm,1}| \partial_x Q_\bt) =0$. Therefore, $Y_{\pm,1}$ exists and it is exponentially decreasing, with the same rate as $Z_{\pm,1} $ and $Z_{\pm,2}$. A similar conclusion follows for $Y_{\pm,2}$.

\medskip

Since $Y_{\pm}$ is unique modulo the addition of a constant times $\Phi_{0}$,  it is clear that we can choose $Y_\pm$ such that $\langle \Phi_{0} | Y_\pm \rangle=0$. On the other hand, from Lemma \ref{Z},
\begin{align*}
\langle Y_\pm | HY_\pm \rangle & = \langle Y_\pm | Z_\pm \rangle =\pm \frac{\ga}{\sqrt{\la_0}}\langle Y_\pm | \q H Z_\pm \rangle = \mp \frac{\ga}{\sqrt{\la_0}} \langle HY_\pm | J Z_\pm \rangle \\
& = \mp \frac{\ga}{\sqrt{\la_0}} \langle Z_\pm | J Z_\pm \rangle =0.
\end{align*}
\end{proof}

The main result of this section is the following alternative to Proposition \ref{PropH}.

\begin{prop}\label{prop:22}
There exists $\mu_0 >0$ such that the following holds. Let $V \in H^1 \times L^2$ such that $\langle \Phi_{0}| V \rangle =0$. Then
\[
\langle H  V|V \rangle \geq \mu_0 \|V\|^2 - \frac 1{\mu_0} \Big[ \langle Z_+| V\rangle^2 + \langle Z_-| V\rangle^2\Big] .
\]
\end{prop}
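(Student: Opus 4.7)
The plan is to reduce Proposition \ref{prop:22} to Proposition \ref{PropH} by a decomposition of $V$ adapted to $\Phi_-$, $Y_+$, $Y_-$. For any $V\in\Phi_0^\perp$, I would write
\[
V = \mu\Phi_- + s_+ Y_+ + s_- Y_- + W,
\]
and determine the scalars $(\mu,s_+,s_-)$ so that $W$ simultaneously satisfies $\langle W|H\Phi_-\rangle=0$, $\langle W|Z_+\rangle=0$, $\langle W|Z_-\rangle=0$. The fourth condition $\langle W|\Phi_0\rangle=0$ then comes for free, since $\Phi_-, Y_\pm$ are all $\Phi_0$-orthogonal by Proposition \ref{PropH} and Lemma \ref{Ypm}. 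With all four conditions in force, Proposition \ref{PropH} applies to $W$ and yields $\langle HW|W\rangle \geq \alpha_0\|W\|^2$.

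The first task is to verify invertibility of the $3\times 3$ linear system for $(\mu, s_+, s_-)$. Using Lemma \ref{Z}, Lemma \ref{Ypm}, and the identity $H\Phi_- = -\lambda_0 (Q^-_\beta, 0)^T$ from \eqref{Hfi}, the relevant pairings come out to $\langle\Phi_-|H\Phi_-\rangle = -\lambda_0/\gamma =: -e$, $\langle\Phi_-|Z_\pm\rangle = \pm\sqrt{\lambda_0}\,I_0 =: \pm d$ with $I_0 := \int (Q^-(s))^2 \cosh(\beta\sqrt{\lambda_0}s)\,ds$, $\langle Y_\pm|Z_\pm\rangle=0$, and $\langle Y_\pm|Z_\mp\rangle = 2\gamma =: c$ (this last one coming from the relation $Z_\pm = \mp(\gamma/\sqrt{\lambda_0})HJZ_\pm$ and a direct computation of $\langle JZ_-|Z_+\rangle = 2\sqrt{\lambda_0}$). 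The determinant of the system factors as $c(ec - 2d^2) = -4\gamma\lambda_0(I_0^2-1)$, nonzero precisely when $\beta\neq 0$. The degenerate case $\beta=0$ is treated separately: one has $\Phi_- = (\sqrt{\lambda_0}/2)(Y_- - Y_+)$, so a 2D decomposition $V = s_+ Y_+ + s_- Y_- + W$ (with $\langle W|Z_\pm\rangle=0$) suffices, and $\langle W|H\Phi_-\rangle=0$ follows automatically.

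With the decomposition in place, expanding $\langle HV|V\rangle$ using $HY_\pm = Z_\pm$ and the orthogonalities of $W$ kills every $W$-cross-term and leaves
\[
\langle HV|V\rangle = -e\mu^2 + 2d\mu(s_+ - s_-) + 2cs_+s_- + \langle HW|W\rangle,
\]
while $\langle V|Z_\pm\rangle = \pm d\mu + cs_\mp$. Adding $A[\langle V|Z_+\rangle^2 + \langle V|Z_-\rangle^2]$ for $A > 1/c$ and passing to the variables $u = s_+ + s_-$, $v = s_+ - s_-$, the resulting quadratic form decouples into a positive $u$-part (leading coefficient $c(1+Ac)/2$) and a $(\mu, v)$-block whose determinant simplifies to the clean expression $\tfrac{1}{2}(Ac-1)(2d^2-ec)$. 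The main technical obstacle is that although each entry of the $(\mu,v)$-block is of size $O(A)$, the $O(A^2)$ contributions to the determinant cancel exactly, so positivity must be extracted from the $O(A)$ subleading term; this rests on the inequality $2d^2 - ec = 2\lambda_0(I_0^2-1)>0$, valid precisely when $\beta\neq 0$ (since $I_0$ is a strictly convex average of $(Q^-)^2$ against $\cosh$). Combining positive-definiteness of the quadratic form with $\langle HW|W\rangle \geq \alpha_0\|W\|^2$ and the triangle-inequality bound $\|V\|^2 \leq C(\mu^2 + s_+^2 + s_-^2 + \|W\|^2)$, one obtains $\langle HV|V\rangle + A[\langle V|Z_+\rangle^2 + \langle V|Z_-\rangle^2] \geq c'\|V\|^2$ for some $c'>0$, and Proposition \ref{prop:22} follows upon choosing $\mu_0 \leq \min(c', 1/A)$.
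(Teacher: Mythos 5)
Your proof is correct, and it takes a genuinely different route from the paper's. The paper reduces to the case $\langle V|Z_\pm\rangle = 0$ and decomposes only $V = \tilde V + \alpha_-\Phi_-$ and $Y_\pm = \tilde Y_\pm + \delta_\pm \Phi_-$ (with $\tilde V,\tilde Y_\pm$ in the cone of coercivity of Proposition~\ref{PropH}); it then relates $\alpha_-$ to $\tilde V$ via the orthogonality relations and a Cauchy--Schwarz-type supremum quantity $a<1$, where $a<1$ is deduced qualitatively from the linear independence of $\tilde Y_+$ and $\tilde Y_-$, itself a consequence of the distinct exponential decay rates of $Z_\pm$. Your approach is more computational: you carry out the full decomposition $V = \mu \Phi_- + s_+ Y_+ + s_- Y_- + W$ and explicitly evaluate all the relevant pairings ($\langle\Phi_-|H\Phi_-\rangle = -\lambda_0/\gamma$, $\langle\Phi_-|Z_\pm\rangle = \pm\sqrt{\lambda_0}\,I_0$, $\langle Y_\pm|Z_\pm\rangle=0$, $\langle Y_\pm|Z_\mp\rangle=2\gamma$ --- all of which I checked), reducing everything to the positivity of an explicit finite-dimensional quadratic form. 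In particular, the role of the paper's abstract supremum $a<1$ is played in your argument by the strict inequality $I_0 = \int (Q^-)^2 \cosh(\beta\sqrt{\lambda_0}s)\,ds > 1$, which makes the nondegeneracy hypothesis $\beta \ne 0$ completely transparent. A nice byproduct of your approach is that you do not need the preliminary reduction ``it suffices to assume $\langle Z_\pm|V\rangle=0$'': you handle the correction terms $\langle Z_\pm|V\rangle^2$ directly in the quadratic form. The two proofs rest on the same ingredients (Proposition~\ref{PropH} and Lemmas~\ref{Z} and~\ref{Ypm}), and both isolate $\beta=0$ as a separate, degenerate case; yours trades the paper's soft compactness/Cauchy--Schwarz argument for concrete linear algebra, which makes the constants traceable in $\beta$.

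Two minor remarks for the write-up. First, when you invoke positivity of the $(\mu,v)$-block from its determinant alone, you should also note that the top-left entry $-e + 2Ad^2 > 0$ for $A > 1/c$; this does hold (since $e/(2d^2) = 1/(2\gamma I_0^2) < 1/(2\gamma) = 1/c$ once $I_0>1$), but it is worth stating. Second, the phrase ``$I_0$ is a strictly convex average of $(Q^-)^2$ against $\cosh$'' is a bit loose; the clean statement is simply that $\cosh(\beta\sqrt{\lambda_0}s)\ge 1$ with equality only at $s=0$, and $\|Q^-\|_{L^2}=1$, whence $I_0 > 1$ whenever $\beta\ne 0$.
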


\begin{proof}
It is enough to prove that $\langle \Phi_{0}| V\rangle =  \langle Z_+| V\rangle =  \langle Z_-| V\rangle =0$ imply
\[
\langle H  V|V\rangle \geq \mu_0 \|V\|^2,
\] 
for some $\mu_0>0$, independently of $V$. In order to prove this assertion, we first assume $\bt\neq 0$ and decompose orthogonally $V$ and $Y_\pm$ (cf. the previous Lemma) as follows
\be\label{decoa}
V = \tilde V  + \al_- \Phi_-  + \al_0 \Phi_0, \quad Y_\pm = \tilde Y_\pm  + \delta_0 \Phi_0  + \delta_\pm \Phi_- ,
\ee
 with 
\be\label{decob}
\langle \tilde V | \Phi_0 \rangle = \langle \tilde Y_\pm | \Phi_0 \rangle= \langle \tilde V | H\Phi_- \rangle =\langle \tilde Y_\pm | H \Phi_- \rangle=0 .
\ee
Since $\langle \Phi_{0}| \Phi_- \rangle =\langle \Phi_{0}| V \rangle =\langle \Phi_{0}| Y_\pm \rangle=0$ and $\langle \Phi_- | H\Phi_- \rangle<0$, it is clear that $\al_0 =\delta_0=0$ and $\al_-, \delta_\pm$ are well-defined. Moreover, 

\medskip

\begin{claim} For all $\bt \in (-1,1)\backslash\{0\}$,
 $\tilde Y_+$ and $\tilde Y_-$ are linearly independent as $L^2(\R)^2$ vector-valued functions with real coefficients. 
\end{claim}

\medskip

Indeed, to see this, assume that there is $\tilde \la \neq 0$ such that $\tilde Y_+ = \tilde \la \tilde Y_-$. Then, from the previous decomposition and Lemma \ref{Ypm},
\be\label{star}
Z_+ - \tilde \la Z_- = H (Y_+ -\tilde \la Y_-)  = (\delta_+ - \tilde \la \delta_-) H \Phi_-.
\ee
This identity contradicts (\ref{decr1}) and (\ref{Phi0}), which establish that $Z_+$ and $Z_-$ have essentially different rates of decay at infinity, different to that of $H\Phi_-$, for all $\bt \neq 0$, which makes (\ref{star}) impossible.

\medskip

The analysis is now similar to that in \cite[Lemma 5.2]{DM2}. We have from (\ref{decoa}),
\be \label{eq:prop2_2}
\langle HV|V\rangle =  \langle H\tilde V +  \al_- H\Phi_-  |\tilde V + \al_- \Phi_-\rangle = \langle H\tilde V|\tilde V\rangle +  \al_-^2 \langle H \Phi_- |\Phi_- \rangle. 
\ee
On the other hand, since $\langle Z_\pm |V \rangle =0$, we have from Lemma \ref{Ypm},
\[
0 =\langle Y_\pm |H V\rangle =  \langle \tilde Y_\pm + \delta_\pm \Phi_- |H \tilde V  +  \al_- H\Phi_-\rangle  = \langle \tilde Y_\pm |H \tilde V \rangle  +  \al_- \delta_\pm \langle H \Phi_- |\Phi_- \rangle.
\]
Similarly,
\[
0= \langle HY_\pm | Y_\pm \rangle = \langle H\tilde Y_\pm|\tilde Y_\pm \rangle + \delta_\pm^2 \langle H \Phi_- |\Phi_- \rangle. 
\]
We get then
\be \label{eq:prop2_1}
 \langle H V|V \rangle = \langle H\tilde V|\tilde V\rangle - \frac{  \langle \tilde Y_- |H \tilde V \rangle \langle \tilde Y_+ |H \tilde V \rangle }{\sqrt{ \langle H\tilde Y_+|\tilde Y_+\rangle \langle H\tilde Y_-|\tilde Y_-\rangle}}.
\ee
Consider 
\[ a := \sup_{W \in \Span( \tilde Y_+, \tilde Y_- ) \setminus \{ \vec 0 \}} \left| \frac{  \langle \tilde Y_+ |H \tilde W \rangle}{\sqrt{\langle H\tilde Y_+|\tilde Y_+\rangle \langle H W| W \rangle} } \cdot \frac{\langle \tilde Y_- |H \tilde W \rangle }{\sqrt{ \langle H\tilde Y_-|\tilde Y_-\rangle \langle H W|W \rangle }} \right|. \]
Recall $\langle H \cdot | \cdot \rangle$ is positive definite on $ \Span( \Phi_0, \Phi_- )^\perp$. Hence apply Cauchy Schwarz's inequality to both terms of the product: it transpires that $a \le 1$. Furthermore, if $a=1$ (as $\Span( \tilde Y_+, \tilde Y_- )$ is finite dimensional), there exists $W$ of norm $1$ such that both terms are in the equality case in the Cauchy-Schwarz inequality, i.e $W$ and $\tilde Y_+$ are linearly dependent, and $W$ and $\tilde Y_-$ are also linearly dependent. But it would then follow that $\tilde Y_+$ and $\tilde Y_-$ are linearly dependent, a contradiction the above claim. This proves $a<1$. 

\smallskip

Now using $H$-orthogonal decomposition on $\Span( \Phi_0, \Phi_- )^\perp$, we deduce that
\[ \forall  \ W \in \Span( \Phi_0, \Phi_- )^\perp, \quad \left| \frac{  \langle \tilde Y_- |H  W \rangle \langle \tilde Y_+ |H W \rangle }{\sqrt{ \langle H\tilde Y_+|\tilde Y_+\rangle \langle H\tilde Y_-|\tilde Y_-\rangle}} \right| \le a \langle H W| W \rangle. \]

By \eqref{eq:prop2_1}, \eqref{decob} and \eqref{coer1}, we get
\[ \langle H V|V \rangle \ge (1-a) \langle H \tilde V | \tilde V \rangle \ge \alpha_0 (1-a) \| \tilde V \|^2 \ge 0 . \]
and so \eqref{eq:prop2_2} implies $\langle H \tilde V | \tilde V \rangle \ge \alpha_-^2 |\langle H \Phi_- | \Phi_- \rangle|$.

We then conclude that, for $C = \frac{4}{(1-a)} \max \left( \frac{1}{\al_0}, \frac{\| \Phi_- \|^2}{|\langle H \Phi_- | \Phi_- \rangle|} \right)$,
\begin{align*}
 C \langle H V|V \rangle &  \ge C  (1-a) \langle H \tilde V | \tilde V \rangle \\
& \ge \frac{C(1-a)}{2} (\langle H \tilde V | \tilde V \rangle  + \alpha_-^2 |\langle H \Phi_- | \Phi_- \rangle| )\\
& \ge 2 \| \tilde V \|^2 + 2 \alpha_-^2 \| \Phi_- \|^2 \ge \| \tilde V + \alpha_- \Phi_- \|^2 = \| V \|^2. 
\end{align*}

\medskip

Finally, if  $\bt =0$, we proceed as follows. First of all, we have from (\ref{decr1}) and (\ref{Phi0}),
\[
Z_\pm =  Q^- \Big(\begin{array}{c} \pm  \sqrt{\la_0} \\ 1  \end{array} \Big),  \quad  \Phi_0 = Q' \begin{pmatrix}
1 \\
0
\end{pmatrix}, \quad % \Phi_- = Q^- \begin{pmatrix}
%1 \\
%0
%\end{pmatrix},
\]
so that $ \langle \Phi_0 |V \rangle = \langle Z_\pm |V \rangle =0$ imply $ (v_1| Q')=(v_1 | Q^- ) =(v_2 | Q^- )=0 $, where $V =(v_1,v_2)^T$.  Therefore,
%\begin{align*}
%\langle H V|V \rangle  & = (L_\beta^+ v_1| v_1) + ( \beta \partial_x v_1 + v_2 | \beta \partial_x v_1 + v_2) \\
%& = (L_\beta^+ v_1| v_1) +    \beta^2 \|\partial_x v_1\|^2 + 2\bt(\partial_x v_1 | v_2) + \|v_2\|^2 \\
%& \ge \nu_0 \|v_1\|_{H^1}^2 -\frac{\ve\bt^2}{1-\ve} \|\partial_xv_1\|^2 + \ve \|v_2\|^2\\
%& \ge \frac 12\nu_0 \|V\|^2.
%\end{align*}
\[
\langle H V|V \rangle   = (L^+ v_1| v_1) + (  v_2 |  v_2)  \ge\nu_0 \|V\|^2.
%& \ge \nu_0 \|v_1\|_{H^1}^2 -\frac{\ve\bt^2}{1-\ve} \|\partial_xv_1\|^2 + \ve \|v_2\|^2\\
\]
\qedhere
\end{proof}

\subsection{Extension to higher dimensions}

The equivalent of Lemma \ref{lemL} (and therefore assumption $(iv)$ of the Introduction) in dimension $d\geq 2$ has the form 

\begin{lem} \label{lemLd} 
Assume $d \ge 2$ and assumption $(B)$ holds.
$L^+$ has exactly one negative eigenvalue, and its kernel is spanned by $(\partial_{x_i} Q)_{i=1,\dots, d}$. Its continuous spectrum is $[1,+\infty)$.
\end{lem}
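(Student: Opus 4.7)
My plan is to follow the classical spectral analysis of Schr\"odinger operators associated to ground states. First, the essential spectrum of $L^+$ can be identified via Weyl's theorem: since $Q$ decays exponentially (Gidas-Ni-Nirenberg), so does $f'(Q) = \la p Q^{p-1}$, and hence $-f'(Q)$ is a relatively $-\Delta$-compact perturbation of $-\Delta + \Id$. This yields $\sigma_{\mathrm{ess}}(L^+) = \sigma_{\mathrm{ess}}(-\Delta+\Id) = [1,+\infty)$, and confines any discrete spectrum to $(-\infty,1)$, with $0$ itself necessarily an isolated eigenvalue if it lies in the spectrum.

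For the discrete spectrum and kernel, I would decompose $L^2(\R^d)$ into spherical harmonics, reducing $L^+$ on the angular sector of degree $\ell$ to the one-dimensional Sturm-Liouville operator
\[
L^+_\ell = -\partial_r^2 - \frac{d-1}{r}\partial_r + \frac{\ell(\ell+d-2)}{r^2} + 1 - f'(Q(r))
\]
acting on $L^2((0,+\infty), r^{d-1}\, dr)$. On the $\ell=1$ sector, the translation identity $L^+ \partial_{x_i}Q = 0$ becomes $L^+_1(\partial_r Q) = 0$, and since $Q$ is radially strictly decreasing (Gidas-Ni-Nirenberg), $\partial_r Q<0$ on $(0,+\infty)$. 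A non-sign-changing zero-energy eigenfunction of a 1D Schr\"odinger-type operator must be the ground state (Perron-Frobenius / oscillation theory), so $L^+_1 \ge 0$ with simple kernel spanned by $\partial_r Q$; lifting back to $\R^d$ yields the expected contribution $\Span(\partial_{x_i}Q)_{i=1,\dots,d}$ to $\ker L^+$. For $\ell \ge 2$, the excess centrifugal term $[\ell(\ell+d-2)-(d-1)]/r^2$ is strictly positive on $(0,+\infty)$, so $L^+_\ell > L^+_1 \ge 0$ strictly, which rules out both negative eigenvalues and kernel in these sectors.

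The delicate step is $\ell=0$. Using $-\Delta Q + Q = f(Q)$, a direct computation gives $L^+_0 Q = f(Q) - f'(Q) Q = -(p-1)\la Q^p$, so $(L^+_0 Q | Q) = -(p-1)\la \|Q\|_{L^{p+1}}^{p+1} < 0$, producing at least one negative eigenvalue in the radial sector. To show there is exactly one and that $\ker L^+_0 = \{0\}$, I would invoke Weinstein's variational characterization of $Q$ as the radial minimizer of the functional $\|\nabla u\|_{L^2}^{a}\|u\|_{L^2}^{b}/\|u\|_{L^{p+1}}^{p+1}$ (with exponents $a,b$ dictated by scaling). The second variation at $Q$, combined with the Pohozaev/scaling identities, forces $L^+_0 \ge 0$ on a codimension-one subspace of the radial sector, so the negative eigenvalue is simple, and Kwong's uniqueness result rules out any nontrivial radial zero mode (which would otherwise generate a one-parameter family of radial solutions of \eqref{Q}).

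The hardest part is precisely this $\ell=0$ analysis: excluding the second negative eigenvalue is where the pure-power assumption $(B)$ is used in an essential way, through the conformally-type scaling identities that underlie Weinstein's variational setup. Everything else (essential spectrum, higher-$\ell$ sectors, the $\ell=1$ kernel) is robust and would go through for a broad class of nonlinearities; this is consistent with the authors' remark that assumption $(v)$ encodes precisely the conclusion of this lemma in the abstract setting.
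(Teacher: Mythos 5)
The paper does not prove this lemma at all: its ``proof'' is a single line citing Maris \cite{M} and McLeod \cite{ML}. Your proposal is therefore a genuinely different route, namely an explicit spectral argument along the classical lines: essential spectrum via Weyl's theorem, spherical-harmonic decomposition, the $\ell=1$ sector handled via the sign-definiteness of $\partial_r Q$ (so $Q'$ is the ground state of $L^+_1$, hence $L^+_1 \ge 0$ with simple kernel), the $\ell\ge 2$ sectors killed by the strictly positive excess centrifugal barrier, and the radial sector controlled via the pointwise identity $L^+Q = -(p-1)\la Q^p$ together with Weinstein's variational characterization. This is more informative than the paper's bare citation, and it correctly isolates where the pure-power assumption $(B)$ is genuinely used (the $\ell=0$ analysis); what the paper's one-line citation ``buys'' is simply that it avoids reproducing this known, but not short, argument.

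One precision to flag in the last step: Kwong's \emph{uniqueness} theorem does not by itself rule out a nontrivial radial zero mode. The heuristic ``a radial kernel element would generate a one-parameter family of radial solutions'' is the right picture, but uniqueness alone does not prevent such a family from collapsing; to make it rigorous one must combine an implicit-function-theorem argument with the scaling identity $L^+\Lambda Q = -2Q$ for $\Lambda Q = \tfrac{2}{p-1}Q + x\cdot\nabla Q$, or (as Kwong/McLeod actually do) derive nondegeneracy directly from the ODE shooting and oscillation analysis as a statement separate from uniqueness. This is a citation-level precision, not a structural flaw: with nondegeneracy invoked as its own established result (rather than as a corollary of uniqueness), your argument closes.
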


\begin{proof} 
See Maris \cite{M} and McLeod \cite{ML}.
\end{proof}

As mentioned in the Introduction, this result is open for general nonlinearity $f$. In that case, we need to assume that it holds, i.e. assumption $(v)$.

\medskip

The null directions for $H$ are now the $d$-dimensional vector space spanned by the functions $\Phi_{0,i} = \begin{pmatrix}
\partial_{i} Q \\ 
\beta . \nabla \partial_i Q
\end{pmatrix}$.
In the proof of Lemma \ref{Z}, one should rather perform the transformation $\tilde Z_2 = Z_2 e^{-\gamma \lambda \beta \cdot x}$. The rest of the arguments is dimension insensitive.

\bigskip

\section{Construction of approximate $N$-solitons}

\medskip

In this section we prove Theorem \ref{MT1}. Again, we will give a detailed proof in the one dimensional case $d=1$, and point out how to extend the proof in higher dimension, which is done in a similar fashion as in \cite{Martel1}. 

\subsection{The topological argument}

We continue with the same notation as in the previous section. In particular, we fix $\beta \in (-1,1)$ and consider now the time-dependent, boosted soliton given by
\[ 
Q_\beta(t,x) = Q(\gamma(x-\beta t)), \quad \gamma =(1-\beta^2)^{-1/2}. 
\]
Additionally, we suppose given $N$ different velocities $\bt_1,\ldots, \bt_N\in (-1,1)$, already arranged in such a way that
\be\label{velos}
 -1<\beta_1 <\beta_2 <\ldots < \beta_N<1, 
\ee
and $N$ translation parameters $x_1,\ldots,x_N \in \R$, such that  $Q_{\bt_j}(t,x-x_j)$ is the associated soliton solution of velocity $\bt_j$ and shift $x_j$, $j=1,\ldots, N$. 

\medskip

Finally, we introduce some notation. Given $\q B$ a real Banach space, $x \in B$ and $r \ge 0$, we denote 
\[
B_{\q B}(x,r) = \{ y \in \q B \ | \ \|x-y \|_{\q B} \le r \}
\]
 the \emph{closed} ball in $\q B$ centered  at $x$ of radius $r$ and $\|\cdot \|_{\q B}$ is the associated Banach norm on $\q B$.

\medskip

\begin{lem}[Modulation] \label{mod}
There exist $L_0> 0$ and $\e_0 >0$ such that the following holds for some $C>0$. 
For any $L \ge L_0$ and $0< \e < \e_0$, $t_0 \in \R$,  if $U \in H^1(\R) \times L^2(\R)$ is sufficiently near a sum of solitons whose centers are sufficiently far apart,
\[ 
\left\| U - \sum_{j=1}^N \left( \begin{array}{c} Q_{\beta_j} \\  \partial_{t}Q_{\beta_j} \end{array} \right) (t_0,\cdot -  y_j) \right\| \le \e, \quad \min \big\{ |  y_j -  y_i  | \ | \ i \ne j \big\} \ge L, 
\]
then there exist shifts $\tilde y_j=\tilde y_j(\bt_j,t_0)$ such that if we define
\begin{align}\label{defRj}
 & \tilde R_j(x)  :=\begin{pmatrix} Q_{\beta_j} \\  \partial_{t}Q_{\beta_j} \end{pmatrix}  (0, x- \tilde y_j), \\
 & \tilde R(x) := \sum_{j=1}^N \tilde R_j(x), \label{defR}\\
 &  V(x) := U(x) -\tilde R(x), \label{defV}
\end{align}
then
\be\label{Orthooo}
\| V \| \le C \e, \quad \text{and} \quad \langle V | (\tilde R_j)_x \rangle =0. 
\ee
Also, the map $U \mapsto (V,(\tilde y_j)_j)$ is a $\q C^1$-diffeomorphism around $\sum_{j=1}^N \begin{pmatrix} Q_{\beta_j} \\  \partial_{t}Q_{\beta_j} \end{pmatrix} (t_0, x- y_j)$. In such case, we say that $U$ can be modulated into $(V,(\tilde y_j)_j)$. 
\end{lem}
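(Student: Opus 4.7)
The plan is to apply the implicit function theorem to the map
\[ F : (H^1 \times L^2) \times \mathbb{R}^N \to \mathbb{R}^N, \qquad F_j(U,z) := \left\langle U - \sum_{k=1}^N R_k(\cdot - z_k) \,\middle|\, (R_j)_x(\cdot - z_j)\right\rangle, \]
where $R_k(x) := (Q_{\beta_k}(0,x),\, \partial_t Q_{\beta_k}(0,x))^T$. Using Lorentz invariance $Q_{\beta_k}(t_0,x-y_k) = Q_{\beta_k}(0, x - z_k^\star)$ with $z_k^\star := y_k + \beta_k t_0$, the hypothesis rewrites as $\|U - U^\star\| \le \varepsilon$ where $U^\star := \sum_k R_k(\cdot - z_k^\star)$, and it suffices to produce a unique $\tilde y = (\tilde y_j)_j$ near $z^\star$ solving $F(U,\tilde y) = 0$. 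At the reference point, $F(U^\star, z^\star) = 0$ trivially.

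First I would compute the differential in $z$ at the reference point: the second summand in the bracket vanishes, leaving
\[ \partial_{z_k} F_j(U^\star, z^\star) = \langle (R_k)_x(\cdot - z_k^\star) \,|\, (R_j)_x(\cdot - z_j^\star)\rangle. \]
For $j=k$ this is $\|(R_j)_x\|^2$, a strictly positive constant $c_j > 0$ independent of the shift by translation invariance. For $j \ne k$, the two factors are exponentially localized around the points $z_j^\star$ and $z_k^\star$, whose separation is controlled by $L$ once $L_0$ is chosen large enough compared to $\max|\beta_j|\cdot|t_0|$ is absorbed — concretely, the exponential decay of $Q$ and $Q'$ (hence of each $R_k$ and $(R_k)_x$, which are Schwartz) yields an off-diagonal bound $O(e^{-\delta L})$ for some $\delta>0$ depending only on $(\beta_j)_j$ and the decay rate $c_0$ from \eqref{Qminus}. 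Thus for $L\ge L_0$ large enough, $\partial_z F(U^\star,z^\star)$ is strictly diagonally dominant, hence invertible with operator norm of the inverse uniformly bounded independently of $z^\star$.

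Then I would apply a quantitative form of the implicit function theorem: there exist $\varepsilon_0, r > 0$ (depending only on the $c_j$ and the constant in the off-diagonal estimate, hence uniform in $(y_j)_j$ and $t_0$) such that for any $U \in B_{H^1\times L^2}(U^\star, \varepsilon_0)$ the equation $F(U,z)=0$ has a unique solution $z = \tilde y(U)$ with $|\tilde y(U) - z^\star| \le r$, the map $U \mapsto \tilde y(U)$ is $\mathcal{C}^1$, and
\[ |\tilde y(U) - z^\star| \le C \|U - U^\star\| \le C\varepsilon. \]
The bound $\|V\| \le C\varepsilon$ then follows from the triangle inequality
\[ \|V\| \le \|U - U^\star\| + \Big\|U^\star - \sum_k R_k(\cdot - \tilde y_k)\Big\| \le \varepsilon + C'|\tilde y - z^\star| \le C''\varepsilon, \]
the second bound by the mean value inequality applied to $z \mapsto \sum_k R_k(\cdot - z_k)$ (whose derivative $-\sum_k (R_k)_x$ is uniformly bounded in $H^1\times L^2$). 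Finally, the $\mathcal{C}^1$-diffeomorphism property comes from observing that the inverse map $(V,(\tilde y_j)_j) \mapsto V + \sum_j R_j(\cdot - \tilde y_j)$ is smooth, with the $\mathcal{C}^1$ regularity of the forward map $U \mapsto (V, (\tilde y_j))$ being an output of the implicit function theorem.

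The main obstacle is the uniformity of the estimates with respect to the base point: the constants $L_0, \varepsilon_0, C$ must not depend on $(y_j)_j$ nor on $t_0$. This is guaranteed because every quantity in the Jacobian analysis is either translation invariant (the diagonal entries and norms of $R_j, (R_j)_x$) or controlled purely by the pairwise separation of the centers through exponential decay — no other feature of $(y_j, t_0)$ enters. A secondary point to justify cleanly is that the hypothesis $|y_i-y_j|\ge L$ indeed implies a pairwise separation of the shifted centers $z_j^\star = y_j + \beta_j t_0$ of the same order; this will be used in the global scheme by combining $L$ large with $t_0$ large (distinct $\beta_j$'s pulling the $z_j^\star$ apart), or can be absorbed by enlarging $L_0$, and does not affect the IFT argument itself.
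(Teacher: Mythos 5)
Your proposal is the standard implicit-function-theorem argument for modulation, which is exactly what the paper gestures at by citing \cite{CMM,We1,We2} instead of giving a proof. The set-up of $F$, the Jacobian computation at the reference point (diagonal entries $\|(R_j)_x\|^2>0$ uniformly, off-diagonal entries $O(e^{-\delta L'})$ by exponential decay), the diagonal dominance, the quantitative IFT, and the derivation of $\|V\|\le C\varepsilon$ via the triangle inequality and a mean-value bound are all correct and match the classical scheme. The $\q C^1$-diffeomorphism statement is handled correctly by noting that both the forward map (IFT output) and the inverse $(V,(\tilde y_j)_j)\mapsto V+\sum_j R_j(\cdot-\tilde y_j)$ are $\q C^1$.

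The one point you should be more careful about is the final paragraph. You correctly observe that the paper's separation hypothesis is stated on the parameters $y_j$, whereas after the reduction $Q_{\beta_j}(t_0,x-y_j)=Q_{\beta_j}(0,x-z_j^\star)$ what the IFT actually requires is separation of the true spatial centers $z_j^\star=y_j+\beta_j t_0$. But your proposed fix ``can be absorbed by enlarging $L_0$'' does not work in general: since $z_j^\star-z_i^\star=(y_j-y_i)+(\beta_j-\beta_i)t_0$ and $t_0\in\R$ is arbitrary in the lemma's statement, one can make the two terms cancel no matter how large $|y_j-y_i|$ is (take $t_0$ negative and of order $L$). The statement as written is thus slightly imprecise; the honest fix is either to place the separation hypothesis directly on the $z_j^\star$'s, or to observe --- as you also suggest --- that in the paper's only use of the lemma one has $t_0=0$ (the shrinking set $\q V(t)$ is built around $R(t,x)=\sum_j(Q_{\beta_j},\partial_t Q_{\beta_j})^T(0,x-y_j(t))$), so that $z_j^\star=y_j$ and no translation issue arises. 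You should drop the claim that enlarging $L_0$ alone resolves it, and simply record that in the application $t_0=0$ and the $y_j$ coincide with the centers.
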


\begin{proof} This is the classical modulation result, stated as in \cite[Lemma
    2]{CMM}. See \cite{We1,We2} for more details.
\end{proof}

In what follows, we introduce additional notation. We assume that $U$ can be modulated into $(V, (\tilde y_j)_j)$. 

For any $j=1,\ldots, N$ (cf. Proposition \ref{PropH} and Lemma \ref{Z} for the definitions), let
\be\label{EigenJ}
\begin{cases}
Z_{\pm,j}(s) := Z_{\pm}(\ga_j (s- \tilde y_j)),  \quad Z_{0,\beta_j}(s) := Z_{0}(\ga_j (s -\tilde y_j)), \\
\Phi_{0,j}(s) := \Phi_0(\ga_j (s - \tilde y_j)), \quad \Phi_{-,j}(s) := \Phi_-(\ga_j (s- \tilde y_j)), 
\end{cases}
\ee
where $\ga_j:=(1-\bt_j^2)^{-1/2}$, and
\be\label{def:a}
\begin{cases}
a_{\pm,j} := \langle V | Z_{\mp,j} \rangle, \\
a_{0,j} := \langle V | Z_{0,j} \rangle,
\end{cases}
\ee
along with the vectors 
\be\label{avectors}
\boldsymbol a_+ = (a_{+,j})_j, \quad \boldsymbol a_- = (a_{-,j})_j, \quad \boldsymbol a_{0} = (a_{0,j})_j,  \quad \hbox{and} \quad   \boldsymbol{\tilde y} = (\tilde y_{j})_j .
\ee
Finally, we fix a constant $\ga_0$ given by
\bea\label{ga0}
\ga_0 &:= & \min\Big\{ \frac 14\sqrt{\la_0}   \min \{ \frac 1\ga_1, \frac1\ga_2,\ldots,\frac1\ga_{N} \}  , \nonu\\
& &  \qquad \frac 14 \min \{ \ga_1, \ga_2,\ldots,\ga_{N} \} \min \{ \bt_1, \bt_2-\bt_1,\ldots,\bt_N-\bt_{N-1} \} \Big\}>0.
\eea
Assume now $\ve \in (0,\ve_0)$ and $L\geq L_0$, where $\ve_0$ and $L_0$ are obtained in by Lemma \ref{mod}. Given $t\in \R$, let us consider the \emph{centers}
\[
 y_j = y_j(t):= \beta_j t + x_{j}, \quad j=1,\ldots,N,
\] 
where the velocities $\bt_j$ and the shifts $x_j$ are given by \eqref{velos}. It is clear that there exists $T_0 \in \R$ such that, for all $t\geq T_0$, the $ y_j$ satisfy 
\[
\min \big\{ | y_j - y_i| \ |  \ i\neq j \big\} \geq L.
\]
From now on, we fix $t\geq T_0$. Consider the corresponding sum of solitons $ R(t,x)$ associated to these parameters, namely
\be\label{Rdt}
R(t,x) := \sum_{j=1}^N R_j(t,x) = \sum_{j=1}^N \left( \begin{array}{c} Q_{\beta_j} \\  \partial_{t}Q_{\beta_j} \end{array} \right) (0, x-  y_j).
\ee
Then, according to Lemma \ref{mod}, if $U \in H^1(\R) \times L^2(\R)$ satisfies $\| U - R(t) \| \le \e$, then $U$ can be modulated. Moreover, up to increasing $T_0$, we can assume that $e^{-\ga_0 T_0} < \e_0$. Thus we can define our shrinking set.

\begin{defi}[Shrinking set $\q V(t)$]
For $t \ge T_0$, we define the set 
\[ \q V(t) \subset B_{H^1 \times L^2}(R(t), \e_0) \]
in the following way: $U \in \q V(t)$ if and only if $U$ can be modulated into $(V,\boldsymbol{\tilde y})$ where (cf. \eqref{defRj} and \eqref{avectors})
\[
V(t) = U - \sum_{j=1}^N \tilde R_j(t), 
\]
with
\begin{gather}\label{hyp}
\| V(t) \| \le e^{-\ga_0 t}, \qquad | \tilde y_{j}(t) - \beta_j t - x_j| \le e^{-\ga_0 t}, \\
\smallskip
\| \boldsymbol a_+ \|_{\ell^2} \le e^{-3\gamma_0 t/2}, \quad \| \boldsymbol a_- \|_{\ell^2} \le e^{-3\gamma_0 t/2}, \quad \| \boldsymbol a_0 \|_{\ell^2} \le e^{-3\gamma_0 t/2}. \label{hyp2}
\end{gather}
\end{defi}

\begin{defi}
We denote by $\varphi = (u,\partial_t u)^T$ the flow of the (NLKG) equation, that is, given $S_0 \in \R$ and $U_0 \in H^1(\R) \times L^2(\R)$, 
\be\label{phi}
t \mapsto \varphi(S_0,t,U_0)
\ee
 is the solution to (NLKG) with initial data $U_0$ at time $S_0$
 (with values in $H^1 \times L^2$). 
\end{defi}

In most of what we do, we will have $t \le S_0$ so that $U_0$ can be thought of as a final data, and we work backwards in time. The key result of this section is the following construction of an approximate $N$-soliton.

\begin{prop}[Approximate $N$-soliton] \label{prop:app_Nsol}
There exist $T_0 >0$ such that the following holds. For any $S_0 \ge T_0$, there exist a final data $U_0$ such that
\[ \forall t \in [T_0,S_0], \quad \varphi(S_0,t,U_0) \in \q V(t). \]
\end{prop}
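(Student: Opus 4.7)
The plan is a backward-in-time topological scheme, following Côte-Martel-Merle \cite{CMM}. Fix $S_0 \ge T_0$ large, and consider a family of final data
$$U_0(\boldsymbol b) := R(S_0) + \sum_{j=1}^{N} b_j\, \eta_j(S_0), \qquad \boldsymbol b \in \overline{B_{\m R^N}\bigl(0,\, e^{-3\gamma_0 S_0/2}\bigr)},$$
where $\eta_j \in \Span(Z_{+,j}, Z_{-,j})$ is chosen so that the map $\boldsymbol b \mapsto \boldsymbol a_\star(S_0, \boldsymbol b)$ has invertible linear part, $\boldsymbol a_\star$ denoting the backward-unstable one among $\boldsymbol a_+, \boldsymbol a_-$ (that is, the component associated to the $-\sqrt{\lambda_0}/\gamma_j$ eigendirection of $\q H_j$ after time-reversal). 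Let $U(t,\boldsymbol b) := \varphi(S_0,t,U_0(\boldsymbol b))$ and, whenever $U(t,\boldsymbol b) \in B_{H^1 \times L^2}(R(t), \varepsilon_0)$, modulate it via Lemma \ref{mod} into $(V, \tilde{\boldsymbol y})$ and read off $\boldsymbol a_\pm, \boldsymbol a_0$ from \eqref{def:a}. Introduce the exit time $T^*(\boldsymbol b) := \inf \{ T \in [T_0, S_0] : \forall t \in [T, S_0],\; U(t, \boldsymbol b) \in \q V(t)\}$. The objective is to exhibit $\boldsymbol b_\star$ with $T^*(\boldsymbol b_\star) = T_0$.

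The first half is a bootstrap improving every condition defining $\q V(t)$ on $[T^*(\boldsymbol b), S_0]$, except possibly the one on $\boldsymbol a_\star$. Four sub-steps are needed. (i) \emph{Modulation ODE}: differentiating the orthogonality $\langle V | (\tilde R_j)_x \rangle = 0$ along the NLKG equation yields $|\dot{\tilde y}_j - \beta_j| \lesssim \|V\|^2 + e^{-2\gamma_0 t}$, whose backward integration from $S_0$ strictly improves $|\tilde y_j(t) - \beta_j t - x_j| \le \tfrac 12 e^{-\gamma_0 t}$. (ii) \emph{Dynamics of $\boldsymbol a_\pm, \boldsymbol a_0$}: from $V_t \approx \q H_j V$ (modulo nonlinear, interaction and modulation corrections) and Lemma \ref{Z}, one obtains a linear system whose main part is diagonalisable with spectrum $\{\pm \sqrt{\lambda_0}/\gamma_j\} \cup \{0\}$; backward Gronwall on the backward-stable and null components strictly improves their bounds, since $3\gamma_0/2 < \sqrt{\lambda_0}/\gamma_j$ by \eqref{ga0}, leaving only $\boldsymbol a_\star$ uncontrolled. (iii) \emph{Energy estimate}: assemble a localised Lyapunov functional
$$\q F[V](t) := \sum_{j=1}^N \int \chi_j(t,x)\, e_j[V](t,x)\, dx,$$
with $\chi_j$ a smooth partition of unity travelling with the $j$-th soliton at speed $\beta_j$, and $e_j$ a local energy-momentum density whose quadratic part in $V$ reproduces $\langle H_j V | V \rangle$, written in the manifestly regularity-compatible form $(L_{\beta_j}^+ v_1 | v_1) + \|\beta_j \partial_x v_1 + v_2\|_{L^2}^2$ of \eqref{Comp1}. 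Proposition \ref{prop:22}, applied in the Lorentz frame of each soliton together with the modulation orthogonality $\langle V | \Phi_{0,j}\rangle \approx 0$ and the bounds on $\boldsymbol a_\pm, \boldsymbol a_0$, yields the coercivity $\q F(t) \ge \mu_1 \|V(t)\|^2 - C e^{-3\gamma_0 t}$. A direct computation gives $|\dot{\q F}| \lesssim e^{-\gamma_0 t}\|V\|^2 + e^{-2\gamma_0 t}$, the cross-interaction terms decaying exponentially in the soliton separation $|y_j - y_i| \gtrsim t$. Integrating backward from $S_0$, where $\|V(S_0)\| \lesssim \|\boldsymbol b\| \le e^{-3\gamma_0 S_0/2}$, strictly improves $\|V(t)\| \le \tfrac 12 e^{-\gamma_0 t}$.

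The second half is a Brouwer no-retraction argument. If $T^*(\boldsymbol b) > T_0$ then by (i)--(iii) the only saturated constraint at $T^*$ is $\|\boldsymbol a_\star(T^*)\| = e^{-3\gamma_0 T^*/2}$. The $-\sqrt{\lambda_0}/\gamma_j$ eigendynamics provide strict outgoing transversality at the boundary of this constraint, which gives continuity of $\boldsymbol b \mapsto T^*(\boldsymbol b)$. The rescaled exit map
$$\Psi(\boldsymbol b) := e^{3\gamma_0 T^*(\boldsymbol b)/2}\, \boldsymbol a_\star\bigl(T^*(\boldsymbol b)\bigr)$$
takes values in the unit sphere $S^{N-1} \subset \m R^N$, and by the invertibility of $\boldsymbol b \mapsto \boldsymbol a_\star(S_0, \boldsymbol b)$ (diagonally dominant at leading order thanks to the soliton separation) is homotopic to the identity when restricted to the boundary of the parameter ball. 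This produces a continuous retraction of the ball onto its sphere, contradicting Brouwer's no-retraction theorem, and forces $T^*(\boldsymbol b_\star) = T_0$ for some $\boldsymbol b_\star$.

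The principal obstacle is the construction of the Lyapunov functional in (iii). Its quadratic part must equal $\langle H_j V | V \rangle$ locally near each soliton, so that Proposition \ref{prop:22}'s coercivity applies, while its time derivative along the NLKG flow must be controllable at the regularity $(u, \partial_t u) \in H^1 \times L^2$, hence without involving the mixed derivative $\partial_{tx} u$. The squared-form rewriting from \eqref{Comp1} resolves this at the level of the quadratic form, but one must then carefully introduce the time-dependent cut-offs $\chi_j$ so that every term produced by differentiation stays at the energy level and the error terms only pick up $\chi_j'$, which is exponentially small in the separation. This is the delicate bookkeeping flagged in the introduction; once accomplished, the Gronwall and Brouwer arguments above proceed as sketched.
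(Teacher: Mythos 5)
Your overall scheme matches the paper's: construct final data controlled on the backward-unstable directions, bootstrap all other quantities of the shrinking set $\q V(t)$, and close with a Brouwer argument; the identification of $\boldsymbol a_+$ (tested against $Z_{-,j}$, the $-\sqrt{\la_0}/\ga_j$ eigendirection of $\q H_j$) as the backward-unstable component is correct. However, two of the steps you sketch do not quite go through as written, and they are precisely where the paper has to do work.

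First, the Lyapunov functional. You flag it as the principal obstacle, rightly, but your proposed resolution --- localizing the quadratic form in the form $(L^+_{\beta_j}v_1|v_1)+\|\beta_j\partial_x v_1 + v_2\|_{L^2}^2$ from \eqref{Comp1} --- does not by itself cure the regularity problem. Differentiating $\int\chi_j(\beta_j\partial_x v_1 + v_2)^2$ in time produces $\beta_j\partial_x\partial_t v_1 = \beta_j\partial_x v_2 + \cdots$, and $\partial_x v_2\in H^{-1}$ pairs against an $L^2$ quantity with no obvious cancellation; so one still has to avoid ever taking a spatial derivative of the second component. The paper's mechanism is different: take the exactly conserved energy $E[\varphi]$ together with the \emph{localized momenta} $P_j[\varphi]=\tfrac 12\int\phi_j\, u_t u_x$ of the \emph{full solution} $\varphi=(u,u_t)$, and compute $\partial_t P_j$ directly using \eqref{NLKG}. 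After integration by parts, every term in $\partial_t P_j$ is a quadratic expression in $(u,u_t,u_x)$ hit by $(\phi_j)_x$ or $(\phi_j)_t$ --- no $u_{tx}$ survives --- and is small because the cut-offs' derivatives are supported in the inter-soliton gaps. The quadratic form $\sum_j \langle H_j V|V\rangle$ then appears only in the \emph{static} expansion of $\q F[\varphi]=E+2\sum_j\beta_j P_j$ around $\tilde R$, where \eqref{Comp1}'s completed-square form is indeed what makes Proposition~\ref{prop:22} applicable. Constructing the functional at the level of $\varphi$, not of $V$, is the key; without it, the energy estimate in your sub-step (iii) is not justified.

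Second, the final-data parametrization and the Brouwer step. Choosing $\eta_j$ so that $\boldsymbol b\mapsto\boldsymbol a_\star(S_0,\boldsymbol b)$ merely has \emph{invertible linear part} is not enough: on the boundary $\|\boldsymbol b\|=e^{-3\gamma_0 S_0/2}$ you would then have $\|\boldsymbol a_\star(S_0)\|$ of the right order but not \emph{equal} to $e^{-3\gamma_0 S_0/2}$, so such data need not lie in $\q V(S_0)$ at all (if the factor exceeds $1$) or need not lie on the exit sphere (if it is below $1$); in either case the map $\Psi$ restricted to $\partial B$ is not the identity, and ``homotopic to the identity'' does not produce a retraction in the sense of Brouwer's theorem without further work. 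The paper resolves this by the implicit function theorem (Lemma~\ref{ModData}): one inverts the $\q C^1$ map $\boldsymbol{\gh b}_\pm\mapsto\boldsymbol{\gh a}_\pm$ to prescribe $\boldsymbol a_+(S_0)=e^{-3\gamma_0 S_0/2}\boldsymbol{\gh a}_+$ \emph{exactly} with $\boldsymbol a_-(S_0)=\boldsymbol a_0(S_0)=0$, so that boundary parameters land exactly on the exit sphere, the transversality of Proposition~\ref{prop:flow} gives instantaneous exit, and $\Psi|_{\m S^{N-1}}=\Id$ precisely. Finally, a small inaccuracy in your sub-step (i): the modulation ODE gives $|\tilde x_j'(t)|\le C(\|V(t)\|+e^{-3\gamma_0 t})$, linear in $\|V\|$, not quadratic; the cross term $\langle(\tilde R_j)_x|\q L_j V\rangle$ is a genuine first-order contribution. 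The bootstrap still closes because $\|V\|$ carries a $1/\sqrt L$ gain from the energy estimate, but the bound as you stated it is not correct.
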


At this point, the solution $\phi(S_0,t,U_0)$ depends on $S_0$. To prove Theorem \ref{MT1}, we will need to derive such a solution independent of $S_0$, which we will do via a compactness argument in the next (and last) section \ref{sec:4}.

\medskip

Our goal is now to prove Proposition \ref{prop:app_Nsol}.

\medskip

Fix $S_0 \ge T_0$. Consider an initial data $U_0$ at time $S_0$ such that $U_0 \in \q V(S_0)$. Due to the blow-up criterion for (NLKG), and the fact that $R(t)$ defined in \eqref{Rdt} is bounded in $H^1(\R) \times L^2(\R)$, we have that $\varphi(S_0,t,U_0)$ is defined at least as long as it belongs to $B_{H^1\times L^2}(R(t),1)$. In particular, $\varphi(S_0,t,U_0)$ does not blow-up as long as it belongs to $\q V(t)$, and we can define the (backward) exit time
\[ 
T^*(U_0) := \inf \left\{ T \in [T_0,S_0] \ \middle| \ \forall t \in [T,S_0], \ \varphi(S_0,t,U_0) \in \q V(t) \right\}. 
\]
Notice that we could have $T^*(U_0) = S_0$. Our goal is to find $U_0 \in \q V(T_0)$ such that $T^*(U_0) = T_0$.

\medskip

In order to show such an assertion, we will only consider some very specific initial data, namely $U_0 \in \q V(S_0)$ such that  (see \eqref{avectors})
\begin{itemize}
\item $U_0 \in R(S_0) + \Span((Z_{\pm,j})_{j =1, \dots, N})$,
\smallskip

\item $\boldsymbol a_-(S_0) = \boldsymbol a_0(S_0) =0$, and 

\smallskip

\item $\boldsymbol a_+(S_0) \in B_{\m R^N}(0,e^{-3\ga_0 S_0/2})$.
\end{itemize}
These conditions can be satisfied due to the almost orthogonality of $Z_{\pm,j}$, $Z_{0,j}$, and this is the content of the following

\begin{lem}[Modulated final data]\label{ModData}
Let $S_0 \ge T_0$ be large enough. There a exist a  $\q C^1$ map $\Theta: B_{\R^N}(0, 1) \to \q V(S_0)$ as follows. Given $\boldsymbol{\gh a}_+ = (\gh a_{+,j})_j \in B_{\R^N}(0, 1)$, $U_0 =: \Theta (\boldsymbol{\gh a}_+) \in \q V(S_0)$ such that $U_0$ can be modulated into $(V_0,\boldsymbol{\tilde y})$ and the associated parameters \eqref{avectors} satisfy 
\be\label{avectors2}
\boldsymbol a_+(S_0) = e^{-3\gamma_0 T_0/2} \boldsymbol{\gh a}_+, \quad \boldsymbol a_- (S_0)= \boldsymbol a_0(S_0) =0.
 \ee
Moreover,
\be\label{V0}
\|V_0\| \leq Ce^{-3\gamma_0 T_0/2}.
\ee
\end{lem}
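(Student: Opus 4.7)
The strategy is to construct $U_0$ via the implicit function theorem, as a small perturbation of $R(S_0)$ in a finite-dimensional subspace adapted to the spectral decomposition of $\mathcal{H}$.

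I would set
\[
U_0(\boldsymbol\mu) := R(S_0) + \sum_{j=1}^N \bigl( \mu_{+,j}\, Z^0_{+,j} + \mu_{-,j}\, Z^0_{-,j} + \mu_{0,j}\, Z^0_{0,j} \bigr), \quad \boldsymbol\mu \in \R^{3N},
\]
where $Z^0_{\alpha,j}$ denotes the eigenfunction from Lemma \ref{Z} (at velocity $\beta_j$) translated so as to be centered at $y_j := \beta_j S_0 + x_j$, with $\|\boldsymbol\mu\|$ small. The $Z^0_{0,j}$ components have to be included because the modulation orthogonality $\langle V_0|(\tilde R_j)_x\rangle = \langle V_0|JZ_{0,j}\rangle = 0$ does not a priori force $a_{0,j} = \langle V_0|Z_{0,j}\rangle = 0$. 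For $\|\boldsymbol\mu\|$ small enough and $S_0$ large, Lemma \ref{mod} supplies $\mathcal{C}^1$ maps $\boldsymbol\mu\mapsto V_0(\boldsymbol\mu),\tilde{\boldsymbol y}(\boldsymbol\mu)$, and composing with \eqref{def:a} yields a $\mathcal{C}^1$ map $\Xi\colon \boldsymbol\mu \mapsto (\boldsymbol a_+, \boldsymbol a_-, \boldsymbol a_0) \in \R^{3N}$, with $\Xi(0)=0$. The goal is then to invert $\Xi$ around the origin.

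The key computation is that $D\Xi(0)$ is invertible. Expanding $\tilde R_k(x) = R_k(x) - (\tilde y_k-y_k)(R_k)_x + O(|\tilde y_k-y_k|^2)$ and using $(R_k)_x = JZ^0_{0,k}$, one obtains $V_0 = W + \sum_k(\tilde y_k-y_k)JZ^0_{0,k} + O(\|\boldsymbol\mu\|^2)$, where $W$ is the perturbation above. A crucial observation is that $\langle Z_\pm|JZ_0\rangle = \langle Z_0|JZ_0\rangle = 0$: indeed $JZ_0 = \Phi_0$ is an eigenfunction of $\mathcal{H}^* = JH$ with eigenvalue $0$ (since $H\Phi_0 = 0$), while $Z_\pm$ are eigenfunctions of $\mathcal{H}$ with nonzero eigenvalues $\pm\sqrt{\lambda_0}/\gamma$, so the standard orthogonality of eigenfunctions of $\mathcal{H}$ and $\mathcal{H}^*$ at different eigenvalues forces $\langle Z_\pm|JZ_0\rangle = 0$; and $\langle Z_0|JZ_0\rangle = 0$ is immediate from \eqref{decr2}. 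Combined with the exponential decay of the $Z$'s and the separation $|y_i-y_j|\gtrsim S_0$, which yields $\langle Z^0_{\alpha,i}|Z^0_{\beta,j}\rangle = O(e^{-cS_0})$ for $i\ne j$, the modulation condition gives $\tilde y_k - y_k = O(e^{-cS_0})\|\boldsymbol\mu\|$, whence $V_0 = W + O(e^{-cS_0}\|\boldsymbol\mu\|)$.

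Consequently $D\Xi(0)$ is block diagonal up to $O(e^{-cS_0})$ off-diagonal entries, with each diagonal $3\times 3$ block $M_j$ given by $(M_j)_{\alpha'\alpha} = \langle Z^0_{\alpha,j}|Z^0_{\mathrm{opp}(\alpha'),j}\rangle$, where $\mathrm{opp}(+)=-$, $\mathrm{opp}(-)=+$, $\mathrm{opp}(0)=0$. Thus $M_j$ is (up to swapping its first two rows) the Gram matrix $G_j$ of the triple $\{Z^0_{+,j}, Z^0_{-,j}, Z^0_{0,j}\}$, so $\det M_j = -\det G_j$. Since eigenfunctions of $\mathcal{H}$ at distinct eigenvalues are linearly independent, $G_j$ is positive definite and $|\det M_j|$ is bounded below uniformly for $\beta_j$ in a compact subset of $(-1,1)$ (a sanity check at $\beta=0$, using the explicit formulas $Z_\pm = Q^-(\pm\sqrt{\lambda_0},1)^T$, $Z_0 = (0,Q')^T$ of Proposition \ref{prop:22}, gives $\det M_j = -4\lambda_0\|Q^-\|^4\|Q'\|^2$). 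For $S_0$ large enough, $D\Xi(0)$ is therefore invertible, and the inverse function theorem produces a $\mathcal{C}^1$ map $\boldsymbol{\gh a}_+\mapsto \boldsymbol\mu(\boldsymbol{\gh a}_+)$ with $\|\boldsymbol\mu\|\lesssim e^{-3\gamma_0 T_0/2}\|\boldsymbol{\gh a}_+\|$. Setting $\Theta(\boldsymbol{\gh a}_+) := U_0(\boldsymbol\mu(\boldsymbol{\gh a}_+))$, the bound \eqref{V0} follows from $\|V_0\|\le C\|\boldsymbol\mu\|$, and the remaining conditions for $U_0\in\mathcal{V}(S_0)$ stated in \eqref{hyp}--\eqref{hyp2} hold for $T_0$ (hence $S_0$) chosen large enough. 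The main obstacle is establishing the structural orthogonalities that make the Jacobian essentially the spectral Gram matrix; once these are in place, the rest is standard bookkeeping of exponentially small cross-soliton corrections.
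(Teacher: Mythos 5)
Your proof is correct, and it actually diverges from the paper's in a way worth flagging. The paper's sketch constructs $U_0 = R(S_0) + \sum_{\pm,j} \gh b_{\pm,j} Z_{\pm,j}$ and inverts the $2N \to 2N$ map $\boldsymbol{\gh b}_\pm \mapsto \boldsymbol{a}_\pm$, citing \cite{CMM} for details; it does not perturb in the $Z_{0,j}$ directions at all. You instead use $3N$ parameters, including the $\mu_{0,j} Z_{0,j}^0$ terms, and invert a $3N\to 3N$ map. Your motivation for this is exactly right and is the substantive point: the modulation constraint is $\langle V_0 \,|\, J Z_{0,j}\rangle = 0$, which by the $\mathcal{H}/\mathcal{H}^*$ biorthogonality is compatible with $\langle Z_{\pm} | JZ_0\rangle = 0$, but the quantity the lemma asserts to vanish is $a_{0,j} = \langle V_0 \,|\, Z_{0,j}\rangle$, and there is no spectral reason for $\langle Z_\pm | Z_0\rangle$ to vanish (at $\beta=0$ it does by parity, but for $\beta\neq 0$ a direct computation with \eqref{decr1}--\eqref{decr2} shows it is generically nonzero). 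Consequently a $2N$-parameter perturbation along $Z_{\pm,j}$ alone produces $a_{0,j}(S_0)$ of size $O(e^{-3\gamma_0 T_0/2})$, which is far too large to place $U_0$ in $\mathcal{V}(S_0)$ (which requires $\| \boldsymbol a_0\|_{\ell^2} \le e^{-3\gamma_0 S_0/2}$) and is not killed by the modulation. Your argument fixes this by adjusting $\mu_{0,j}$, and the Jacobian computation --- block diagonal up to $O(e^{-cS_0})$ cross-soliton terms, each block a row-permuted Gram matrix of the linearly independent eigenfunctions $\{Z_{+,j},Z_{-,j},Z_{0,j}\}$, hence with $|\det M_j|$ bounded below --- correctly establishes invertibility. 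Your $\beta=0$ sanity check $\det M_j = -4\lambda_0\|Q^-\|_{L^2}^4\|Q'\|_{L^2}^2$ is right. One very minor point: what the map $\Theta$ must ultimately invert is $\boldsymbol{\gh a}_+ \mapsto \boldsymbol a_+$ subject to $\boldsymbol a_- = \boldsymbol a_0 = 0$, i.e.\ you should phrase the conclusion as applying the inverse function theorem to $\Xi$ and then restricting to the slice $\{(\boldsymbol a_+,0,0)\}$; this is what you do implicitly and it is standard bookkeeping. Overall your version supplies an ingredient the paper's one-line sketch leaves out.
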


\begin{proof}
The main idea is to consider the map $B_{\R^{2N}}(0, 1) \to B_{\R^{2N}}(0, 1)$, $\boldsymbol{\gh b}_{\pm} \mapsto \boldsymbol{\gh a}_{\pm}$, where $\boldsymbol{\gh a}_{\pm}$ corresponds to the data $U_0 = R(S_0) + \sum_{\pm,j} \gh b_{\pm,j} Z_{\pm,j}$, and to invoke the implicit mapping theorem. We refer to \cite[Lemma 3]{CMM} and its proof in \cite[Appendix A]{CMM} for full details.
\end{proof}

If $T^*:=T^*(U_0)>T_0$, by maximality, we also have that for the function $\varphi(S_0,
T^*,U_0)$, at least one of the inequalities in the definition of $\q
V(T^*)$ is actually an equality. It turns out that the equality is achieved
by $\boldsymbol a_+(T_0)$ only, and that the rescaled quantity $e^{3\ga_0
  T^*/2} \boldsymbol a_+(t)$ is \emph{transverse} to the sphere at $t=
T^*$. This is at the heart of the proof and is the content of the following

\begin{prop}\label{prop:flow}
Let $\boldsymbol{\gh a}_+ \in B_{\R^N}(0,1)$, and assume that its maximal exit time is (strictly) greater that $T_0$:
\[ T^* = T^*(\Theta(\boldsymbol{\gh a}_+)) > T_0. \]
Denote, for all $t \in [T^*,S_0]$, the associated modulation $(V(t), \tilde{\boldsymbol y}(t))$  of $\varphi(t,T_0,\Theta(\boldsymbol{\gh a}_+))$, defined in (\ref{phi}). Then, for all $t \in [T^*,S_0]$,
\begin{gather} 
\| V(t) \| \le \frac{1}{2} e^{-\ga_0 t}, \quad | \tilde y_j(t) - \beta_j t - x_{j} | \le \frac12 e^{-\ga_0 t}, \label{fin1}\\
 \| \boldsymbol a_- (t) \|_{\ell^2} \le \frac{1}{2} e^{-3\ga_0 t/2}, \quad \| \boldsymbol a_0 (t) \|_{\ell^2} \le \frac{1}{2} e^{-3\ga_0 t/2},\label{fin2}
\end{gather}
and
\be\label{last1} 
\| \boldsymbol a_+ (T^*)\|_{\ell^2} = e^{-3\ga_0 T^*/2}.
\ee
Furthermore, $\boldsymbol a_+(T^*)$ is transverse to the sphere, i.e., 
\[
 \left. \frac{d}{dt} ( e^{3\gamma_0 t} \| \boldsymbol a_+(t) \|_{\ell^2}^2)
 \right|_{t = T^*} < 0.  
\]
\end{prop}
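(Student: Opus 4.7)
The plan is a bootstrap combined with a Lyapunov control on $\|V\|$ and an ODE analysis for the spectral coordinates $\boldsymbol{a}_\pm, \boldsymbol{a}_0$. By maximality of $T^*$, the bootstrap assumptions \eqref{hyp}-\eqref{hyp2} hold on $[T^*, S_0]$, and the aim is to show that every bound, except possibly the one on $\|\boldsymbol{a}_+\|_{\ell^2}$, is strictly improved by a factor $\tfrac12$; then \eqref{last1} will follow from the definition of $T^*$, and the transversality from a direct ODE computation. Writing $U = \tilde R + V$ in \eqref{NLKG} and differentiating the modulation orthogonalities $\langle V | (\tilde R_j)_x \rangle = 0$ yields $\dot{\tilde y}_j = \beta_j + O(\|V\| + e^{-2\gamma_0 t})$, the interaction error coming from the exponential localization of $Q_{\beta_j}$ and the velocity separation \eqref{velos}; backward integration from $S_0$ with the bootstrap then improves the bound on $|\tilde y_j - \beta_j t - x_j|$.

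Next, I would derive evolution equations for the spectral coordinates. Writing the equation satisfied by $V$ as a perturbation of the linearized (NLKG) flow around the $j$-th soliton in its moving frame (which involves $\q H_j$), and differentiating $\langle V | Z_{\mp,j} \rangle$, $\langle V | Z_{0,j} \rangle$ using $\q H Z_\pm = \pm (\sqrt{\lambda_0}/\gamma_j) Z_\pm$, one obtains
\[ \dot a_{\pm,j} = \mp \frac{\sqrt{\lambda_0}}{\gamma_j} a_{\pm,j} + E_{\pm,j}, \qquad \dot a_{0,j} = E_{0,j}, \]
with errors $E_{\bullet,j} = O(\|V\|^2 + e^{-2\gamma_0 t})$ arising from the nonlinearity, from $\dot{\tilde y}_j - \beta_j$, and from soliton-soliton interactions. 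The sign convention $a_{+,j} = \langle V | Z_{-,j} \rangle$ is designed so that $a_+$ is the \emph{stable} mode forward (at $\beta = 0$, a direct computation gives $\dot a_+ = -\sqrt{\lambda_0}\, a_+ + E$), which is why $a_+$ is the only mode one cannot control backward from the final condition alone. On the other hand, $\boldsymbol a_-$ (unstable forward, hence decaying backward) and the neutral mode $\boldsymbol a_0$, starting from $\boldsymbol a_-(S_0) = \boldsymbol a_0(S_0) = 0$, are controlled: a backward Duhamel argument with the bootstrap bound $|E_{\bullet,j}| \le C e^{-2\gamma_0 t}$ yields $\|\boldsymbol a_-(t)\|_{\ell^2} + \|\boldsymbol a_0(t)\|_{\ell^2} \le C e^{-2\gamma_0 t} \ll \tfrac12 e^{-3\gamma_0 t/2}$ for $t \ge T_0$ large.

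The control of $\|V\|$ rests on a localized Lyapunov functional of the form $\q F(t) = \sum_{j=1}^N \int \psi_j(t,x)\, d_j[V](t,x)\, dx$, where $d_j[V]$ is the pointwise quadratic density associated with $\langle H_j V | V \rangle$ (with $H_j$ the operator of \eqref{H} around the $j$-th soliton) and $\psi_j$ are cutoffs traveling with velocity $\beta_j$ and separating the different solitons, in the spirit of \cite{Merle1,Martel1,MM06,CMM}. The velocity separation makes the cross terms between distinct solitons $O(e^{-2\gamma_0 t})$, and Proposition \ref{prop:22} applied to each $H_j$, together with the modulation orthogonalities, gives the coercivity
\[ \q F(t) \ge \mu_0 \|V(t)\|^2 - C\bigl(\|\boldsymbol a_+(t)\|_{\ell^2}^2 + \|\boldsymbol a_-(t)\|_{\ell^2}^2\bigr) - O(e^{-2\gamma_0 t}). \]
Differentiating $\q F$ in time, the quadratic-in-$V$ contributions come essentially from the time dependence of the $\psi_j$ (the rest vanishes on a true soliton by the extremality of the action $E - \beta_j P$) and are cubic in $V$, up to interaction terms. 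Backward integration from $S_0$, using $\|V(S_0)\| = O(e^{-3\gamma_0 S_0/2})$ from Lemma \ref{ModData}, yields $\|V(t)\|^2 \le C e^{-3\gamma_0 t}$, which improves the bootstrap $\|V(t)\| \le e^{-\gamma_0 t}$ by a large factor for $T_0$ large.

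With every bound except the one on $\|\boldsymbol a_+\|_{\ell^2}$ strictly improved, the definition of $T^*$ as the exit time forces \eqref{last1}. Transversality then follows from the ODE on $a_+$: at $T^*$ one has $\|\boldsymbol{a}_+(T^*)\|_{\ell^2}^2 = e^{-3\gamma_0 T^*}$, and
\[ \left.\frac{d}{dt}\left(e^{3\gamma_0 t}\|\boldsymbol{a}_+(t)\|_{\ell^2}^2\right)\right|_{T^*} = e^{3\gamma_0 T^*} \sum_{j=1}^N \left(3\gamma_0 - \frac{2\sqrt{\lambda_0}}{\gamma_j}\right) a_{+,j}(T^*)^2 + O(e^{-\gamma_0 T^*/2}), \]
and by the choice of $\gamma_0$ in \eqref{ga0} each coefficient satisfies $3\gamma_0 - 2\sqrt{\lambda_0}/\gamma_j \le -5\gamma_0$, so the right-hand side is $\le -5\gamma_0 + o(1) < 0$ for $T_0$ large. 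The main obstacle I anticipate is the design and time-differentiation of $\q F$: since mixed derivatives $\partial_{tx} u$ must be avoided (as the authors flag), the cutoffs $\psi_j$ must respect the matrix structure of $H$, and the coercivity is only available modulo $\boldsymbol a_\pm$ and the modulation-killed direction---so the ODE control of $\boldsymbol a_\pm, \boldsymbol a_0$ has to be interlaced with the Lyapunov estimate rather than derived from it.
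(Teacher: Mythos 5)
Your proposal follows essentially the same route as the paper's proof (modulation control of $\tilde x_j'$, ODEs on $a_{\pm,j}$ and $a_{0,j}$ from $\q H_j Z_{\pm}=\pm\sqrt{\la_0}\ga_j^{-1}Z_\pm$, a localized coercive quadratic functional improving the bound on $\|V\|$, and an ODE-transversality computation for $\boldsymbol a_+$ at $T^*$), and your sign bookkeeping and the bound $3\ga_0 - 2\sqrt{\la_0}/\ga_j \le -5\ga_0$ from \eqref{ga0} are correct; in fact the transversality computation you write is cleaner than the paper's, which drops a factor $2$ from the chain rule (a harmless typo, the final sign and use of $2c_0-3\ga_0>\ga_0$ remains valid). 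Where you diverge genuinely is the design of the Lyapunov functional: you define it as $\q F = \sum_j \int \psi_j\, d_j[V]$, a localized quadratic form in the error $V$, and propose to time-differentiate it directly. The paper instead defines $\mathcal F[\varphi] = E[\varphi] + 2\sum_j\beta_j P_j[\varphi]$ directly on the full solution $\varphi$, with $P_j = \frac12\int\phi_j u_t u_x$, then proves a separate \emph{expansion lemma} $\mathcal F[\varphi] = \sum_j \langle H_j V|V\rangle + \mathrm{const} + O(\|V\|^3 + L^{-1}e^{-2\ga_0 t})$ and an \emph{almost-monotonicity} lemma $|P_j(t)-P_j(S_0)|\le CL^{-1}e^{-2\ga_0 t}$. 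Since $E$ is exactly conserved, this decouples the estimate on $\mathcal F$ from the messy modulation equation for $V$, which is precisely why mixed derivatives $\partial_{tx}u$ never appear: you never need $\partial_t V$ inside a quadratic form.

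One local inaccuracy in your proposal worth flagging: the $\int \partial_t\psi_j\, d_j[V]$ term in $\dot{\q F}$ is \emph{quadratic} in $V$, not cubic. It is controlled not by an extra power of $\|V\|$ but by the factor $1/L$ carried by $\partial_t\psi_j$ (and its localization in the strips where the solitons have already exponentially decayed); this is exactly the mechanism behind the paper's $CL^{-1}e^{-2\ga_0 t}$ error in the $P_j$ monotonicity, which after coercivity gives $\|V(t)\|\le CL^{-1/2}e^{-\ga_0 t}+Ce^{-3\ga_0 t/2}$ and thus strict improvement for $L$ large. So the bootstrap closes, but the phrase "are cubic in $V$" mis-identifies the source of smallness; if you run with your version of $\q F$ you also need to handle the $\sum_j\int\psi_j\,\partial_t(d_j[V])$ piece, which involves the equation for $V$ (hence $\tilde x_j'$, the nonlinearity, and soliton interactions) and is where the extremality/cancellation you allude to actually needs to be verified. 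The paper's choice $E + 2\sum_j\beta_j P_j$ avoids this entirely by pushing the time-dependence into conserved or quasi-conserved quantities of the exact flow.
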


For the sake of continuity, we postpone the proof of Proposition
\ref{prop:flow} until the next paragraph, and conclude the proof of Proposition
\ref{prop:app_Nsol} here, assuming Proposition \ref{prop:flow}.

\medskip

Let us state a few direct consequences of Proposition \ref{prop:flow}, (their proofs will also be done in the next paragraph \ref{4}).

\begin{cor} \label{cor:flow} We have the following properties.
\begin{enumerate}
\item The set of final data which give rise to solutions which exit strictly after $T_0$
\[ \Omega := \{ \boldsymbol{\gh a}_+ \in  B_{\R^N}(0, 1) \mid
T^*(\Theta (\boldsymbol{\gh a}_+)) >T_0 \} \]
is open $($in $B_{\R^N}(0, 1)$$)$.

\smallskip

\item The map $\Omega \to \m R$, $\boldsymbol{\gh a}_+ \mapsto T^*(\Theta
  (\boldsymbol{\gh a}_+)) \in \m R$ is continuous (we emphasize that the data belong to $\Omega$).
  
  \smallskip
  
\item The exit is instantaneous on the sphere:
\be \label{TstarT0}
\text{if} \quad \| \boldsymbol{\gh a}_+ \|_{\ell^2} =1, \quad \text{then}
\quad T^*(\boldsymbol{\gh a}_+) = S_0.
\ee
\end{enumerate}
\end{cor}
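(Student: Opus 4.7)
The plan is to derive all three statements from the transversality condition at the exit time $T^*$ given in Proposition \ref{prop:flow}, together with the strict inequalities \eqref{fin1}--\eqref{fin2}, coupled with continuous dependence of the (NLKG) flow $\varphi$ on the data and the $\q C^1$-smoothness of the modulation (Lemma \ref{mod}).

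For items $(1)$ and $(2)$, fix $\boldsymbol{\gh a}_+^0 \in \Omega$ and set $T^* := T^*(\Theta(\boldsymbol{\gh a}_+^0)) > T_0$. Proposition \ref{prop:flow} asserts that at $t = T^*$ all of $\|V\|$, $|\tilde y_j - \beta_j t - x_j|$, $\|\boldsymbol a_-\|_{\ell^2}$ and $\|\boldsymbol a_0\|_{\ell^2}$ sit strictly below their thresholds (bounded by $\tfrac12$ of these), so that only the sphere $\|\boldsymbol a_+(T^*)\|_{\ell^2} = e^{-3\gamma_0 T^*/2}$ saturates, and moreover the quantity $e^{3\gamma_0 t}\|\boldsymbol a_+(t)\|_{\ell^2}^2$ has strictly negative derivative at $t = T^*$. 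Hence for every sufficiently small $\eta > 0$ one can pick $t_1 \in (T^* - \eta, T^*)$ with $e^{3\gamma_0 t_1}\|\boldsymbol a_+(t_1)\|_{\ell^2}^2 > 1$, and (when $T^* < S_0$) $t_2 \in (T^*, T^* + \eta)$ with $e^{3\gamma_0 t_2}\|\boldsymbol a_+(t_2)\|_{\ell^2}^2 < 1$. By continuous dependence of $\varphi(S_0, \cdot, \cdot)$ on initial data on the compact interval $[t_1, t_2]$, together with the continuity of the modulation map, for $\boldsymbol{\gh a}_+$ in a neighborhood of $\boldsymbol{\gh a}_+^0$ the non-$\boldsymbol a_+$ constraints remain strictly satisfied on $[t_1, t_2]$ and the strict inequalities at $t_1, t_2$ persist. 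Therefore the perturbed exit time must lie in $(t_1, t_2)$, yielding simultaneously the openness of $\Omega$ (item $(1)$) and $|T^*(\Theta(\boldsymbol{\gh a}_+)) - T^*| \le \eta$ (item $(2)$). The boundary case $T^* = S_0$ is handled using only the left-sided interval $(T^* - \eta, T^*]$.

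For item $(3)$, assume $\|\boldsymbol{\gh a}_+\|_{\ell^2} = 1$. By Lemma \ref{ModData}, at time $S_0$ the associated data $U_0 = \Theta(\boldsymbol{\gh a}_+)$ saturates the sphere constraint, $\|\boldsymbol a_+(S_0)\|_{\ell^2} = e^{-3\gamma_0 S_0/2}$, while the remaining constraints are strictly satisfied ($V_0$ of size $\lesssim e^{-3\gamma_0 S_0/2}$, $\boldsymbol a_-(S_0) = \boldsymbol a_0(S_0) = 0$, and the shifts exact). The transversality computation underlying Proposition \ref{prop:flow} only requires that we sit on the sphere at the instant considered; applied at $t = S_0$ it yields again a strictly negative derivative of $e^{3\gamma_0 t}\|\boldsymbol a_+(t)\|_{\ell^2}^2$ there. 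Consequently, for $t$ slightly below $S_0$ one has $\|\boldsymbol a_+(t)\|_{\ell^2} > e^{-3\gamma_0 t/2}$ and hence $\varphi(S_0, t, U_0) \notin \q V(t)$, forcing $T^*(U_0) = S_0$, i.e.\ \eqref{TstarT0}. The main structural obstacle in this scheme is to ensure that at the exit time only the $\boldsymbol a_+$ bound binds, so that the dynamics reduces to a one-dimensional crossing problem amenable to the transversality inequality; this is precisely what the factor-$\tfrac12$ strict bounds \eqref{fin1}--\eqref{fin2} of Proposition \ref{prop:flow} secure.
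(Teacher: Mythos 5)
The overall strategy is the same as the paper's: use the transversality inequality \eqref{eq:a_out} at the exit time, the one-half margins in \eqref{fin1}--\eqref{fin2}, and continuous dependence of the flow and modulation. Item~(1) (openness), the lower bound $\tilde T^* \geq T^*-\eta$ in item~(2), and item~(3) are handled correctly, and your observation for item~(3) that the transversality computation of Step~4 applies at any time where $\q N = 1$ is exactly right.

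However, there is a genuine gap in the upper semicontinuity part of item~(2). To conclude that the perturbed exit time satisfies $\tilde T^* \le t_2$, you need the perturbed solution to lie in $\q V(t)$ for \emph{every} $t \in [t_2, S_0]$, not merely at $t = t_2$. You check the strict inequalities only at the endpoints $t_1, t_2$ (and the non-$\boldsymbol a_+$ constraints on $[t_1,t_2]$), leaving the tail $(t_2, S_0]$ unaddressed: if, for the unperturbed solution, $\q N(\cdot,t)$ came arbitrarily close to $1$ somewhere in $(t_2,S_0]$, a small perturbation could push it above $1$ there, forcing $\tilde T^* > t_2$. The paper closes this by first proving a uniform strict bound $\q N(\boldsymbol{\gh a}_+,t) < 1-\delta$ for all $t \in [T^*+\e,S_0]$. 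This follows precisely from the observation you already make for item~(3): the transversality estimate holds at any $t_0$ with $\q N(\cdot,t_0)=1$, so if $\q N$ re-touched $1$ at some $t_0 > T^*$ the negative derivative would force $\q N>1$ just before $t_0$, contradicting $\q N \le 1$ on $[T^*,S_0]$; hence $\q N<1$ on $(T^*,S_0]$, and compactness gives the uniform $1-\delta$ gap on $[T^*+\e,S_0]$. With that uniform bound in hand, continuous dependence on the full interval $[T^*-\e,S_0]$ yields both semicontinuity directions. You should insert this ``no re-touching'' argument before concluding $\tilde T^*\le t_2$.

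A minor point: Lemma~\ref{ModData} as printed reads $\boldsymbol a_+(S_0)=e^{-3\gamma_0 T_0/2}\boldsymbol{\gh a}_+$; for item~(3) one needs $\q N(\boldsymbol{\gh a}_+,S_0)=\|\boldsymbol{\gh a}_+\|_{\ell^2}^2$, so the exponent must be $-3\gamma_0 S_0/2$ (a typo in the paper); your proof silently uses the corrected normalization, which is the right thing to do.
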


We are now in a position to complete the proof of Theorem \ref{MT1}.

\begin{proof}[End of the proof of Proposition \ref{prop:app_Nsol}]
We argue by contradiction. Assume that all possible $\boldsymbol{\gh
  a}_+\in B_{\R^N}(0, 1)$ give rise to initial data $U_0=\Theta
(\boldsymbol{\gh a}_+) \in \q V(S_0)$ and corresponding solutions
$\varphi(S_0,t,U_0)$ that exit $\q V(t)$ strictly after $T_0$, i.e.
\be\label{exit}
\text {assume that } \Omega =  B_{\R^N}(0, 1).
\ee
Given $U_0 \in \q V(T_0)$, we denote $\Phi(U_0)$ the rescaled projection of the \emph{exit spot}
\[
 \Phi(U_0) = e^{3\ga_0 T^*(U_0)/2} \boldsymbol a_+(T^*(U_0)), 
\]
so that $\Phi(U_0) \in B_{\R^N}(0,1)$. Let us finally consider the rescaled projection of the exit spot $\Psi$, defined as follows:
\[ 
\Psi: B_{\R^N}(0, 1) \to  \m B_{\R^N}(0,1), \quad \boldsymbol{\gh a}_+ \mapsto  \Psi(\boldsymbol{\gh a}_+) =\Phi \circ \Theta (\boldsymbol {\gh a}_+). 
\]
Corollary \ref{cor:flow} then translates into the following properties for $\Psi$:
\begin{itemize}
\item $\Psi: B_{\R^N}(0, 1) \to  \m S^{N-1}$ is continuous (like $T^*$, $\Phi$ and $\Theta$);

\smallskip

\item If $\| \boldsymbol{\gh a}_+ \|_{\ell^2} =1$, $\Psi(\boldsymbol{\gh a}_+) = \boldsymbol{\gh a}_+$ (cf. (\ref{TstarT0}) and (\ref{avectors2})); i.e $\Psi|_{\m S^{N-1}} = \Id$.
\end{itemize}
These two affirmations contradict the Brouwer's Theorem. Hence our
assumption \eqref{exit} is wrong, and there exists $\gh a^+$ such that the
solution $U(t) = \varphi(S_0,t,\Theta (\boldsymbol {\gh a}_+))$ satisfies $T^+(\Theta (\boldsymbol {\gh a}_+))= T_0$. In particular $U(t) \in \q V(t)$ for all $t \in [T_0,S_0]$, and $U_0:=U(S_0)=\Theta (\boldsymbol {\gh a}_+) $ satisfies the
conditions of Proposition \ref{prop:app_Nsol}.
\end{proof}

\subsection{Bootstrap estimates}\label{4}

This paragraph is devoted to the last remaining results needed to complete Proposition \ref{prop:app_Nsol}: Proposition \ref{prop:flow} and Corollary \ref{cor:flow}

\begin{proof}[Proof of Proposition \ref{prop:flow}]

\noindent
{\bf Step 1.}  First, we introduce some notation. Consider the flow $\varphi(t) = \varphi(S_0,t,\Theta(\boldsymbol{\gh a}_+))$ given by Proposition \ref{prop:flow}, and valid for all $t\in [T^*,S_0]$. From Lemma \ref{mod}, we have
\be\label{deco}
\varphi(t) =\tilde R(t) + V(t), 
\ee
where
\begin{gather}
\label{yjxj}
\tilde R(t,x)  = \sum_{j=1}^N \tilde R_j(t,x) , \quad  \tilde R_j(t,x) = (Q_{\beta_j}, \partial_{t}Q_{\beta_j})^T (x- \tilde y_j(t)),  \\ \tilde y_j(t) = \bt_j t+ \tilde x_j(t), \label{xy}
\end{gather}
and 
\[
V(t) = (v_1(t),v_2(t))^T.
\]
Additionally, from  the equation satisfied by $\varphi$, we have 
\[
\varphi_t = \left( \begin{array}{cc} 0 &  \Id \\   \partial_x^2 - \Id & 0 \end{array}\right) \varphi  +\left( \begin{array}{c} 0 \\ f(u) \end{array}\right), 
\]
where $\varphi = (u,u_t)^T$. Replacing the decomposition (\ref{deco}), we have
\be\label{eqV}
V_t = \left( \begin{array}{cc} 0 &  \Id \\  \partial_x^2- \Id  + f'(Q_{\bt_j}) & 0 \end{array}\right) V   +   Rem(t) = \mathcal L_j V   +   Rem(t), 
\ee
with $\mathcal L_j :=\mathcal L(\beta_j)$ defined in (\ref{qL}),
\begin{align*}
Rem(t) & := \begin{pmatrix} 0 &  \Id \\  \partial_x^2- \Id & 0 \end{pmatrix} \tilde R -\tilde R_t + \begin{pmatrix} 0 \\ f(u) -f'(Q_{\bt_j})v_1\end{pmatrix} \\
& = \sum_{k=1}^N  \tilde x_k'(t) \partial_x \begin{pmatrix} Q_{\bt_k} \\ -\bt_k \partial_x Q_{\bt_k} \end{pmatrix} \\
& \quad + \begin{pmatrix} 0 \\   f(\sum_{k=1}^N Q_{\bt_k} + v_1) - \sum_{k=1}^N f(Q_{\bt_k}) - f'(Q_{\bt_j})v_1 \end{pmatrix}. 
\end{align*}
First of all, note that from \eqref{Phi0} we have $ \begin{pmatrix} \partial_xQ_{\bt_k} \\ -\bt_k \partial_{xx} Q_{\bt_k} \end{pmatrix} =  \Phi_{0,k}.$ If we take the scalar product of \eqref{eqV} with $(\tilde R_j)_x$, then the orthogonality \eqref{Orthooo} (coming from modulation) leads to the estimate
\be\label{xjxj}
|\tilde x_j'(t)| \le C(\|V(t)\| + e^{-3\ga_0t}),
\ee
valid for all $j=1,\ldots, N$. Indeed, we have
\[
\langle (\tilde R_{j})_x |V_t \rangle =\langle (\tilde R_{j})_x | \mathcal L_j V \rangle  +\langle (\tilde R_{j})_x |Re(t) \rangle .
\]
Note that from \eqref{xy}
\[
\langle (\tilde R_{j})_x |V_t \rangle = -\langle (\tilde R_{j})_{xt} |V \rangle  = (\bt_j + \tilde x_j'(t)) \langle \partial_{xx} \tilde R_{j} |V \rangle.
\]
Consequently
\[
|\langle (\tilde R_{j})_x |V_t \rangle | \leq C (1+|\tilde x_j'(t)|) \|V(t)\|.
\]
On the other hand,
\[
\langle (\tilde R_{j})_x | \mathcal L_j V \rangle = \left\langle \begin{pmatrix} \partial_x Q_{\beta_j} \\  \partial_{xt}Q_{\beta_j} \end{pmatrix}   \Big|  \begin{pmatrix} v_2 \\ - T_{\beta_j} v_1 \end{pmatrix}   \right\rangle = \left\langle \begin{pmatrix} \partial_x Q_{\beta_j} \\ - T_{\beta_j} \partial_{xt}Q_{\beta_j} \end{pmatrix}   \Big|  \begin{pmatrix} v_2 \\  v_1 \end{pmatrix}   \right\rangle,
\]
so that 
\[
|\langle (\tilde R_{j})_x | \mathcal L_j V \rangle | \leq C \|V(t)\|.
\]
Finally, we deal with the term $\langle (\tilde R_{j})_x |Rem(t) \rangle $. From the definition of $Rem(t)$ we have
\bee
\langle (\tilde R_{j})_x |Rem(t) \rangle & = & \sum_{k=1}^N  \tilde x_k'(t) \langle  (\tilde R_{j})_x |  \Phi_{0,k} \rangle \\
& &  \quad + \Big( \partial_{xt} Q_{\bt_j} \Big| f \Big(\sum_{k=1}^N Q_{\bt_k} + v_1 \Big) - \sum_{k=1}^N f(Q_{\bt_k}) - f'(Q_{\bt_j})v_1 \Big).
\eee
Since $\tilde R_{j})_x = \Phi_{0,j}$, we get
\[
\langle  (\tilde R_{j})_x |  \Phi_{0,j} \rangle = \| \Phi_{0,j}\|^2,
\]
and if $k\neq j$, 
\[
|\langle  (\tilde R_{j})_x |  \Phi_{0,k} \rangle | =| \langle \Phi_{0,j}|  \Phi_{0,k} \rangle | \leq Ce^{-3\ga_0 t}.
\]
Now if $x \in [ m_{j}t, m_{j+1}t ]$,  then for all $p\neq j$ (see \eqref{ga0}),
\[
| Q_{\beta_p}(t,x)| \leq Ce^{-3\ga_0 t}.
\]
Therefore, inside this region (note that if $d\geq 2$ then $f$ is a pure power nonlinearity)
\[
\Big\| f \Big(\sum_{k=1}^N Q_{\bt_k} + v_1 \Big) - \sum_{k=1}^N f(Q_{\bt_k}) - f'(Q_{\bt_j})v_1 \Big\|_{L^2} \leq Ce^{-3\ga_0 t}+ C\|V(t)\|^2.
\]
On the other hand, if $x \notin [ m_{j}t, m_{j+1}t ]$
\[
 | \partial_{xt} Q_{\beta_j}| \leq Ce^{-3\ga_0 t}.
\]
%Therefore, for all $x \in ( m_{\ell}t, m_{\ell+1}t )$,
%\[
%\abs{f'( \sum_{k=1}^N Q_{\beta_k}) -  \sum_{k=1}^N f'(Q_{\beta_k})} \leq Ce^{-\ga_0 t}.
%\]
In conclusion, we have
\be\label{cle}
\Big( \partial_{xt} Q_{\bt_j} \Big| f \Big(\sum_{k=1}^N Q_{\bt_k} + v_1 \Big) - \sum_{k=1}^N f(Q_{\bt_k}) - f'(Q_{\bt_j})v_1 \Big) \leq C e^{-3\ga_0 t} + C\|V(t)\|^2.
\ee
Collecting the preceding estimates we get \eqref{xjxj}.
%
%Additionally, there is a constant $C>0$, independent of time, such that
%\begin{align}
%\MoveEqLeft \left| f \Big(\sum_{k=1}^N Q_{\bt_k} + v_1 \Big) - \sum_{k=1}^N f(Q_{\bt_k}) - f'(Q_{\bt_j})v_1 \right|   \nonu \\
%& \le \left| f \Big(\sum_{k=1}^N Q_{\bt_k} + v_1 \Big) -   f \Big(\sum_{k=1}^N Q_{\bt_k}  \Big)   - f' \Big(\sum_{k=1}^N Q_{\bt_k}  \Big) v_1 \right|   \nonu \\
%& \quad + \left| f \Big(\sum_{k=1}^N Q_{\bt_k}  \Big)  - \sum_{k=1}^N f(Q_{\bt_k})    \right| + \left|   f' \Big(\sum_{k=1}^N Q_{\bt_k}  \Big)  -   f'(Q_{\bt_j}) \right| |v_1|  \nonu \\
%& =: I_1 + I_2 + I_3.  \nonu
%\end{align}
%Now we deal with each term above. First of all, since $\sum_{k=1}^N Q_{\bt_k} $ stays bounded uniformly in time, we have
%\[
%I_1 \leq C |v_1|^2.
%\]
% \left| f \Big(\sum_{k=1}^N Q_{\bt_k} + v_1 \Big) - \sum_{k=1}^N f(Q_{\bt_k}) - f'(Q_{\bt_j})v_1 \right|   \le   C(e^{-3\ga_0 t}  + v_1^2). 
%\ee

\medskip

\noindent
{\bf Step 2. Control of degenerate directions.}  The next step of the proof is to consider the dynamics of the associated scalar products $a_{\pm,j}(t)$ and $a_{0,j}(t)$ introduced in (\ref{def:a}).

\begin{lem}\label{daj}
Let $a_{\pm,j} (t) $ and $a_{0,j}(t)$ be as defined in (\ref{def:a}). There is a constant $C>0$, independent of $S_0$ and $T^*\ge T_0$, such that for all $t\in [T^*, S_0]$,
\be\label{aprime}
\left| a_{\pm,j}'(t)  \pm  \frac{\sqrt{\la_0}}{\ga_j} a_{\pm,j}(t) \right| \le C\|V(t)\|^2 + Ce^{-3\ga_0 t},
\ee
and
\be\label{a0prime}
\abs{ a_{0,j}'(t)} \le C\|V(t)\|^2 + Ce^{-3\ga_0 t}.
\ee
\end{lem}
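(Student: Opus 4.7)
The plan is to differentiate $a_{\pm,j}(t) = \langle V(t) | Z_{\mp,j}(t) \rangle$ and $a_{0,j}(t) = \langle V(t) | Z_{0,j}(t) \rangle$, substitute the evolution equation \eqref{eqV} for $V$, and extract the principal eigenvalue contribution using the spectral relations \eqref{Zpm}. The bridge between the flow operator $\q L_j$ and the spectral operator $\q H_j$ is the formal adjoint identity, which a direct integration by parts (using $T^* = T$, $\pd_x^* = -\pd_x$ and the symmetry of $L$) establishes:
\[ \langle (\q L_j + \bt_j \pd_x \Id_{2\times 2}) V \,|\, W \rangle = \langle V \,|\, \q H_j W \rangle, \qquad V, W \in H^1 \times L^2. \]

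Since $Z_{\mp,j}$ and $Z_{0,j}$ depend on $t$ only through the soliton center $\tilde y_j(t)$, one has $\pd_t Z_{\mp,j} = -(\bt_j + \tilde x_j'(t))\pd_x Z_{\mp,j}$ by \eqref{xy}, and similarly for $Z_{0,j}$. Differentiating $a_{\pm,j}$, substituting $V_t = \q L_j V + Rem(t)$ and invoking the adjoint identity leads to
\begin{align*}
a_{\pm,j}'(t) &= \langle \q L_j V | Z_{\mp,j} \rangle - \bt_j \langle V | \pd_x Z_{\mp,j}\rangle + \langle Rem(t) | Z_{\mp,j} \rangle - \tilde x_j'(t) \langle V | \pd_x Z_{\mp,j} \rangle \\
&= \langle V | \q H_j Z_{\mp,j} \rangle + \langle Rem(t) | Z_{\mp,j} \rangle - \tilde x_j'(t) \langle V | \pd_x Z_{\mp,j} \rangle \\
&= \mp \frac{\sqrt{\la_0}}{\ga_j}\, a_{\pm,j}(t) + \langle Rem(t) | Z_{\mp,j} \rangle - \tilde x_j'(t) \langle V | \pd_x Z_{\mp,j} \rangle,
\end{align*}
the principal coefficient coming from \eqref{Zpm}. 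The computation for $a_{0,j}$ is identical and produces no principal term since $\q H_j Z_{0,j} = 0$.

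It remains to bound the right-hand side by $C\|V\|^2 + Ce^{-3\ga_0 t}$. Using \eqref{xjxj}, the exponential decay of $\pd_x Z_{\mp,j}$ and Young's inequality, the transport term $\tilde x_j'(t)\langle V | \pd_x Z_{\mp,j}\rangle$ is at most $C\|V\|^2 + Ce^{-3\ga_0 t}$. The remainder $\langle Rem(t) | Z_{\mp,j} \rangle$ decomposes into the modulation sum $\sum_k \tilde x_k'(t) \langle \Phi_{0,k} | Z_{\mp,j} \rangle$ and the nonlinear piece. For the former, the diagonal contribution cancels exactly: using Lemma \ref{Ypm} and $H_j \Phi_{0,j} = 0$ from Proposition \ref{PropH}, one computes
\[ \langle \Phi_{0,j} \,|\, Z_{\mp,j} \rangle = \langle \Phi_{0,j} \,|\, H_j Y_{\mp,j} \rangle = \langle H_j \Phi_{0,j} \,|\, Y_{\mp,j} \rangle = 0. \]
For $k \neq j$, the spatial separation $|y_k - y_j| \gtrsim t$ together with the choice of $\ga_0$ in \eqref{ga0} gives $\langle \Phi_{0,k} | Z_{\mp,j} \rangle = O(e^{-3\ga_0 t})$. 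The nonlinear piece is handled exactly as in \eqref{cle} via a Taylor expansion of $f$ around $Q_{\bt_j}$, using the localization of $Z_{\mp,j}$ near $y_j$ and the splitting of $\m R$ into soliton-centered regions $[m_j t, m_{j+1} t]$, producing the bound $C\|V\|^2 + Ce^{-3\ga_0 t}$. For $a_{0,j}$, the diagonal cancellation is even more direct: $\langle \Phi_{0,j} | Z_{0,j} \rangle = \langle J Z_{0,j} | Z_{0,j} \rangle = 0$ by antisymmetry of $J$.

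The main obstacle I anticipate is the bookkeeping of the nonlinear-interaction contribution to $Rem(t)$ tested against $Z_{\mp,j}$ or $Z_{0,j}$: one has to exploit simultaneously the exponential localization of these eigenfunctions around the center moving at velocity $\bt_j$ and the linearly growing separation between distinct solitons, so that cross-soliton terms yield $O(e^{-3\ga_0 t})$ errors rather than $O(\|V\|)$ ones, while the genuinely quadratic remainder from the Taylor expansion is responsible for the $C\|V\|^2$ term. Once those pieces are assembled, the estimates \eqref{aprime} and \eqref{a0prime} follow.
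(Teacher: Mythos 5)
Your proposal is correct and follows the same route as the paper's proof: differentiate $a_{\pm,j}$, substitute $V_t = \q L_j V + Rem(t)$, convert to $\q H_j$ via the adjoint relation (you use $\q H_j^* = \q L_j + \bt_j\partial_x$, the paper uses the equivalent $\q L_j^* - \bt_j\partial_x = \q H_j$), read off the eigenvalue from \eqref{Zpm}, cancel the diagonal modulation term by the orthogonality $\langle \Phi_{0,j} | Z_{\mp,j}\rangle = 0$ (established in Lemma \ref{Ypm}), and absorb the cross-soliton and nonlinear remainders into $O(\|V\|^2 + e^{-3\ga_0 t})$. Your explicit treatment of $a_{0,j}$ via the antisymmetry of $J$ is a valid and slightly more direct way to get $\langle \Phi_{0,j} | Z_{0,j}\rangle = 0$, which the paper leaves as ``similar.''
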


\begin{proof}
We  prove the case of $a_{-,j}(t)$. The other cases are similar. We compute the time derivative of $a_{-,j}$ using \eqref{yjxj} and \eqref{eqV}, and we choose $\ga_0>0$ as small as needed, but fixed.
\begin{align*}
a_{-,j}'(t) & = - \tilde y_j'(t)  \langle  (Z_{+,j})_x |V(t)\rangle +  \langle Z_{+,j}| V_t (t) \rangle  \\
& = - \tilde x_j'  \langle (Z_{+,j})_x |V(t) \rangle + \langle(\q L_j^* - \bt_j \partial_{x}) Z_{+,j}| V(t) \rangle  + \sum_{k=1}^N x_k'  \langle \Phi_{0,k}|Z_{+,j}\rangle \\
& \quad + O(\|V(t)\|^2 + e^{-3\ga_0 t}).
\end{align*}
From Lemma \ref{Ypm} we have $ \langle \Phi_{0,j}|Z_{+,j} \rangle =0$. Therefore, since $\q L_j^* - \bt_j \partial_{x} = \q H_j$, where $\q H_j := \q H(Q_{\bt_j})$ (cf. (\ref{qH})), we have from Lemma \ref{Z} and \eqref{xjxj},
\begin{align*}
a_{-,j}'(t) &= \frac{\sqrt{\la_0}}{\ga_j} a_{-,j}(t) +  O( |x_j'| \|V(t)\|  + \|V(t)\|^2 +   e^{-3\ga_0t} )\\
& = \frac{\sqrt{\la_0}}{\ga_j} a_{-,j}(t) +  O( \|V(t)\|^2 +   e^{-3\ga_0t} ). \qedhere
\end{align*}
\end{proof}

\noindent
{\bf Step 3. Lyapunov functional.}  Let $L_0>0$ be a large constant to be chosen later. Let $(\phi_j)_{j=1,\dots, N}$ be a partition of the unity of $\R$ placed at the midpoint between two solitons. More precisely, let 
\be\label{phi}
\phi\in C^\infty(\R), \quad \phi'>0, \quad \lim_{-\infty}\phi=0, \quad \lim_{+\infty}\phi=1. 
\ee
We have\footnote{Do not confuse the constant $L$ in \eqref{LLL} with the operator $L$ in \eqref{LL}.}, for all $L>L_0$,
\be \label{LLL}
\sum_{j=1}^N \phi_j (t,x)\equiv 1, \qquad \phi_j (t,x) = \phi \Big( \frac{x -m_j t}{L}\Big) - \phi \Big( \frac{x -m_{j+1} t}{L}\Big),
\ee
where $m_j := \frac 12 (\bt_j+ \bt_{j-1})$, with $j=2,\ldots, N-1$, and $m_1 :=-\infty$, $m_N=+\infty$. We introduce the $j$-th portion of momentum
\be\label{Pj}
P_j[\varphi](t) := \frac 12 \int \phi_j  u_t  u_x \ dx, \quad \varphi = (u,u_t)^T,
\ee
and the modified \emph{Lyapunov functional} 
\be\label{WF}
\mathcal F[\varphi](t) := E[\varphi](t) +  2\sum_{j=1}^N\beta_j P_j [\varphi](t), 
\ee
with $E[\varphi]$ being the energy defined in (\ref{E}). Our first result is a suitable decomposition of $\mathcal F[u]$ around the multi-soliton solution.
\begin{lem}
Let $ V (t) =(v_1(t),v_2(t))^T$ be the error function defined in Proposition \ref{prop:flow}. There is a positive constant $C>0$ such that
\be\label{Expa}
\Big| \mathcal F[\varphi](t) -  \sum_{j=1}^N \langle H_jV|V \rangle  \Big| \le  C \|V (t)\|^3 + \frac CL e^{-2\ga_0 t},
\ee
where
\be\label{HVj}
\langle H_jV|V \rangle  :=  \int  \phi_j (v_2^2 + (v_1)_x^2 + v_1^2 - f'(Q_{\beta_j}) v^2 +2 \bt_j  v_2 (v_1)_x ).
\ee
\end{lem}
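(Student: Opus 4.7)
The plan is to Taylor expand $\mathcal F[\tilde R+V]$ in $V$ around $\tilde R$ to second order and control each order in turn. We first rewrite, via the partition of unity $\sum_j \phi_j \equiv 1$, $\mathcal F[\varphi] = \sum_{j=1}^N \mathcal F_j^{\mathrm{loc}}[\varphi]$ with
\[ \mathcal F_j^{\mathrm{loc}}[\varphi] := \tfrac12 \int \phi_j \bigl( u_t^2 + u_x^2 + u^2 - 2F(u) + 2\beta_j u_t u_x \bigr), \]
then substitute $u = r + v_1$, $u_t = \dot r + v_2$ with $r := \sum_k w_k$, $w_k := Q_{\beta_k}(0,\cdot-\tilde y_k)$, $\dot r := -\sum_k \beta_k (w_k)_x$, and collect terms by order in $V$: a constant $V^0$ piece, a linear $V$ piece, a quadratic $V$ piece, and a Taylor remainder from the nonlinearity $F$. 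The decisive geometric input is soliton separation: for $\gamma_0$ as in \eqref{ga0} and the modulation bound $|\tilde y_k - \beta_k t - x_k| \le e^{-\gamma_0 t}$, on $\mathrm{supp}(\phi_j) \subset [m_j t, m_{j+1} t]$ one has $|w_k| + |(w_k)_x| \le C e^{-3 \gamma_0 t}$ for every $k \ne j$, while $(\phi_j)_x$ (of $L^\infty$-size $O(1/L)$) is supported precisely where $(w_j)_x$ itself decays faster than $e^{-2\gamma_0 t}$.

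\textbf{The main obstacle is the linear term,} which must turn out $O(L^{-1} e^{-2 \gamma_0 t})$ even though $\tilde R$ is only an \emph{approximate} critical point of $\mathcal F$. After integration by parts on $r_x (v_1)_x$ and $\dot r (v_1)_x$, the coefficient of $v_1$ pairs against $\phi_j \bigl( -(1-\beta_j^2)(w_j)_{xx} + w_j - f(w_j) \bigr)$, which vanishes identically by the defining equation \eqref{Q} together with the Lorentz rescaling $Q_{\beta_j}(0,\cdot) = Q(\gamma_j\cdot)$; the coefficient of $v_2$ cancels similarly via $\dot w_j = -\beta_j (w_j)_x$. Three remainders survive: (i) the nonlinear defect $f(r) - \sum_k f(w_k)$ tested against $\phi_j v_1$, pointwise $O(e^{-3\gamma_0 t})$ on $\mathrm{supp}(\phi_j)$ by the separation estimate; (ii) boundary terms $\int (\phi_j)_x (w_j)_x v_1$ of size at most $C L^{-1} e^{-c t} \|v_1\|_{L^2}$ with $c \ge 2\gamma_0$; (iii) off-diagonal momentum contributions $\int \phi_j (w_k)_x v_2$ for $k \ne j$, again exponentially small. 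Cauchy--Schwarz combined with $\|V\| \le 1$ merges these three into the advertised $O(L^{-1} e^{-2\gamma_0 t})$.

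\textbf{Quadratic and higher-order terms.} The quadratic contribution is
\[ \tfrac12 \sum_j \int \phi_j \bigl( v_2^2 + (v_1)_x^2 + v_1^2 - f'(r) v_1^2 + 2\beta_j v_2 (v_1)_x \bigr), \]
and replacing $f'(r) \to f'(w_j)$ on $\mathrm{supp}(\phi_j)$ costs at most $C(e^{-3\gamma_0 t} + \|v_1\|_{L^\infty}) \|v_1\|_{L^2}^2$, the exponentially small piece being absorbed into the $L^{-1} e^{-2\gamma_0 t}$ remainder and the Sobolev piece into $\|V\|^3$ via $H^1 \hookrightarrow L^\infty$ in $d=1$. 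This recovers (up to the normalization and zeroth-order constant $\mathcal F[\tilde R]$ understood in the statement of \eqref{Expa}) the quantity $\sum_j \langle H_j V | V \rangle$. Finally, the cubic-and-higher Taylor remainder of $F(r+v_1)$ around $r$ is bounded pointwise by $C|v_1|^3$ in case (A) (where $f \in \mathcal C^1$ vanishes to second order at $0$), or by $C(|r|+|v_1|)^{p-2}|v_1|^3$ in case (B); in either case the integral is $\le C\|v_1\|_{L^\infty} \|v_1\|_{L^2}^2 \lesssim \|V\|^3$ by Sobolev and the $H^1$-subcriticality of $p$. Assembling the three contributions yields \eqref{Expa}.
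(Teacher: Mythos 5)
Your proof is correct and follows essentially the same route as the paper's: a second-order Taylor expansion of $\mathcal F[\varphi]$ in $V$ around $\tilde R$, with the zeroth-order piece treated as a constant, the linear piece killed by the boosted soliton ODE $-(1-\beta_j^2)\partial_{xx}Q_{\beta_j} + Q_{\beta_j} - f(Q_{\beta_j}) = 0$ up to cross-soliton and $(\phi_j)_x$ terms controlled by separation, the quadratic piece giving $\langle H_j V|V\rangle$ after replacing $f'(\tilde R_1)$ by $f'(Q_{\beta_j})$ on $\mathrm{supp}\,\phi_j$, and the cubic remainder absorbed into $\|V\|^3$. You also correctly flagged that \eqref{Expa} as written is missing the $\tfrac12$ normalization in front of $\sum_j\langle H_jV|V\rangle$ and the $V$-independent constant $\mathcal F[\tilde R]$; the paper's own computation produces both, and the mismatch is harmless in the application since the subsequent Corollary only uses $\mathcal F[\varphi](t)-\mathcal F[\varphi](S_0)$.
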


\begin{proof}
From the decomposition 
\be\label{desco}
\varphi(t) = (u, u_t)(t) =(\tilde R_1, \tilde R_2)^T + (v_1,v_2)^T(t),
\ee
we have
\begin{align*}
\mathcal F[\varphi](t) &= \frac 12 \int (u_t^2 +u_x^2 +u^2 - F(u)) +  \sum_{j=1}^N\beta_j \int  \phi_j  u_t  u_x  \\
& =  \frac 12 \int (\tilde R_2^2 + (\tilde R_1)_x^2  +\tilde R_1^2  -2F(\tilde R_1) ) + \sum_{j=1}^N \bt_j   \int \tilde R_2  (\tilde R_1)_x \ \phi_j \\
& \quad + \int \big[ \tilde R_2 v_2 + (\tilde R_1)_x  (v_1)_x   + \tilde  R_1 v_1   -  f(\tilde R_1) v_1 + \sum_{j=1}^N \bt_j (\tilde R_2 (v_1)_x + v_2  (\tilde R_1)_x) \phi_j\big] \\
& \quad +\frac 12 \int (v_2^2 + (v_1)_x^2 + v_1^2 -f'(\tilde R_1) v_1^2) +\sum_{j=1}^N \bt_j \int v_2  (v_1)_x \ \phi_j \\
& \quad - \int ( F(\tilde R_1 +v_1) - F(\tilde R_1) - f(\tilde R_1) v_1 - \frac 12 f'(\tilde R_1) v_1^2 )\\
&=:   I_1 + I_2 + I_3 + I_4.
\end{align*}
Let us consider the term $I_1$. Since  $\tilde R_2= -\sum_{j=1}^N \bt_j (Q_{\bt_j})_x$ and $(\tilde R_1)_x =\sum_{j=1}^N  (Q_{\bt_j})_x$, one has
\bee
I_1 & =&  \frac 12 \sum_{j=1}^N  \int  \Big[  \bt_j^2(Q_{\bt_j})_x^2 + (Q_{\bt_j})_x^2+ Q_{\bt_j}^2 - 2F(Q_{\bt_j})  - 2\bt_j^2  (Q_{\bt_j})_x^2  \Big]  + O(e^{-3\ga_0 t})\\
& =& \frac 12 \int  \Big[  Q_x^2 + Q^2 - 2F(Q) \Big]\sum_{j=1}^N\frac 1{\ga_j}  + O(e^{-3\ga_0 t}).
\eee
Now we consider $I_2$. Integrating by parts, we have
\begin{align*}
I_2 & = \int  v_2 \Big[ \tilde R_2 + (\tilde R_1)_x  \sum_{j=1}^N \bt_j   \phi_j \Big]  \\
& \quad - \int v_1 \Big[ (\tilde R_1)_{xx}    - \tilde R_1   +  f(\tilde R_1)  +  (\tilde R_2)_{x}  \sum_{j=1}^N \bt_j  \phi_j\Big] - \sum_{j=1}^N \bt_j  \int  v_1 \tilde R_2 (\phi_j)_x. 
\end{align*}
Note that
\[
\tilde R_2 + (\tilde R_1)_x  \sum_{j=1}^N \bt_j   \phi_j =   \sum_{k=1}^N  [-\bt_k   (Q_{\bt_k})_x +(Q_{\bt_k})_x  \sum_{j=1}^N\bt_j  \phi_j]  =    \sum_{k=1}^N (Q_{\bt_k})_x \sum_{j\neq k}^N\bt_j  \phi_j.
\]
Hence
\[
 \int  v_2 \Big[ \tilde R_2+ (\tilde R_1)_x  \sum_{j=1}^N \bt_j   \phi_j \Big] =  O(e^{-3\ga_0t}).
\]
On the other hand,
%\begin{align*}
%\MoveEqLeft (\tilde R_1)_{xx}  - \tilde R_1   +  f(\tilde R_1)  +  (\tilde R_2)_{x}  \sum_{j=1}^N \bt_j  \phi_j \\
% & =  \sum_{k=1}^N [  (Q_{\bt_k})_{xx}   - Q_{\bt_k} + f(Q_{\bt_k}) - \bt_k^2 (Q_{\bt_k})_{xx}   \phi_k] + \sum_{k=1}^N\bt_k (Q_{\bt_k})_{xx}  \sum_{j\neq k}   \bt_j   \phi_j  + \Big[ f(\tilde R_1) - \sum_{j=1}^N f(Q_{\bt_j}) \Big] \\
% & =   O(e^{-3\ga_0t}).
%\end{align*}
\[
 \int v_1 \Big[ (\tilde R_1)_{xx}    - \tilde R_1    + f(\tilde R_1)  +  (\tilde R_2)_{x}  \sum_{j=1}^N \bt_j  \phi_j\Big]  =   O(e^{-3\ga_0t}).
 \]
Finally,
\[ \abs{\sum_{j=1}^N \bt_j \int  v_1 \tilde R_2(\phi_j)_x } \le C \|v_1 \|_{L^2(\R)} e^{-2\ga_0 t}. \]
Gathering the above estimates, we get
\[ |I_2| \le    Ce^{-3\ga_0t}. \]
Let us consider the integral $I_3$. Since $\sum_j \phi_j =1$, we have
\begin{align*}
I_3 &  = \frac 12 \sum_{j=1}^N \int \phi_j (v_2^2 + (v_1)_x^2 + v_1^2 - f'(\tilde R_1) v_1^2 +2 \bt_j  v_2 (v_1)_x ) \\
& = \frac 12 \sum_{j=1}^N \int \phi_j (v_2^2 + (v_1)_x^2 + v_1^2 - f'(Q_{\beta_j}) v_1^2 +2 \bt_j  v_2 (v_1)_x ) - \frac 12 \sum_{k\neq j} \int \phi_j  f'(Q_k) v_1^2 \\
&  \quad - \frac 12 \sum_{j=1}^N \int \phi_j \Big(f'( \sum_{k=1}^N Q_{\beta_k}) -  \sum_{k=1}^N f'(Q_{\beta_k}) \Big) v_1^2   .  
\end{align*}
Fix $\ell\in \{1,\ldots, N-1\}$. If $x \in [ m_{\ell}t, m_{\ell+1}t ]$,  then for all $p\neq \ell$,
\[
|Q_{\beta_p}(t,x)| \leq Ce^{-2\ga_0 t}.
\]
Therefore, for all $x \in [ m_{\ell}t, m_{\ell+1}t ]$,
\[
\abs{f'( \sum_{k=1}^N Q_{\beta_k}(t,x)) -  \sum_{k=1}^N f'(Q_{\beta_k}(t,x) )} \leq Ce^{-\ga_0 t}.
\]
Repeating the same argument for each $\ell$, and using \eqref{fin1}, we get
\[
I_3  = \frac 12 \langle H_jV | V\rangle  + O(e^{-3\ga_0 t}).
\]

Finally, we consider $I_4$. It is not difficult to check that 
\[
|I_4| \le C \|V\|^3.
\]
Collecting the above results, we get finally (\ref{Expa}).
\end{proof}

Our next result describes the variation of the momentum $P_j$.
\begin{lem}
There exists $C>0$ independent of time and $L$, such that for all $t\in [T^*,S_0]$,
\be\label{DPj}
\abs{ P_j[\varphi](t) -P_j[\varphi](S_0)} \le \frac CL e^{-2\ga_0 t}.
\ee
\end{lem}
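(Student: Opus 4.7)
The plan is to compute the time derivative $\frac{d}{dt} P_j[\varphi](t)$, show that it is bounded by $\frac{C}{L} e^{-2\gamma_0 t}$ pointwise in $t$, and then integrate over $[t,S_0]$ (the resulting integral is again bounded by $\frac{C}{L} e^{-2\gamma_0 t}$, up to the harmless factor $1/(2\gamma_0)$).

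First, using the equation $u_{tt}=u_{xx}-u+f(u)$ and integration by parts, one finds
\[
\frac{d}{dt} P_j[\varphi](t)=\frac{1}{2}\int (\partial_t \phi_j)\, u_t u_x\,dx-\frac{1}{4}\int (\partial_x \phi_j)\bigl(u_t^2+u_x^2-u^2+2F(u)\bigr)dx.
\]
The cutoff $\phi_j$ in \eqref{LLL} satisfies $\partial_x\phi_j=\frac{1}{L}[\phi'(\frac{x-m_j t}{L})-\phi'(\frac{x-m_{j+1}t}{L})]$ and $\partial_t\phi_j=-\frac{1}{L}[m_j\phi'(\frac{x-m_j t}{L})-m_{j+1}\phi'(\frac{x-m_{j+1}t}{L})]$. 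Thus each term in $\frac{d}{dt}P_j$ carries an explicit factor $1/L$, and is supported, up to exponentially small error, in narrow strips of width $O(L)$ around the midpoints $x=m_j t$, $x=m_{j+1}t$, i.e. \emph{between} consecutive solitons.

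Next, decompose $\varphi=\tilde R+V$ with $V=(v_1,v_2)^T$ and $\tilde R=(\tilde R_1,\tilde R_2)^T$, and split each integrand into soliton terms, linear cross terms, and quadratic (or higher) terms in $V$. For the soliton-only contributions, the key observation is that at a point $x=m_k t+O(L)$, which lies between solitons $k-1$ and $k$, every $Q_{\beta_\ell}(\cdot-\tilde y_\ell)$ is exponentially small: by the definition of $\gamma_0$ in \eqref{ga0} and using \eqref{hyp} to control $\tilde y_\ell-\beta_\ell t$, one has $|Q_{\beta_\ell}(x-\tilde y_\ell)|+|(Q_{\beta_\ell})_x|+\ldots\le C e^{-2\gamma_0 t}$ on the support of $\phi'$. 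Hence the pure soliton contributions in both integrals are $O(\frac{1}{L}e^{-4\gamma_0 t})$. For the quadratic-in-$V$ contributions, the bootstrap hypothesis \eqref{hyp} gives $\|V(t)\|^2\le e^{-2\gamma_0 t}$, so they are bounded by $\frac{C}{L}e^{-2\gamma_0 t}$. The cross terms (linear in $V$, linear in $\tilde R$) are handled by Cauchy--Schwarz: the $L^2$-norm of $\tilde R$ (restricted to $\mathrm{supp}(\phi')$) is $O(e^{-2\gamma_0 t})$, while $\|V\|\le e^{-\gamma_0 t}$, giving $O(\frac{1}{L} e^{-3\gamma_0 t})$.

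The nonlinear piece $F(u)=F(\tilde R_1+v_1)$ is the most delicate. Splitting $F(u)=F(\tilde R_1)+f(\tilde R_1)v_1+O(|\tilde R_1|^{p-1}v_1^2+|v_1|^p)$ and exploiting assumption (A) or (B) for polynomial/subcritical control (together with Sobolev embedding $H^1\hookrightarrow L^{p+1}$ in low dimensions; in dimensions $d\ge 2$ the pure-power form makes this transparent), each piece is treated as above: the soliton factor is exponentially small on $\mathrm{supp}(\partial_x\phi_j)$, and the $V$-factors are controlled by the bootstrap. The main technical obstacle is precisely to trace these exponential decay rates carefully through the nonlinear term and to verify that the implicit constant in $C/L$ can be taken independent of both $L$ and $t\in[T^*,S_0]$. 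Combining all pieces yields $|\frac{d}{dt}P_j[\varphi](t)|\le \frac{C}{L}e^{-2\gamma_0 t}$, and integration on $[t,S_0]$ gives \eqref{DPj}.
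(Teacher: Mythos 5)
Your proposal is correct and follows essentially the same route as the paper: compute $\frac{d}{dt}P_j[\varphi]$ using \eqref{NLKG} and integration by parts, exploit the factor $1/L$ carried by $(\phi_j)_x$ and $(\phi_j)_t$ together with the fact that $\tilde R$ is exponentially small on the support of $\phi'$ (which sits between consecutive solitons), control the quadratic-in-$V$ contributions by the bootstrap bound $\|V(t)\|^2 \le e^{-2\gamma_0 t}$, and integrate in $t$. The paper lumps the soliton-only and cross terms into a single $O(e^{-3\gamma_0 t}/L)$ bound while you track the exponents $e^{-4\gamma_0 t}$ and $e^{-3\gamma_0 t}$ separately, but both give subleading contributions and the dominant term is the same.
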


\begin{proof}
A simple computation using (\ref{NLKG}) shows that 
\begin{align}
\partial_t P_j[\varphi](t)  & = -\frac 14 \int u_t^2 (\phi_j)_x -\frac 14 \int u_x^2 (\phi_j)_x  +\frac 14 \int u^2 (\phi_j)_x \nonu\\
&  - \frac12\int F(u)(\phi_j)_x + \frac 12  \int u_t u_x (\phi_j)_t.
\end{align}
Indeed, one has
\begin{align*}
\partial_t P_j[\varphi](t) & = \frac12 \int u_t  u_x   (\phi_j)_t +\frac12 \int u_t  u_{tx} \phi_j +\frac12 \int u_{tt} u_x  \phi_j\\
& =   \frac12 \int u_t  u_x   (\phi_j)_{t} +\frac14 \int (u_t^2)_x   \phi_j +\frac12 \int (u_{xx} -u + f(u)) u_x  \phi_j\\
& =   \frac12  \int u_t  u_x   (\phi_j)_{t}  - \frac14 \int u_t^2 (\phi_j)_x -\frac 14 \int (u_x^2 -u^2 +2F(u) ) (\phi_j)_x,
\end{align*}
as desired. Now, from the decomposition \eqref{desco} we replace above to obtain (compare with \eqref{cle})
\begin{multline*}
|\partial_t P_j[\varphi](t)| \\
\le C \left( \frac{e^{-3\ga_0 t}}{L} + \int v_2^2 (\phi_j)_x +  \int (v_1)_x^2 (\phi_j)_x  + \int v_1^2 (\phi_j)_x + \int F(v_1)(\phi_j)_x \right) .
\end{multline*}
From the smallness condition of $v$, we get finally
\[ 
|\partial_t P_j[\varphi](t)|  \le \frac CL e^{-2\ga_0 t}, 
\]
as desired. The conclusion follows after integration in time.
\end{proof}

The previous Lemma and the energy conservation law imply the following

\begin{cor}
There exists $C>0$ independent of time and $L>0$ such that, for all $t\in [T^*,S_0]$,
\be\label{monot}
|\mathcal F[\varphi](t) - \mathcal F[\varphi](S_0)| \le  \frac CL e^{-2\ga_0 t}.
\ee
\end{cor}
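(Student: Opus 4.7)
The plan is to reduce the estimate to the previous lemma via the energy conservation law. From the very definition \eqref{WF} of the modified Lyapunov functional,
\[
\mathcal F[\varphi](t) - \mathcal F[\varphi](S_0) = \bigl( E[\varphi](t) - E[\varphi](S_0) \bigr) + 2 \sum_{j=1}^N \beta_j \bigl( P_j[\varphi](t) - P_j[\varphi](S_0) \bigr).
\]
Since $\varphi$ is a solution of \eqref{NLKG} on $[T^*,S_0]$ and the energy \eqref{E} is preserved along the flow, the first term vanishes identically. Hence only the momentum contribution remains, and the identity reduces to
\[
\mathcal F[\varphi](t) - \mathcal F[\varphi](S_0) = 2 \sum_{j=1}^N \beta_j \bigl( P_j[\varphi](t) - P_j[\varphi](S_0) \bigr).
\]

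For each $j \in \{1, \dots, N\}$, the previous lemma \eqref{DPj} controls the variation of the $j$-th localized momentum by $\frac{C}{L} e^{-2\gamma_0 t}$. Since $|\beta_j| < 1$ for every $j$, the triangle inequality then gives
\[
\abs{\mathcal F[\varphi](t) - \mathcal F[\varphi](S_0)} \le 2 \sum_{j=1}^N |\beta_j| \cdot \frac{C}{L} e^{-2\gamma_0 t} \le \frac{2 N C}{L} e^{-2\gamma_0 t},
\]
which, after renaming the constant, is the announced bound \eqref{monot}. No step here is delicate: the whole content is already encoded in \eqref{DPj} and in the conservation of $E$, so there is no real obstacle beyond keeping track of the constant.
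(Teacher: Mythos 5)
Your proof is correct and is exactly the argument the paper intends: the paper introduces this corollary with the remark ``The previous Lemma and the energy conservation law imply the following,'' which is precisely the reduction you carry out. Nothing further to add.
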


Now we use the coercivity associated to $H_j$. A standard localization argument (see e.g. \cite{MMT}), Proposition \ref{prop:22}  and (\ref{def:a}) give
\[ 
 \sum_{j=1}^N \langle  H_j V | V \rangle \geq \nu_0 \| V(t)\|^2 - \frac 1{\nu_0} ( \| \boldsymbol a_+ \|_{\ell^2}^2 +\| \boldsymbol a_- \|_{\ell^2}^2 ), 
\]
for an independent constant $\nu_0>0$. From this coercivity estimate, using (\ref{monot}) and \eqref{Expa}, the initial bound  \eqref{V0}, and bounding the terms in $\boldsymbol a_\pm$ by (\ref{hyp2}), we get that for some $C>0$
\[
\forall t \in [T^*,S_0], \quad \| V(t)\| \le \frac{C}{\sqrt L} e^{-\ga_0 t} + Ce^{-3/2\ga_0 t}.
\]
Therefore, for $L \ge 4C^2$, we improve the first condition in (\ref{hyp}), to get (\ref{fin1}). We can now integrate of the modulation equation \eqref{xjxj} for $\tilde x_j'(t)$ we get the second estimates in (\ref{fin1}) (by increasing $L$ is necessary).

\smallskip

Now, using (\ref{aprime})-(\ref{a0prime}) and integrating in time, we improve in a similar way the conditions in (\ref{hyp2}), to obtain (\ref{fin2}).  In conclusion, \eqref{last1} must be satisfied.

\bigskip

\noindent {\bf Step 4. Transversality.} For notation, let $\q N(\boldsymbol{\gh a}_+, t) := e^{3\gamma_0 t} \| \boldsymbol a_+(t) \|_{\ell^2}^2$.
Using the expansion \eqref{aprime}, we compute 
\begin{align*}
%\MoveEqLeft 
\frac{d}{dt} \q N(\boldsymbol{\gh a}_+, t) &= \sum_{j=1}^N e^{3\gamma_0 t} a_{+,j}'(t) a_{+,j}(t) + 3 \gamma_0 \q N(\gh a^+, t) \\
& = -e^{3 \gamma_0 t} \sqrt{\lambda_0} \sum_{j=1}^N \frac{ |a_{+,j}(t)|^2}{\gamma_j} + O( e^{3\gamma_0 t} (\| V(t) \|_{L^2}^2 + e^{-2 \gamma_0 t}) \| \boldsymbol a_+(t) \|_{\ell^2}) \\
& \quad + 3 \gamma_0 \q N(\boldsymbol {\gh a}^+, t) \\
& \le -(2c_0 - 3 \gamma_0) \q N(\boldsymbol {\gh a}^+, t) - O(e^{3\gamma_0 t}  (\| V(t) \|_{L^2}^2 + e^{-2 \gamma_0 t}) \| \boldsymbol a_+(t) \|_{\ell^2}),
\end{align*}
where $c_0 = \frac 12\sqrt{\lambda_0} \min_i\{1/\gamma_i\} >0$.  Note that  from \eqref{ga0} we have $2c_0 - 3 \gamma_0> \gamma_0>0$.
Now, at time $T^* = T^*(\boldsymbol {\gh a}^+)$, $\| V(T^*) \|_{L^2} = O(e^{-\gamma_0 T^*})$, whereas $\| \boldsymbol a_+(T^*) \|_{\ell^2} = e^{-3\gamma_0 T^*/2}$, i.e. $\q N(\boldsymbol{\gh a}^+, T^*)=1$ , hence
\[ 
\left. \frac{d}{dt} \q N(\boldsymbol {\gh a}^+, t) \right|_{t =T^*(\boldsymbol {\gh a}^+)} \le -(2c_0- 3 \gamma_0) + O( e^{-\gamma_0 T^*/2}).
\]
Choosing $T_0$ larger if necessary, and as $T^* \ge T_0$ for all $\boldsymbol{\gh a}_+$, we get
\begin{equation} \label{eq:a_out}  
\left. \frac{d}{dt} \q N(\boldsymbol {\gh a}_+, t) \right|_{t =T^*(\boldsymbol {\gh a}_+)}  \le -  \frac12 \gamma_0 <0.
\end{equation}
This concludes the proof of Proposition \ref{prop:flow}.
\end{proof}

We end this paragraph with the proof of Corollary 
\ref{cor:flow}.

\begin{proof}[Proof of Corollary \ref{cor:flow}]
Let us now show that $\Omega$ is open and that the mapping $\boldsymbol{\gh a}_+ \mapsto T^*(\boldsymbol{\gh a}_+)$ is continuous. Let $\boldsymbol{\gh a}_+ \in \Omega$. We recall that  $\q N(\boldsymbol{\gh a}_+, t) = e^{3\gamma_0 t} \| \boldsymbol a_+(t) \|_{\ell^2}^2$.
By \eqref{eq:a_out}, for all $\e>0$ small, there exists $\delta>0$ such that 
\begin{itemize}
\item $\q N (\boldsymbol{\gh a}_+, T^*(\boldsymbol{\gh a}_+) - \e)  > 1+\delta$, and

\smallskip

\item for all $t \in [T^*(\boldsymbol{\gh a}_+)+\e,S_0]$ (possibly empty), $\q N(\boldsymbol{\gh a}_+, t) < 1-\delta$.
\end{itemize} 
By continuity of the flow of the \eqref{NLKG} equation, it follows that there exists $\eta >0$ such that the following holds. For all $\boldsymbol{\tilde {\gh a}}_+ \in B_{\m R^N}(0,1)$ such that $\| \boldsymbol{\tilde {\gh a}}_+ - \boldsymbol{\gh a}_+ \|\le  \eta$, 
then $|\q N(\boldsymbol{\tilde {\gh a}}_+,t) - \mathcal{N}(\boldsymbol{\gh a}_+,t)| \le  \delta/2$ for all $t \in [T^*(\boldsymbol{\gh a}_+)-\e,S_0]$. In particular, $\tilde {\gh a}_+ \in \Omega$ and
\[
T^*(\boldsymbol{\gh a}_+) - \e \le T^*(\boldsymbol{\tilde {\gh a}}_+) \le T^*(\boldsymbol{\gh a}_+) + \e.
\]
 This exactly means that $\Omega$ contains a neighbourhood of ${\gh a}_+$, hence is open, and that $\boldsymbol{\gh a}_+ \mapsto T^*(\boldsymbol{\gh a}_+)$ continuous.

\medskip

Finally, let us show that the exit is instantaneous on the sphere. If $\| \boldsymbol{\gh a}_+ \|_{\ell^2} =1$, then $\q N(\boldsymbol{\gh a}_+,S_0) =1$, hence by \eqref{eq:a_out}, $\q N(\boldsymbol{\gh a}_+,t) > 1$ for all $t <S_0$ in a neighborhood of $S_0$. This means that $T^*(\boldsymbol{\gh a}_+) = S_0$.
\end{proof}

\subsection{Extension to higher dimension}

The main part of the proof remains unchanged. One has to work only for the definition of the Lyapunov functional. The key point is to notice that one can find a suitable direction as in \cite{Martel1}. The set
\[ 
\q M = \big\{ \beta \in \m R^d \mid \forall j,\ \beta \cdot \beta_j =0 \big\}, 
\]
is of zero measure: let $\bar \beta \notin \q M$; up to rescaling, we can assume $|\bar \beta| =1$. Without loss of generality we can assume that the indexes $j$ satisfy
\[
-1< (\bar \beta \cdot  \beta_1 )< (\bar \beta \cdot  \beta_2) < \cdots < (\bar \beta \cdot  \beta_N) <1.
\]
We use again the $1$d cut-off function $\phi$ defined at Step 3 of the previous
to define the new cut-off functions
\[ \psi_j (x) = \phi \left( \frac{\bar \beta \cdot x - m_j t}{L} \right)-
\phi \left( \frac{\bar \beta \cdot x - m_{j+1} t}{L} \right), \quad \text{where} \quad m_j = \frac 1 2 ( \beta_j + \beta_{j-1}) \cdot \bar \beta. \]
Then all the computations of Step 3 of Section \ref{4} follow unchanged. We refer to \cite{Martel1} (Claim 1 and what follows) for further details.

\section{Proof of Theorem \ref{MT1}} \label{sec:4}

The proof of Theorem \ref{MT1} follows from Proposition \ref{prop:app_Nsol} in a standard fashion, see e.g. \cite{Martel1}. The main point is continuity of the flow for the weak $H^1 \times L^2$ topology. %More precisely:

\begin{lem}
The \eqref{NLKG} flow is continuous for the weak $H^1 \times L^2$ topology. More precisely, let $U_n \in \q C([0,T], H^1 \times L^2)$ be a sequence of solutions to \eqref{NLKG}, and assume that for some $M >0$,
\[ U_n(0) \tendf U^* \quad \text{in } H^1 \times L^2-weak, \text{ and} \quad \forall n, \quad  \| U_n(t) \|_{\q C([0,T],H^1 \times L^2)} \le M. \]
Define $U \in \q C([0,T^+(U)), H^1 \times L^2)$ be the solution to \eqref{NLKG} with initial data $U(0) = U^*$. Then $T^+(U) > T$ and
\[ \forall t \in [0,T], \quad U_n(t) \tendf U(t) \quad \text{in } H^1 \times L^2-weak. \]
\end{lem}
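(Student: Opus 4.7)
The strategy is standard in this setting (see e.g. \cite{Martel1}): extract weak-$\star$ limits, upgrade to strong local convergence via Rellich compactness, pass to the limit in the equation, and identify the limit by uniqueness in the energy space.

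First, fix an arbitrary subsequence, still denoted $U_n$. By the uniform bound $\|U_n\|_{\q C([0,T], H^1\times L^2)}\le M$ and Banach--Alaoglu, together with a diagonal extraction over a countable dense subset of $[0,T]$, one extracts a sub-subsequence (still denoted $U_n$) and a limit $V\in L^\infty([0,T], H^1\times L^2)$ such that $U_n(t)\tendf V(t)$ weakly in $H^1\times L^2$ for every $t$ in that dense set. Writing $\varphi_n = (u_n,\partial_t u_n)^T$, the equation gives $\partial_t \varphi_n = A\varphi_n + (0,f(u_n))^T$ with $A$ bounded from $H^1\times L^2$ to $L^2\times H^{-1}$, and the uniform $H^1\times L^2$ bound on $\varphi_n$ plus the growth assumptions on $f$ (cases (A)(B) or (iii)(iv)) give a uniform bound on $\partial_t \varphi_n$ in $L^\infty([0,T], L^2\times H^{-1})$. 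This equicontinuity upgrades the weak convergence to every $t\in[0,T]$, so $\varphi_n(t)\tendf V(t)$ in $H^1\times L^2$ for each $t$, and in particular $V(0)=U^*$.

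Next, I upgrade to strong convergence needed to handle the nonlinearity. Fix any compact $K\subset\m R^d$. By Rellich's theorem, $u_n(t)\to v_1(t)$ strongly in $L^q(K)$ for every $q\in[2, 2d/(d-2))$ (or every $q<\infty$ if $d\le 2$), pointwise in $t$. Combined with the uniform $H^1$ bound, the $H^1$-subcritical growth of $f$ (assumption (B) or (iv), and in $d=2$ the exponential bound (iii) using a Moser--Trudinger argument) gives $f(u_n(t))\to f(v_1(t))$ in $L^2(K)$. By dominated convergence in time and a further diagonal extraction over an exhaustion $K_m\nearrow \m R^d$, one passes to the limit in the Duhamel formulation
\[
\varphi_n(t) = e^{tA}\varphi_n(0) + \int_0^t e^{(t-s)A}\begin{pmatrix}0\\ f(u_n(s))\end{pmatrix}\, ds,
\]
tested against any $H^1\times L^2$ function with compact support. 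The limit $V$ is therefore a weak $H^1\times L^2$ solution of \eqref{NLKG} with initial data $U^*$, and belongs to $L^\infty([0,T],H^1\times L^2)$.

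Finally, by the local well-posedness theory of Ginibre--Velo \cite{GV85} (and Nakamura--Ozawa \cite{NO01} in $d=2$), the solution $U$ with initial data $U^*$ is unique in $\q C([0,T^+(U)), H^1\times L^2)$, so $V$ coincides with $U$ on $[0,\min(T,T^+(U)))$. Since $V$ is uniformly bounded in the energy space on $[0,T]$, the blow-up criterion forces $T^+(U)>T$ and $V=U$ on $[0,T]$. Because the limit $U$ does not depend on the extracted subsequence, the whole sequence satisfies $U_n(t)\tendf U(t)$ for every $t\in[0,T]$. The main technical point is the pass-to-the-limit in $f(u_n)$: this is precisely where $H^1$-subcriticality of $f$ (or the exponential growth bound in $d=2$) is used, through Rellich compactness and dominated convergence.
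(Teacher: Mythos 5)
Your argument is a classical Aubin--Lions-type compactness proof, and it is a genuinely different route from the one taken in the paper. The paper invokes Tao's low-regularity local well-posedness in $H^s \times \dot H^{s-1}$ for some $s<1$: on any ball $B(0,R+t)$ the weak $H^1\times L^2$ convergence of the data becomes \emph{strong} $H^s\times \dot H^{s-1}$ convergence by Rellich, and then finite speed of propagation plus the $H^s$ continuity-of-flow theorem gives strong $H^s\times \dot H^{s-1}$ convergence of $U_n(t)$ to $U(t)$ on $B(0,R)$, from which weak $H^1\times L^2$ convergence follows by testing against $H^{2-s}\times\dot H^{1-s}$ functions. This never touches the nonlinearity directly, never extracts subsequences, and never needs to identify a weak limit as the strong solution --- all the hard analysis is packaged in the subcritical $H^s$ well-posedness, and the argument gives the full sequence at once. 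Your approach instead extracts limits via Banach--Alaoglu, controls $\partial_t\varphi_n$ to get equicontinuity, passes to the limit in $f(u_n)$ via Rellich on compacts, and then identifies the limit with the solution by uniqueness.

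The gap in your version is precisely the identification step and its prerequisites. First, to pass $f(u_n(t))\to f(v_1(t))$ in $L^2(K)$ you implicitly use $u_n(t)\to v_1(t)$ in $L^{2p}(K)$, which by Rellich requires $2p < 2d/(d-2)$, i.e.\ $p<d/(d-2)$ --- this does \emph{not} cover the full $H^1$-subcritical range $p<(d+2)/(d-2)$ for $d\ge 3$; near the critical exponent you would need Strichartz-type space-time bounds on $u_n$, not just the uniform energy bound. Second, and more importantly, you conclude $V=U$ by invoking ``uniqueness in the energy space,'' but what you have constructed is an $L^\infty([0,T],H^1\times L^2)$ distributional solution, not a solution in the $\mathcal C_t(H^1\times L^2)$ (or Strichartz) class in which Ginibre--Velo's uniqueness holds. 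Upgrading a bounded-in-energy weak solution to the strong one is a weak-strong uniqueness statement that must be argued (and again becomes delicate near the critical exponent). The paper's argument sidesteps both difficulties, which is why it is the preferred route; your sketch would need these two points filled in to be complete.
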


\begin{proof}
This is a simple consequence of the local well posedness of \eqref{NLKG} in $H^s \times \dot H^{s-1}$ for some $s <1$. More precisely, we have

\begin{thm*}[Local wellposedness]
There exists $0 \le s_{f,d} <1$ such that for all $s \ge s_{f,g}$, the following holds. Given any data $U_0 = (u_0,u_1) \in H^s \times \dot H^{s-1}$, there exist a unique solution $U \in \q C([0,T^+(U)), H^s \times \dot H^{s-1})$ to \eqref{NLKG} such that $U(0)=U_0$. Furthermore, 
\begin{enumerate}
\item  The maximal time of existence $T^+(U)$ is the same in all $H^{\sigma} \times \dot H^{\sigma-1}$ for $\sigma \in [s_{f,d},s]$. If finite, it is characterized by 
\[ \lim_{t \to T^+(U)} \| U(t) \|_{H^s \times \dot H^{s-1}} = +\infty. \]
\item The flow is continuous, in the sense that if $U_n$ is a sequence of solution to \eqref{NLKG} such that $U_n(0) \to U(0)$ in $H^s \times \dot H^{s-1}$, then  $T^+(U) \ge \liminf_n T^+(U_n)$ and
\[ \forall t \in [0,T^+(U)), \quad \| U_n - U \|_{\q C([0,t],H^s \times \dot H^{s-1})} \to 0 \quad \text{as} \quad n \to +\infty. \]
\end{enumerate}
\end{thm*}
We refer to \cite[Theorem 1.2]{Tao99} and the Remark following it for a proof and the precise value of $s_{f,d}$ (which is not important for us). 

\medskip 

Fix $0 < s<1$ be such that the Theorem holds. Let $t \in [0,T]$ such that $t < T^+(U)$. 

Let $V \in (\q D(\m R^d))^2$ and $R>0$ such that $\mathrm{Supp}\ V \subset B_{\m R^d}(0,R)$.

As $U_n(0) \tendf U(0)$ weakly in $H^1 \times L^2(\m R^d)$,  there holds by Sobolev compact embedding
\[ \| U_n(0) - U(0) \|_{H^s \times \dot H^{s-1}(B_{\m R^d}(0,R+t))} \to 0. \]
It follows by finite speed of propagation and the continuity of the flow in the local well-posedness Theorem that
\[ \| U_n(t) - U(t) \|_{H^s \times \dot H^{s-1}(B_{\m R^d}(0,R))} \to 0. \]
Hence denoting $U_n = (u_n, \partial_t u_{n})$ and $V =(v_0,v_1)$,
\begin{align*}
%\MoveEqLeft
& | \langle U_n(t) - U(t), V \rangle | \\
& \quad = \left| \int_{|x| \le R} \left( (\partial_t u_{n}(t,x) -  \partial_t u_{n}(t,x) ) v_1(x) \vphantom{\int} \right. \right. \\
& \qquad \left. \left. \vphantom{\int} + \nabla (u_n-u) \cdot \nabla v_0 + (u_n(t,x)-u(t,x)) v_0(x) \right) dx \right| \\ 
& \quad  \le  \| U_n(t) - U(t) \|_{H^s \times \dot H^{s-1}(B_{\m R^d}(0,R))} \| V \|_{H^{2-s} \times \dot H^{1-s}} \to 0.
\end{align*}
Therefore $U_n(t) \tendf U(t)$ in $\q D'$, and by the $H^1 \times L^2$ bound, $U_n(t) \tendf U(t)$ weakly in $H^1 \times L^2$.

In particular $\| U(t) \|_{H^1 \times L^2} \le \liminf_{n \to \infty} \| U_n(t) \| \le M$. From there, a continuity argument shows that $T^+(U) > T$.
\end{proof}

We can now prove Theorem \ref{MT1}. Let $(S_n)_{n\geq 1} \subset \R$ be a sequence that satisfies $S_n > S_0$, $S_n $ increasing and $S_n \to +\infty$. 
From Proposition \ref{prop:app_Nsol} there exists a sequence of final data functions $U_{0,n} \in H^1 \times L^2$ such that
\be\label{suite}
\forall t \in [T_0, S_n],\quad  U_n (t):= \varphi(S_n,t,U_{0,n}) \in \mathcal V(t).
\ee
(We recall that $\varphi$ denotes the flow and is defined in \eqref{phi}). Note that $T_0$ does not depend on $S_n$, and observe that there exists $M$ independent of $n$ such that
\be \label{UnBound}
\forall t \in  [T_0, S_n], \quad \| U_n(t) - R(t) \|_{H^1 \times L^2} \le M e^{- \gamma_0 t}.
\ee
Let $U^*_0$ be a weak limit in $H^1 \times L^2$ of the bounded sequence $U_n(T_0)$, and define
\[ U^*(t) =\varphi (t, T_0, U_0^*). \]
Fix $t \ge T_0$. Then the previous Lemma applies on $[T_0,t]$ and shows that $T^+(U^*) > t$ and
$U_n(t) \tendf U^*(t)$ weakly in $H^1 \times L^2$. Hence \eqref{UnBound} yields
\[ \| U^*(t) - R(t) \|_{H^1 \times L^2} \le \liminf_n  \| U_n(t) - R(t) \|_{H^1 \times L^2} \le M e^{- \gamma_0 t}. \]
Therefore, $T^+(U^*) = +\infty$ and $U^*$ is the desired multi-soliton.

\bigskip

\appendix

\section{The orbit of \texorpdfstring{$Q$}{Q} under general Lorentz transformations}\label{Ap}

\medskip

In this appendix we prove that the orbit of $Q$ under the group generated by space and time translations, and general Lorentz transforms is
\[ \q F := \{ (t,x) \mapsto Q_\beta(t,x-x_0) \mid \beta, x_0 \in \m R^d, \ |\beta| <1 \}. \]
We recall that we consider $Q$ as a function of time with the slight abuse of notation $Q(t,x) =Q(x)$.

The map $\beta \mapsto \Lambda_\beta$ (see \eqref{Lor}) is a group homomorphism from $(B_{\R^d}(0,1), \oplus )$ to $(M_{d+1}(\m R), \circ)$, where $\oplus$ denotes Einstein's velocity addition
\[ x \oplus y = \frac{1}{1 + x \cdot y} \left( y + \frac{x \cdot y}{|y|^2} y + \sqrt{1-|y|^2} \left( x - \frac{x \cdot y}{|y|^2} y \right) \right). \]  
%(\ga (t-\beta x),\ga(x- \bt t)),  \quad \ga :=(1-|\bt|^2)^{-1/2},
In particular, $\Lambda_{-\beta} \Lambda_\beta = \Id_{d+1}$.

\medskip

A general Lorentz transform is an element of $O(1,d) \simeq \m R^d \rtimes O(d)$, hence can be written in the form
\[ \Lambda_{U,\beta} := \begin{pmatrix}
1 &  0 \ \cdots \ 0 \\
\ \begin{matrix}
0 \\
\vdots \\
0 \end{matrix}
& U 
\end{pmatrix} \Lambda_\beta, \quad \text{where} \quad  U \in SO(d), \text{ i.e }  UU^T = \Id_d. \]

As $Q(x)$ is radially symmetric, it follows that 
\[ Q \left( \Lambda_{U,\beta} \begin{pmatrix} t \\ x \end{pmatrix} \right) = Q \left( \Lambda_\beta \begin{pmatrix} t \\ x \end{pmatrix} \right) = Q_\beta (x), \]
hence the orbit of $\{ Q \}$ under general Lorentz transform is simply  $\{ Q_\beta | \beta \in \m R^d \}$. We now want to parametrize the other invariances of \eqref{NLKG}, that is time and space shifts. Fortunately, the former reduce to the latter. Indeed, notice that
\[ \beta \beta^T \sim \begin{pmatrix}
|\beta|^2 & 0 \ \cdots \ 0 \\
\; \begin{matrix}
0 \\
\vdots \\
0 \end{matrix}
& 0
\end{pmatrix}, \quad \text{so that} \quad \Id_d + \frac{\gamma-1}{|\beta|^2} \beta \beta^T \sim 
\begin{pmatrix}
\gamma &  0 \  \cdots \ 0 \\
\ \begin{matrix}
0 \\
\vdots \\
0 \end{matrix}
& \Id_{d-1}
\end{pmatrix}.
 \]
(Here $\sim$ indicates similarity of matrices). In particular, $ \Id_d + \frac{\gamma-1}{|\beta|^2} \beta \beta^T $ is invertible. Then time translations for $Q_\beta$ can be rethought as an adequate space shift:
\begin{align*}
 Q_\beta(t+t_0,x) & = Q \left( \Lambda_\beta \begin{pmatrix} t+t_0 \\ x \end{pmatrix} \right) = Q\left( \Lambda_\beta \begin{pmatrix} t \\ x \end{pmatrix} + t_0 \begin{pmatrix} \gamma \\ -\beta \end{pmatrix} \right) \\
& = Q\left( \Lambda_\beta \begin{pmatrix} t \\ x \end{pmatrix} - t_0 \begin{pmatrix} 0  \\ \beta \end{pmatrix} \right)  = Q_\beta  \begin{pmatrix}
t \\
x - t_0 (\Id_d + \frac{\gamma-1}{\beta|^2} \beta \beta^T)^{-1}(\beta)
\end{pmatrix} . 
\end{align*}
It follows that $\q F$ is stable through all general Lorentz transform, time and space shifts, hence it is the orbit of $Q$ through the group generated by these transformations.

\bigskip

\bigskip
\bigskip
\bigskip
\bigskip
\bigskip
\bigskip

\centerline{\scshape Rapha\"el C\^{o}te}
\medskip
{\footnotesize
\begin{center}
CNRS and École polytechnique \\
Centre de Mathématiques Laurent Schwartz UMR 7640 \\
Route de Palaiseau, 91128 Palaiseau cedex, France \\
\email{cote@math.polytechnique.fr}
\end{center}
} 

\bigskip

\medskip

\centerline{\scshape Claudio Mu\~noz}
\medskip
{\footnotesize
% please put the address of the first author
 \centerline{Department of Mathematics, University of Chicago}
\centerline{5734 South University Avenue, Chicago, IL 60615, U.S.A.}
\centerline{\email{cmunoz@math.uchicago.edu}}
} % Do not forget to end the {\footnotesize by the sign }

\end{document}